\numberwithin{equation}{section}
\theoremstyle{plain}
\newtheorem{theorem}[equation]{Theorem}
\newtheorem{corollary}[equation]{Corollary}
\newtheorem{lemma}[equation]{Lemma}
\newtheorem{proposition}[equation]{Proposition}
\theoremstyle{definition}
\newtheorem{definition}[equation]{Definition}
\newtheorem{example}[equation]{Example}
\newtheorem{remark}[equation]{Remark}
\newtheorem*{openproblem}{Open Problem}
\numberwithin{equation}{section}
\newcommand{\R}{{\mathbb R}}
\newcommand{\N}{{\mathbb N}}
\newcommand{\Om}{\Omega}
\providecommand{\vint}[1]{\mathchoice
          {\mathop{\vrule width 5pt height 3 pt depth -2.5pt
                  \kern -9pt \kern 1pt\intop}\nolimits_{\kern -5pt{#1}}}
          {\mathop{\vrule width 5pt height 3 pt depth -2.6pt
                  \kern -6pt \intop}\nolimits_{\kern -3pt{#1}}}
          {\mathop{\vrule width 5pt height 3 pt depth -2.6pt
                  \kern -6pt \intop}\nolimits_{\kern -3pt{#1}}}
          {\mathop{\vrule width 5pt height 3 pt depth -2.6pt
                  \kern -6pt \intop}\nolimits_{\kern -3pt{#1}}}}
\newcommand{\eps}{\varepsilon}
\newcommand{\loc}{\mathrm{loc}}
\newcommand{\BV}{\mathrm{BV}}
\newcommand{\liploc}{\mathrm{Lip}_{\mathrm{loc}}}
\newcommand{\ch}{\text{\raise 1.3pt \hbox{$\chi$}\kern-0.2pt}}
\DeclareMathOperator{\capa}{Cap}
\DeclareMathOperator{\rcapa}{cap}
\DeclareMathOperator{\dist}{dist}
\DeclareMathOperator{\Lip}{Lip}
\DeclareMathOperator{\supp}{spt}
\DeclareMathOperator{\inte}{int}
\begin{document}
\title{The variational $1$-capacity and\\
BV functions with zero boundary values\\
on metric spaces
\footnote{{\bf 2010 Mathematics Subject Classification}: 30L99, 31E05, 26B30.
\hfill \break {\it Keywords\,}: metric measure space, bounded variation, zero boundary values,
variational capacity, outer capacity, quasi-semicontinuity
}}
\author{Panu Lahti}
\maketitle

\begin{abstract}
In the setting of a metric space that is equipped with a doubling measure and supports a Poincar\'e inequality, we define and study a class of $\BV$ functions with zero boundary values.
In particular, we show that the class is the closure of compactly supported $\BV$
functions in the $\BV$ norm.
Utilizing this theory, we then study the variational $1$-capacity
and its Lipschitz and $\BV$ analogs. We show that each of these is an outer capacity, and that
the different capacities are equal for certain sets.
\end{abstract}

\section{Introduction}

Spaces of Sobolev functions with zero boundary values are essential in specifying boundary values
in various Dirichlet problems.
This is true also in the setting of a metric measure
space $(X,d,\mu)$, where $\mu$ is a doubling Radon measure
and the space supports a
Poincar\'e inequality; see Section \ref{sec:definitions} for definitions and notation.
In this setting, given an open
set $\Om\subset X$, the space of \emph{Newton-Sobolev functions}
with zero boundary values is defined
for $1\le p<\infty$ by
\[
N_0^{1,p}(\Om):=\{u|_{\Om}:\,u\in N^{1,p}(X)\textrm{ with }u=0\textrm { on }X\setminus\Om\}.
\]
Dirichlet problems for minimizers of the $p$-energy, and Newton-Sobolev functions with zero boundary values have been studied in the metric setting in \cite{BB-OD,BBS2,BBS3,S-H}.

In the case $p=1$, instead of the $p$-energy it is natural to minimize the total variation of a
function.
Local minimizers of the total variation are called functions of least gradient,
see e.g. \cite{BDG,HKLS,MRL,SWZ,ZZ}.
To study these, or alternatively solutions to Dirichlet problems that minimize the total
variation globally, we need a class of functions of bounded variation ($\BV$ functions)
with zero boundary values.
Such a notion has been considered in the Euclidean setting in e.g.
\cite{BS} and in the metric setting in \cite{HKL,KLLS,LaSh2}.
However, unlike in the case $p>1$, for $\BV$ functions there seem to be several natural ways to define the notion of zero boundary values, depending for example on whether one considers
local or global minimizers.
In this paper we define the class $\BV_0(\Om)$ in a way that mimics the definition of the classes
$N_0^{1,p}(\Om)$ as closely as possible; we expect such a definition to be useful when
extending results of fine potential theory from the case $p>1$ to the case $p=1$,
see Remark \ref{rem:definitions of zero boundary values}.
Then we show that various properties that are known to hold for $N_0^{1,p}(\Om)$ hold also for
$\BV_0(\Om)$.

Classically, the space of Sobolev functions with zero boundary values is usually defined as
the closure of $C_0^{\infty}(\Om)$ in the Sobolev norm.
In the metric setting, it can be shown that the space $N_0^{1,p}(\Om)$ is the closure of
the space of Lipschitz functions with compact support in $\Om$,
see \cite[Theorem 4.8]{S-H} or \cite[Theorem 5.46]{BB}. In this
paper we show that
the class $\BV_0(\Om)$ is, analogously, the closure of $\BV$ functions
with compact support in $\Om$.
This is Theorem \ref{thm:characterization of BV function with zero bdry values}.

Newton-Sobolev classes with zero boundary values are needed in defining the variational capacity
$\rcapa_p(A,\Om)$, which is an essential concept in nonlinear potential theory, see e.g.
the monographs
\cite{HKM,MZ} for the Euclidean case and \cite{BB} for the metric setting.
The properties of the variational capacity $\rcapa_p(A,D)$, also for
nonopen $D$, have been studied systematically in the metric setting in \cite{BB-cap}.
In this paper, we extend some of these results from the case $1<p<\infty$ to the case $p=1$.
In particular, in Theorem \ref{thm:1-capacity is outer} we show that the variational $1$-capacity
$\rcapa_1$ is an \emph{outer capacity}.

Moreover, the $\BV$ analog of the variational $1$-capacity, denoted by $\rcapa_{\BV}$, has been studied in the metric setting in \cite{HaSh,KKST-DG}.
Again, there are several different possible definitions available,
depending on the definition of the class of $\BV$ functions with zero boundary values,
and usually the $\BV$-capacity is defined
in a way that automatically makes it an outer capacity. In this paper we instead give a definition
that is closely analogous to the definition of $\rcapa_1$. Then we show, in Theorem
\ref{thm:BV capacity is outer}, that $\rcapa_{\BV}$ is in fact an outer capacity. Moreover, we show that
when $K$ is a compact subset of an open set $\Om$, $\rcapa_{\BV}(K,\Om)$ is equal to the Lipschitz
version of the $1$-capacity $\rcapa_{\mathrm{lip}}(K,\Om)$. This is
Theorem \ref{thm:BV and lip caps are equal}.

In the literature, proving that compactly supported
Lipschitz functions are dense in $N_0^{1,p}(\Om)$, as well as proving that $\rcapa_p$ is an outer capacity,
relies on the quasicontinuity of Newton-Sobolev functions, see \cite{BBS}.
In this paper, our main tool is a partially analogous
quasi-semicontinuity property of $\BV$ functions proved in \cite{LaSh} in the metric setting,
and previously in \cite[Theorem 2.5]{CDLP} in the Euclidean setting.

\section{Definitions and notation}\label{sec:definitions}

In this section we introduce the notation, definitions, and assumptions used in the paper.

Throughout this paper, $(X,d,\mu)$ is a complete metric space that is equipped
with a doubling Borel regular outer measure $\mu$ and satisfies a Poincar\'e inequality
defined below.
The doubling condition means that there is a constant $C_d\ge 1$ such that
\[
0<\mu(B(x,2r))\leq C_d\mu(B(x,r))<\infty
\]
for every ball $B=B(x,r)$ with center $x\in X$ and radius $r>0$.
By iterating the doubling condition, we obtain that for any $x\in X$ and $y\in B(x,R)$ with $0<r\le R<\infty$, we have
\begin{equation}\label{eq:homogenous dimension}
\frac{\mu(B(y,r))}{\mu(B(x,R))}\ge \frac{1}{C_d^2}\left(\frac{r}{R}\right)^{Q},
\end{equation}
where $Q>1$ only depends on the doubling constant $C_d$.
When we want to state that a constant $C$
depends on the parameters $a,b, \ldots,$ we write $C=C(a,b,\ldots)$.
When a property holds outside a set of $\mu$-measure zero, we say that it holds
almost everywhere, abbreviated a.e.

All functions defined on $X$ or its subsets will take values in $[-\infty,\infty]$.
Since $X$ is complete and equipped with a doubling measure, it is proper,
meaning that closed and bounded sets are compact.
Since $X$ is proper, given an open set $\Om\subset X$
we define $\liploc(\Omega)$ to be the space of
functions that are in the Lipschitz class $\Lip(\Omega')$ for every open set $\Omega'$ whose closure is a compact subset of $\Om$.
Other local spaces of functions are defined analogously.

The measure-theoretic boundary $\partial^{*}E$ of a set $E\subset X$ is the set of points $x\in X$
at which both $E$ and its complement have strictly positive upper density, i.e.
\begin{equation}\label{eq:measure theoretic boundary}
\limsup_{r\to 0}\frac{\mu(B(x,r)\cap E)}{\mu(B(x,r))}>0\quad
\textrm{and}\quad\limsup_{r\to 0}\frac{\mu(B(x,r)\setminus E)}{\mu(B(x,r))}>0.
\end{equation}

For any $A\subset X$ and $0<R<\infty$, the restricted spherical Hausdorff content
of codimension one is defined by
\[
\mathcal{H}_{R}(A):=\inf\left\{ \sum_{i=1}^{\infty}
  \frac{\mu(B(x_{i},r_{i}))}{r_{i}}:\,A\subset\bigcup_{i=1}^{\infty}B(x_{i},r_{i}),\,r_{i}\le R\right\}.
\]
The codimension one Hausdorff measure of $A\subset X$ is then defined by
\begin{equation*}
  \mathcal{H}(A):=\lim_{R\rightarrow 0}\mathcal{H}_{R}(A).
\end{equation*}

By a curve we mean a nonconstant rectifiable continuous mapping from a compact interval
into $X$.
We say that a nonnegative Borel function $g$ on $X$ is an upper gradient 
of a function $u$
on $X$ if for every curve $\gamma$, we have
\begin{equation}\label{eq:definition of upper gradient}
|u(x)-u(y)|\le \int_\gamma g\,ds,
\end{equation}
where $x$ and $y$ are the end points of $\gamma$,
and the curve integral is defined by using an arc-length parametrization,
see \cite[Section 2]{HK} where upper gradients were originally introduced.
We interpret $|u(x)-u(y)|=\infty$ whenever  
at least one of $|u(x)|$, $|u(y)|$ is infinite.

In the following, let $1\le p<\infty$ (later we will almost exclusively consider the case $p=1$).
We say that a family $\Gamma$ of curves is of zero $p$-modulus if there is a 
nonnegative Borel function $\rho\in L^p(X)$ such that 
for all curves $\gamma\in\Gamma$, the curve integral $\int_\gamma \rho\,ds$ is infinite.
A property is said to hold for $p$-almost every curve
if it fails only for a curve family with zero $p$-modulus. 
If $g$ is a nonnegative $\mu$-measurable function on $X$
and (\ref{eq:definition of upper gradient}) holds for $p$-almost every curve,
we say that $g$ is a $p$-weak upper gradient of $u$. 
By only considering curves $\gamma$ in a set $A\subset X$, we can talk about
a function $g$ being a ($p$-weak) upper gradient of $u$ in $A$.

Given a $\mu$-measurable set $D\subset X$, we define
\[
\Vert u\Vert_{N^{1,p}(D)}:=\left(\int_D |u|^p\,d\mu+\inf \int_D g^p\,d\mu \right)^{1/p},
\]
where the infimum is taken over all $p$-weak upper gradients $g$ of $u$ in $D$.
The substitute for the Sobolev space $W^{1,p}$ in the metric setting is the Newton-Sobolev space
\[
N^{1,p}(D):=\{u:\,\|u\|_{N^{1,p}(D)}<\infty\},
\]
which was introduced in \cite{S}.
We understand a Newton-Sobolev function to be defined at every $x\in D$
(even though $\Vert \cdot\Vert_{N^{1,p}(D)}$ is then only a seminorm).
For any $D\subset X$, the space of Newton-Sobolev functions with zero boundary values is defined as
\[
N_0^{1,p}(D):=\{u|_{D}:\,u\in N^{1,p}(X)\textrm{ and }u=0\textrm { on }X\setminus D\}.
\]
The space is a subspace of $N^{1,p}(D)$ when $D$ is $\mu$-measurable, and it can always
be understood to be a subspace of $N^{1,p}(X)$.

It is known that for any $u\in N^{1,p}_{\loc}(X)$, there exists a minimal $p$-weak
upper gradient of $u$, always denoted by $g_{u}$, satisfying $g_{u}\le g$ 
a.e. for any $p$-weak upper gradient $g\in L_{\loc}^{p}(X)$
of $u$, see \cite[Theorem 2.25]{BB}.

The $p$-capacity of a set $A\subset X$ is given by
\[
 \capa_p(A):=\inf \Vert u\Vert_{N^{1,p}(X)}^p,
\]
where the infimum is taken over all functions $u\in N^{1,p}(X)$ such that $u\ge 1$ on $A$.
By truncation we see that we can additionally require $0\le u\le 1$.
We know that $\capa_p$ is an outer capacity, meaning that
\[
\capa_p(A)=\inf\{\capa_p(U):\,U\supset A\textrm{ is open}\}
\]
for any $A\subset X$, see \cite[Theorem 5.31]{BB}.
If a property holds outside a set
$A\subset X$ with $\capa_p(A)=0$, we say that it holds $p$-quasieverywhere, or $p$-q.e.
If $u\in N^{1,p}(X)$, then
\begin{equation}\label{eq:quasieverywhere equivalence classes}
\Vert u-v\Vert_{N^{1,p}(X)}=0\quad \textrm{if and only if}\quad u=v\  p\textrm{-q.e.},
\end{equation}
see \cite[Proposition 1.61]{BB}. Thus in the definition of $N^{1,p}_0(D)$, we can
equivalently require that
$u=0$ $p$-q.e. on $X\setminus D$.
The variational $p$-capacity of a set $A\subset D$ with respect to a set $D\subset X$ is
\begin{equation}\label{eq:def of variational p-capacity}
\rcapa_p(A,D):=\inf \int_{X} g_u^p\,d\mu,
\end{equation}
where the infimum is taken over functions $u\in N_0^{1,p}(D)$ such that $u\ge 1$ on $A$ (equivalently,  $p$-q.e. on $A$).
For basic properties satisfied by the $p$-capacity and the variational $p$-capacity, such as
monotonicity and countable subadditivity, see \cite{BB,BB-cap}.

Next we recall the definition and basic properties of functions
of bounded variation on metric spaces, essentially following \cite{M}. See also e.g.
\cite{AFP, EvaG92, Fed, Giu84, Zie89} for the classical 
theory in the Euclidean setting.
Let $\Om\subset X$ be an open set.
For $u\in L^1_{\loc}(\Om)$, we define the total variation of $u$ in $\Om$ by
\[
\|Du\|(\Om):=\inf\left\{\liminf_{i\to\infty}\int_\Om g_{u_i}\,d\mu:\, u_i\in
\Lip_{\loc}(\Om),\, u_i\to u\textrm{ in } L^1_{\loc}(\Om)\right\},
\]
where each $g_{u_i}$ is the minimal $1$-weak upper gradient of $u_i$ in $\Om$.
Note that in \cite{M}, local Lipschitz constants were used instead of upper gradients, but
the properties of the total variation can be proved similarly with either definition.
We say that $u\in L^1(\Om)$ is a function of bounded variation,
and denote $u\in\BV(\Om)$, if $\|Du\|(\Om)<\infty$.
For an arbitrary set $A\subset X$, we define
\[
\|Du\|(A):=\inf\{\|Du\|(U):\, A\subset U,\,U\subset X
\text{ is open}\}.
\]
If $\Vert Du\Vert(\Omega)<\infty$, then
$\|Du\|(\cdot)$ is a finite Radon measure on $\Omega$ by
\cite[Theorem 3.4]{M}.
A $\mu$-measurable set $E\subset X$ is said to be of \emph{finite perimeter} if
$\|D\ch_E\|(X)<\infty$, where $\ch_E$ is the characteristic function of $E$.
The perimeter of $E$ in $\Omega$ is also denoted by
\[
P(E,\Omega):=\|D\ch_E\|(\Omega).
\]
The $\BV$ norm is defined by
\[
\Vert u\Vert_{\BV(\Om)}:=\Vert u\Vert_{L^1(\Om)}+\Vert Du\Vert(\Om).
\]
The $\BV$-capacity of a set $A\subset X$ is defined by
\[
\capa_{\BV}(A):=\inf \Vert u\Vert_{\BV(X)},
\]
where the infimum is taken over all $u\in\BV(X)$ with $u\ge 1$ in a neighborhood of $A$.

The following coarea formula is given in \cite[Proposition 4.2]{M}:
if $\Omega\subset X$ is an open set and $u\in L^1_{\loc}(\Omega)$, then
\begin{equation}\label{eq:coarea}
\|Du\|(\Omega)=\int_{-\infty}^{\infty}P(\{u>t\},\Omega)\,dt.
\end{equation}
If $\Vert Du\Vert(\Om)<\infty$, the above holds with $\Om$ replaced by any Borel set $A\subset \Om$.

We will assume throughout the paper  that $X$ supports a $(1,1)$-Poincar\'e inequality,
meaning that there exist constants $C_P\ge 1$ and $\lambda \ge 1$ such that for every
ball $B(x,r)$, every locally integrable function $u$ on $X$,
and every upper gradient $g$ of $u$,
we have 
\[
\vint{B(x,r)}|u-u_{B(x,r)}|\, d\mu 
\le C_P r\vint{B(x,\lambda r)}g\,d\mu,
\]
where 
\[
u_{B(x,r)}:=\vint{B(x,r)}u\,d\mu :=\frac 1{\mu(B(x,r))}\int_{B(x,r)}u\,d\mu.
\]
The $(1,1)$-Poincar\'e inequality implies the so-called Sobolev-Poincar\'e inequality, see e.g. \cite[Theorem 4.21]{BB}, and by applying the latter to approximating locally Lipschitz functions in the definition of the total variation, we get the following Sobolev-Poincar\'e inequality for $\BV$ functions. For every ball $B(x,r)$ and every $u\in L^1_{\loc}(X)$, we have
\begin{equation}\label{eq:sobolev poincare inequality}
\left(\,\vint{B(x,r)}|u-u_{B(x,r)}|^{Q/(Q-1)}\,d\mu\right)^{(Q-1)/Q}
\le C_{SP}r\frac{\Vert Du\Vert (B(x,2\lambda r))}{\mu(B(x,2\lambda r))},
\end{equation}
where $Q>1$ is the exponent from \eqref{eq:homogenous dimension} and
$C_{SP}=C_{SP}(C_d,C_P,\lambda)\ge 1$ is a constant.

Given an open set $\Omega\subset X$ and a $\mu$-measurable set $E\subset X$ with
$P(E,\Omega)<\infty$, for any $A\subset\Om$ we have
\begin{equation}\label{eq:def of theta}
\alpha \mathcal H(\partial^*E\cap A)\le P(E,A)\le C_d \mathcal H(\partial^*E\cap A),
\end{equation}
where $\alpha=\alpha(C_d,C_P,\lambda)>0$, see \cite[Theorem 5.3]{A1} 
and \cite[Theorem 4.6]{AMP}.

The lower and upper approximate limits of a function $u$ on $X$ are defined respectively by
\[
u^{\wedge}(x):
=\sup\left\{t\in\R:\,\lim_{r\to 0}\frac{\mu(\{u<t\}\cap B(x,r))}{\mu(B(x,r))}=0\right\}
\]
and
\[
u^{\vee}(x):
=\inf\left\{t\in\R:\,\lim_{r\to 0}\frac{\mu(\{u>t\}\cap B(x,r))}{\mu(B(x,r))}=0\right\}.
\]
It is straightforward to show that $u^{\wedge}$ and $u^{\vee}$ are Borel functions.

We understand $\BV$ functions to be $\mu$-equivalence classes.
For example, in the coarea formula \eqref{eq:coarea}, each $\{u>t\}$ is precisely speaking not a set
but a $\mu$-equivalence class of sets.
On the other hand, the pointwise
representatives $u^{\wedge}$ and $u^{\vee}$ are defined at every point.
From Lebesgue's differentiation theorem (see e.g. \cite[Chapter 1]{Hei}) it follows that
$u^{\wedge}=u^{\vee}=u$ a.e.

\section{BV functions with zero boundary values}

In this section we define and study a class of $\BV$ functions with zero boundary values.

First we gather some results that we will need.
The following result is well known and proved for sets of finite perimeter in \cite{M},
but we recite the proof of the more general case here.
\begin{lemma}\label{lem:BV functions form algebra}
Let $\Om\subset X$ be an open set and let $u,v\in L^1_{\loc}(\Om)$. Then
\[
\Vert D\min\{u,v\}\Vert(\Om)+\Vert D\max\{u,v\}\Vert(\Om)\le
\Vert Du\Vert(\Om)+\Vert Dv\Vert(\Om).
\]
\end{lemma}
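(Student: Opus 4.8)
The plan is to argue directly from the definition of the total variation as an infimum over locally Lipschitz approximations, using the pointwise lattice behaviour of minimal $1$-weak upper gradients. First I would dispose of the trivial case: if either $\Vert Du\Vert(\Om)$ or $\Vert Dv\Vert(\Om)$ is infinite, the right-hand side is $+\infty$ and there is nothing to prove, so I may assume both are finite. Then, by the definition of the total variation, I would choose sequences $u_i,v_i\in\Lip_{\loc}(\Om)$ with $u_i\to u$ and $v_i\to v$ in $L^1_{\loc}(\Om)$ and with
\[
\int_\Om g_{u_i}\,d\mu\to\Vert Du\Vert(\Om),\qquad \int_\Om g_{v_i}\,d\mu\to\Vert Dv\Vert(\Om).
\]

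Next I would set $w_i:=\min\{u_i,v_i\}$ and $W_i:=\max\{u_i,v_i\}$. Since $\min$ and $\max$ are $1$-Lipschitz in each argument, $w_i,W_i\in\Lip_{\loc}(\Om)$, and the elementary estimate $|\min\{a,b\}-\min\{c,d\}|\le|a-c|+|b-d|$ (and similarly for $\max$) gives $w_i\to\min\{u,v\}$ and $W_i\to\max\{u,v\}$ in $L^1_{\loc}(\Om)$. The crucial input is the lattice identity for minimal upper gradients of Newton--Sobolev (in particular, locally Lipschitz) functions: $g_{u_i}=g_{v_i}$ a.e. on $\{u_i=v_i\}$, while $g_{W_i}=g_{u_i}$ and $g_{w_i}=g_{v_i}$ a.e. on $\{u_i>v_i\}$, with the roles of $u_i,v_i$ reversed on $\{u_i<v_i\}$. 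Splitting $\Om$ into these three pieces yields
\[
g_{w_i}+g_{W_i}=g_{u_i}+g_{v_i}\quad\text{a.e. in }\Om,
\]
and in particular $\int_\Om g_{w_i}\,d\mu+\int_\Om g_{W_i}\,d\mu=\int_\Om g_{u_i}\,d\mu+\int_\Om g_{v_i}\,d\mu$. Finally I would feed the admissible sequences $(w_i)$ and $(W_i)$ into the definition of total variation to obtain
\[
\Vert D\min\{u,v\}\Vert(\Om)\le\liminf_{i\to\infty}\int_\Om g_{w_i}\,d\mu,\qquad
\Vert D\max\{u,v\}\Vert(\Om)\le\liminf_{i\to\infty}\int_\Om g_{W_i}\,d\mu,
\]
and then add these two inequalities, using the superadditivity of the lower limit together with the displayed identity:
\[
\liminf_i\int_\Om g_{w_i}\,d\mu+\liminf_i\int_\Om g_{W_i}\,d\mu
\le\liminf_i\Big(\int_\Om g_{u_i}\,d\mu+\int_\Om g_{v_i}\,d\mu\Big)
=\Vert Du\Vert(\Om)+\Vert Dv\Vert(\Om),
\]
where the last equality holds because both sequences of integrals converge by our choice. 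This chain gives exactly the claimed inequality.

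The main obstacle---really the only substantive point---is the pointwise lattice identity for minimal upper gradients; everything else is bookkeeping with the definition of the total variation and the elementary properties of the lower limit. I would justify it by invoking the standard facts that $g_u=g_v$ a.e. on any set where $u=v$ and that minimal upper gradients are local, applied to the Lipschitz functions $u_i,v_i$ on the three pieces $\{u_i>v_i\}$, $\{u_i<v_i\}$, $\{u_i=v_i\}$. I note that only the inequality $g_{w_i}+g_{W_i}\le g_{u_i}+g_{v_i}$ is actually needed for the argument, so even a weaker form of the lattice property would suffice.
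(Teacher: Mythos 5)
Your proposal is correct and follows essentially the same route as the paper: approximate $u$ and $v$ by locally Lipschitz sequences realizing the total variations, use the lattice/locality property of minimal $1$-weak upper gradients (the paper cites \cite[Corollary 2.20]{BB} for exactly the identity you derive from locality) to get $g_{\min\{u_i,v_i\}}+g_{\max\{u_i,v_i\}}=g_{u_i}+g_{v_i}$ a.e., and conclude via the definition of the total variation together with superadditivity of the lower limit. No gaps.
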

\begin{proof}
We can assume that the right-hand side is finite. Take sequences
of functions $(u_i),(v_i)\subset\liploc(\Om)$
such that $u_i\to u$ and $v_i\to v$ in $L^1_{\loc}(\Om)$, and
\[
\lim_{i\to\infty}\int_{\Om}g_{u_i}\,d\mu= \Vert Du\Vert(\Om)\quad\textrm{and}\quad
\lim_{i\to\infty}\int_{\Om}g_{v_i}\,d\mu= \Vert Dv\Vert(\Om).
\]
By \cite[Corollary 2.20]{BB}, we have
\[
g_{\min\{u_i,v_i\}}=g_{u_i}\ch_{\{u_i\le v_i\}}+g_{v_i}\ch_{\{u_i> v_i\}},\quad\
g_{\max\{u_i,v_i\}}=g_{u_i}\ch_{\{u_i> v_i\}}+g_{v_i}\ch_{\{u_i\le v_i\}}
\]
in $\Om$.
Since also $\min\{u_i,v_i\}\to \min\{u,v\}$ and
$\max\{u_i,v_i\}\to \max\{u,v\}$ in $L^1_{\loc}(\Om)$, we get
\begin{align*}
&\Vert D\min\{u,v\}\Vert(\Om)+\Vert D\max\{u,v\}\Vert(\Om)\\
&\qquad\qquad\le \liminf_{i\to\infty}\int_{\Om} g_{\min\{u_i,v_i\}}\,d\mu+
\liminf_{i\to\infty}\int_{\Om} g_{\max\{u_i,v_i\}}\,d\mu\\
&\qquad\qquad\le \liminf_{i\to\infty}\left(\int_{\Om} g_{u_i}\,d\mu+
\int_{\Om} g_{v_i}\,d\mu\right)\\
&\qquad\qquad= \Vert Du\Vert(\Om)+\Vert Dv\Vert(\Om).
\end{align*}
\end{proof}

Moreover, for any $u,v\in L^1_{\loc}(\Om)$, it is straightforward to show that
\begin{equation}\label{eq:BV functions form vector space}
\Vert D(u+v)\Vert(\Om)\le \Vert Du\Vert(\Om)+\Vert Dv\Vert(\Om).
\end{equation}

Since Lipschitz functions are dense in $N^{1,1}(X)$, see \cite[Theorem 5.1]{BB}, it follows that
\begin{equation}\label{eq:Sobolev subclass BV}
N^{1,1}(X)\subset \BV(X)\quad \textrm{with}\quad \Vert Du\Vert(X)\le \int_X g_u\,d\mu\ \ 
\textrm{for every }u\in N^{1,1}(X).
\end{equation}
Recall that we interpret Newton-Sobolev functions to be pointwise defined, whereas
$\BV$ functions are $\mu$-equivalence classes, but nonetheless the inclusion $N^{1,1}(X)\subset \BV(X)$ has a natural interpretation.

The $\BV$-capacity is often convenient due to the following property
not satisfied by the $1$-capacity:
if $A_1\subset A_2\subset \ldots\subset X$, then
\begin{equation}\label{eq:continuity of BVcap}
\capa_{\BV}\left(\bigcup_{j=1}^{\infty}A_j\right)=\lim_{j\to\infty} \capa_{\BV}(A_j),
\end{equation}
see \cite[Theorem 3.4]{HaKi}.
On the other hand, by \cite[Theorem 4.3]{HaKi} we know that for some constant
$C(C_d,C_P,\lambda)\ge 1$ and any
$A\subset X$, we have
\begin{equation}\label{eq:Newtonian and BV capacities are comparable}
\capa_{\BV}(A)\le \capa_1(A)\le C\capa_{\BV}(A).
\end{equation}
By \cite[Theorem 4.3, Theorem 5.1]{HaKi} we know that for $A\subset X$,
\begin{equation}\label{eq:null sets of Hausdorff measure and capacity}
\capa_1(A)=0\quad \textrm{if and only if}\quad \mathcal H(A)=0.
\end{equation}

The following lemma states that a sequence converging in the $\BV$ norm has a
subsequence converging pointwise $\mathcal H$-almost everywhere.

\begin{lemma}\label{lem:norm and pointwise convergence}
Let $u_i,u\in\BV(X)$ with $u_i\to u$ in $\BV(X)$. By passing to a subsequence (not relabeled),
we have $u_i^{\wedge}\to u^{\wedge}$ and $u_i^{\vee}\to u^{\vee}$ $\mathcal H$-a.e.
\end{lemma}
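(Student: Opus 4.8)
The plan is to reduce everything to a weak-type estimate for the $\BV$-capacity together with a Borel--Cantelli argument.

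First I would record how the approximate limits behave under the convergence. Writing $w_i:=u_i-u$, so that $\|w_i\|_{\BV(X)}\to 0$, I would use that the upper approximate limit is subadditive and the lower approximate limit superadditive, that is, $(f+g)^{\vee}\le f^{\vee}+g^{\vee}$ and $(f+g)^{\wedge}\ge f^{\wedge}+g^{\wedge}$ pointwise (with $(-f)^{\vee}=-f^{\wedge}$); these follow directly from the density definitions of $u^{\vee}$ and $u^{\wedge}$. Applying this to $u_i=u+w_i$ and to $u=u_i+(-w_i)$ gives, $\mathcal H$-a.e.\ (after discarding the $\mathcal H$-null set where the relevant approximate limits are infinite),
\[
w_i^{\wedge}\le u_i^{\vee}-u^{\vee}\le w_i^{\vee}\quad\text{and}\quad w_i^{\wedge}\le u_i^{\wedge}-u^{\wedge}\le w_i^{\vee}.
\]
Hence both differences are bounded by $M_i:=\max\{|w_i^{\vee}|,|w_i^{\wedge}|\}$, and since $w_i^{\wedge}\le w_i^{\vee}$ we have $\{M_i\ge\lambda\}=\{w_i^{\vee}\ge\lambda\}\cup\{w_i^{\wedge}\le-\lambda\}$ for every $\lambda>0$. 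Thus it suffices to produce a single subsequence along which $M_i\to 0$ $\mathcal H$-a.e.

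The heart of the matter is the weak-type bound
\[
\capa_{\BV}(\{w^{\vee}\ge\lambda\})\le\frac{C}{\lambda}\|w\|_{\BV(X)},\qquad\lambda>0,
\]
for $w\in\BV(X)$; the symmetric estimate for $\{w^{\wedge}\le-\lambda\}$ then follows by replacing $w$ with $-w$ and using $(-w)^{\vee}=-w^{\wedge}$. For the norm side I would use the truncation $v:=\min\{w^{+}/\lambda,1\}$, which by the coarea formula \eqref{eq:coarea} (or Lemma \ref{lem:BV functions form algebra}) satisfies $\|v\|_{\BV(X)}\le\lambda^{-1}\|w\|_{\BV(X)}$; since $v$ is a nondecreasing continuous function of $w$ one has $v^{\vee}=1$ on $\{w^{\vee}\ge\lambda\}$, so that this set is contained in $\{v^{\vee}\ge 1\}$. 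The main obstacle is that the definition of $\capa_{\BV}$ requires a competitor that is $\ge 1$ on an honest open neighborhood, whereas $\{v^{\vee}\ge 1\}$ is only a measure-theoretic set, near whose points $v$ need not be close to $1$. I expect this to be the technical crux, and I would resolve it by replacing $v$ with a discrete-convolution (Lipschitz) regularization at a small scale, whose bounded overlap together with the Poincar\'e inequality (hence \eqref{eq:sobolev poincare inequality}) keeps its $\BV$ norm comparable to $\|v\|_{\BV(X)}$ while making it $\ge 1$ on a genuine open set containing $\{v^{\vee}\ge 1\}$; alternatively one may pass through the comparable Sobolev $1$-capacity via \eqref{eq:Newtonian and BV capacities are comparable} and the corresponding weak-type estimate for $\capa_1$.

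Finally I would run the Borel--Cantelli argument. After passing to a subsequence with $\sum_i\|w_i\|_{\BV(X)}<\infty$, the weak-type bound and countable subadditivity of $\capa_{\BV}$ give, for each fixed $k\in\N$,
\[
\capa_{\BV}\Big(\limsup_{i\to\infty}\{M_i\ge 1/k\}\Big)\le\lim_{I\to\infty}\sum_{i\ge I}\capa_{\BV}(\{M_i\ge 1/k\})=0.
\]
By \eqref{eq:Newtonian and BV capacities are comparable} and \eqref{eq:null sets of Hausdorff measure and capacity}, each of these $\limsup$ sets is $\mathcal H$-null, and hence so is their union $N$ over $k\in\N$. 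For $x\notin N$ we have $M_i(x)\to 0$, whence $u_i^{\vee}(x)\to u^{\vee}(x)$ and $u_i^{\wedge}(x)\to u^{\wedge}(x)$, which is the claim. The continuity property \eqref{eq:continuity of BVcap} could be substituted for the explicit Borel--Cantelli step.
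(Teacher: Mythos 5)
Your overall architecture --- reducing, via the sub/superadditivity of approximate limits, to showing $M_i\to 0$ $\mathcal H$-a.e., and then combining a capacitary weak-type estimate with Borel--Cantelli, \eqref{eq:Newtonian and BV capacities are comparable} and \eqref{eq:null sets of Hausdorff measure and capacity} --- is sound, and it is in fact how the result is proved in the literature: the paper itself disposes of the lemma in three lines by citing \cite[Lemma 4.2]{LaSh}, whose proof runs along essentially the lines you propose. The genuine gap is at the step you yourself flag as the crux: the weak-type bound $\capa_{\BV}(\{w^{\vee}\ge\lambda\})\le C\lambda^{-1}\Vert w\Vert_{\BV(X)}$ is the entire content of the matter, and neither of your two suggested routes closes it. The discrete-convolution route fails for a concrete reason: if $v=\ch_E$ where $E$ has upper density $\delta>0$ at $x$, with $\delta$ as small as you like, then $x\in\{v^{\vee}\ge 1\}$, yet every ball average of $v$ at every sufficiently small scale centered at $x$ is at most comparable to $\delta$; hence no Lipschitz regularization of $v$ at small scales is bounded below by a fixed positive constant near such points, and rescaling by $1/\delta$ destroys the norm bound since $\delta$ is not uniform over the set. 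Producing a competitor that is $\ge 1$ on an \emph{open neighborhood} of $\{v^{\vee}\ge 1\}$ is exactly as hard as the original problem. The second route is circular: \eqref{eq:Newtonian and BV capacities are comparable} compares $\capa_{\BV}(A)$ and $\capa_1(A)$ for a fixed set $A$, but it does not convert the $\BV$ function $v$ into an admissible competitor for $\capa_1$ (those must be Newton--Sobolev functions, and $v$ is not), and the ``corresponding weak-type estimate for $\capa_1$'' of superlevel sets of $w^{\vee}$ for merely $\BV$ functions $w$ is precisely the nontrivial statement at stake --- it is trivial for $N^{1,1}$ functions, but that triviality does not transfer.

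The known ways to close the gap bring in real machinery that your sketch does not supply: either cite the weak-type estimate directly (it is proved in \cite{LaSh}, and in essence in \cite{HaKi}, via the boxing inequality of \cite{KKST2}), or argue inside this paper as follows. Pick $t\in(1/4,3/4)$ with $P(\{v>t\},X)\le 4\Vert Dv\Vert(X)$ using the coarea formula \eqref{eq:coarea}; note that $\{v^{\vee}\ge 1\}$ is contained in the union of the density-one points of $E:=\{v>t\}$ and $\partial^{*}E$; handle the first set by Proposition \ref{prop:BV-capacity outer capacity} with competitor $\ch_E$ (since $\ch_E^{\wedge}=1$ at density-one points), and the second by covering $\partial^{*}E$ with small balls and using \eqref{eq:def of theta} together with countable subadditivity of $\capa_{\BV}$. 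Note, however, that Proposition \ref{prop:BV-capacity outer capacity} itself rests on the quasi-semicontinuity result (Proposition \ref{prop:quasisemicontinuity}, i.e.\ \cite[Theorem 1.1]{LaSh}), so even this ``internal'' repair ultimately uses the same external input as the paper's citation. In short: your skeleton is correct --- the approximate-limit inequalities, the identity $\{M_i\ge\lambda\}=\{w_i^{\vee}\ge\lambda\}\cup\{w_i^{\wedge}\le-\lambda\}$, and the Borel--Cantelli step all check out --- but the load-bearing estimate is left unproved, and both of your sketches for it would fail.
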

\begin{proof}
By \cite[Lemma 4.2]{LaSh}, for every $\eps>0$ there exists 
$G\subset X$ with $\capa_1(G)<\eps$ such that by passing to a subsequence, if necessary (not relabeled), 
$u_i^{\wedge}\to u^{\wedge}$ and $u_i^{\vee}\to u^{\vee}$ uniformly in $X\setminus G$.
From this it easily follows that we find a subsequence (not relabeled) such that
$u_i^{\wedge}\to u^{\wedge}$ and $u_i^{\vee}\to u^{\vee}$ $1$-q.e., and then
\eqref{eq:null sets of Hausdorff measure and capacity} completes the proof.
\end{proof}

It is a well-known fact that Newton-Sobolev functions are quasicontinuous;
for a proof see \cite[Theorem 1.1]{BBS} or \cite[Theorem 5.29]{BB}.

\begin{theorem}\label{thm:quasicontinuity}
Let $1\le p<\infty$, let $u\in N^{1,p}(X)$, and let $\eps>0$.
Then there exists an open set $G\subset X$ with $\capa_p(G)<\eps$
such that $u|_{X\setminus G}$ is real-valued continuous.
\end{theorem}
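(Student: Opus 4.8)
The plan is to approximate $u$ by Lipschitz functions and to control, via a capacitary weak-type estimate, the sets where successive approximants differ substantially. First I would use the density of Lipschitz functions in $N^{1,p}(X)$ to choose a sequence $u_i\in\Lip(X)$ with $u_i\to u$ in $N^{1,p}(X)$, and then pass to a subsequence (not relabeled) arranged so that $\|u_{i+1}-u_i\|_{N^{1,p}(X)}\le 4^{-i}$. The geometric decay of the successive differences is what drives the whole argument.

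Next, for each $i$ put $w_i:=u_{i+1}-u_i\in\Lip(X)$ and consider the sets
\[
E_i:=\{x\in X:\,|w_i(x)|>2^{-i}\},
\]
which are open because each $w_i$ is continuous. Since $\min\{2^i|w_i|,1\}\ge 1$ on $E_i$ and belongs to $N^{1,p}(X)$ (it is bounded and supported on the finite-measure set $E_i$), it is admissible in the definition of $\capa_p(E_i)$, which yields the weak-type bound $\capa_p(E_i)\le 2^{ip}\|w_i\|_{N^{1,p}(X)}^p\le 2^{ip}4^{-ip}=2^{-ip}$. Setting $G_j:=\bigcup_{i\ge j}E_i$, countable subadditivity gives $\capa_p(G_j)\le\sum_{i\ge j}2^{-ip}\to 0$, so $\capa_p(G_j)<\eps/2$ once $j$ is large. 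Each $G_j$ is open, and on $X\setminus G_j$ we have $|u_{i+1}-u_i|\le 2^{-i}$ for all $i\ge j$, so $(u_i)$ is uniformly Cauchy there; its uniform limit $v$ is therefore real-valued and continuous on $X\setminus G_j$.

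It remains to identify $v$ with $u$ off a small open set. The convergence $u_i\to u$ in $N^{1,p}(X)$, together with the same weak-type estimate applied to $u-u_i$ and the equivalence \eqref{eq:quasieverywhere equivalence classes}, shows after passing to a further subsequence that $u_i\to u$ pointwise $p$-q.e.; hence $N:=\{x:\,u_i(x)\not\to u(x)\}$ satisfies $\capa_p(N)=0$. On $X\setminus(G_j\cup N)$ the pointwise limit of $(u_i)$ is simultaneously $v$ and $u$, so $u=v$ there. Finally, since $\capa_p$ is an outer capacity, I would enclose $N$ in an open set $U$ with $\capa_p(U)<\eps/2$; then $G:=G_j\cup U$ is open with $\capa_p(G)<\eps$, and $u=v$ is real-valued and continuous on $X\setminus G$.

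The main obstacle is the last step: reconciling the everywhere-defined Newtonian representative $u$ with the uniform limit $v$ of the Lipschitz approximants. This hinges on upgrading $N^{1,p}$-norm convergence to pointwise $p$-q.e.\ convergence, which in turn rests on the weak-type capacity estimate and the equivalence \eqref{eq:quasieverywhere equivalence classes}. Once that is in hand, the outer-capacity property of $\capa_p$ absorbs the remaining capacity-zero discrepancy, and the uniform convergence on $X\setminus G_j$ delivers both continuity and real-valuedness of the limit.
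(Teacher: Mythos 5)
The paper does not actually prove Theorem \ref{thm:quasicontinuity}: it imports the result from \cite[Theorem 1.1]{BBS} and \cite[Theorem 5.29]{BB}, so your attempt must stand as a from-scratch proof. Most of your steps are the classical telescoping argument and are sound: Lipschitz functions are dense in $N^{1,p}(X)$ under the standing assumptions; the weak-type bound $\capa_p(E_i)\le 2^{ip}\Vert w_i\Vert_{N^{1,p}(X)}^p$ follows correctly from the admissible function $\min\{2^i|w_i|,1\}$ (although your parenthetical justification is off: this function lies in $N^{1,p}(X)$ because it is dominated by $2^i|w_i|$ and has $2^ig_{w_i}$ as a $p$-weak upper gradient, not because of any finite-measure support); the sets $E_i$ are open since the $w_i$ are continuous; countable subadditivity gives $\capa_p(G_j)$ small; and uniform Cauchyness off $G_j$ plus pointwise $p$-q.e.\ convergence of a further subsequence (weak-type estimate applied to $u-u_i$ and Borel--Cantelli) correctly identifies $u=v$ off $G_j\cup N$ with $\capa_p(N)=0$.

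The genuine gap is the step you yourself call the main obstacle: to absorb the capacity-null set $N$ into an open set you invoke the outer-capacity property of $\capa_p$. Inside this paper that property is quoted as a known fact, but its source is \cite[Theorem 5.31]{BB}, and the proof there \emph{derives} outer regularity from the very quasicontinuity theorem you are proving (given an admissible $u$ for a set $A$, quasicontinuity supplies an open $G$ of small capacity so that $\{u>1-\eps\}\cup G$ is an open superset of $A$ of nearly minimal capacity). So, traced through the literature, your argument is circular, and the circularity is not cosmetic: a set of capacity zero is not obviously contained in open sets of small capacity, because admissible functions in the definition of $\capa_p$ need not be (semi)continuous, so their superlevel sets need not be open. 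Producing an \emph{open} exceptional set is precisely the hard content of quasicontinuity in the metric setting. The published proofs get openness without appealing to outer regularity --- roughly, the Lipschitz approximants are built so as to coincide with $u$ outside superlevel sets of maximal functions, which are open by lower semicontinuity --- and outer regularity of $\capa_p$ is then deduced as a corollary, not used as an ingredient. If you are explicitly permitted to take the outer-capacity property of $\capa_p$ as a black box, your proof is complete as written; but by that standard the theorem itself could equally be taken as a black box, which is exactly what the paper does.
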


In this paper we will rely heavily on the fact that $\BV$ functions have the following
quasi-semicontinuity property, which was first proved in the Euclidean setting in
\cite[Theorem 2.5]{CDLP}. Since we understand $\BV$ functions to be $\mu$-equivalence classes,
we need to consider the representatives $u^{\wedge}$ and $u^{\vee}$ when studying continuity
properties.

\begin{proposition}\label{prop:quasisemicontinuity}
Let $u\in\BV(X)$ and let $\eps>0$. Then there exists an open set $G\subset X$ with $\capa_1(G)<\eps$
such that $u^{\wedge}|_{X\setminus G}$ is real-valued lower semicontinuous and $u^{\vee}|_{X\setminus G}$ is real-valued upper semicontinuous.
\end{proposition}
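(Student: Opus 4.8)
The plan is to first dispose of the statement about $u^\vee$ by a symmetry trick, and then to prove the quasi-lower-semicontinuity of $u^\wedge$ by showing that the set where lower semicontinuity genuinely fails is $\mathcal H$-negligible and can therefore be enclosed in an open set of arbitrarily small capacity. I would begin by observing that $(-u)^\wedge=-u^\vee$, so once $u^\wedge$ is known to be quasi-lower-semicontinuous for every $\BV$ function, applying this to $-u$ shows that $u^\vee=-(-u)^\wedge$ is quasi-upper-semicontinuous; taking the union of the two exceptional open sets, each of $\capa_1<\eps/2$, yields the single set $G$ in the statement. It thus suffices to handle $u^\wedge$. I would fix the exact failure set
\[
Z:=\{x\in X:\ \liminf_{y\to x}u^\wedge(y)<u^\wedge(x)\},
\]
and note that removing any set containing $Z$ can only raise the relevant $\liminf$, so if $Z$ can be enclosed in an open $G$ with $\capa_1(G)<\eps$, then $u^\wedge|_{X\setminus G}$ is automatically lower semicontinuous (finiteness of $u^\wedge,u^\vee$ off $G$ is arranged simultaneously, since the points where these are infinite form an $\mathcal H$-null set that can be absorbed into $G$).

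The heart of the argument is to prove that $\mathcal H(Z)=0$. By definition a point $x\in Z$ carries points $y_n\to x$ with $u^\wedge(y_n)\le u^\wedge(x)-\eta$ for some $\eta>0$, and applying the Sobolev--Poincar\'e inequality \eqref{eq:sobolev poincare inequality} on the balls $B(x,\rho_n)$ with $\rho_n\simeq d(x,y_n)\to0$ forces a uniform lower bound $\rho_n\|Du\|(B(x,\rho_n))/\mu(B(x,\rho_n))\gtrsim\eta$, so that $x$ has strictly positive upper ``perimeter density''. This crude estimate is, however, \emph{not} decisive, because the same density lower bound is shared by the ordinary jump points of $u$, where lower semicontinuity in fact holds. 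The real input is the fine structure of $\BV$ functions: using the coarea formula \eqref{eq:coarea} one reduces to the measure-theoretic interiors $\inte^*\{u>s\}$ (the sets of density-one points), and the relation \eqref{eq:def of theta} between perimeter and $\mathcal H$, together with the structure theory for sets of finite perimeter in this setting (by which $\mathcal H$-almost every point of $\partial^*\{u>s\}$ is a point of density $\tfrac12$), shows that $\mathcal H$-almost every $x\in X$ is either an approximate continuity point of $u$ or a point where $u$ has one-sided approximate limits $u^\wedge(x)<u^\vee(x)$ across a codimension-one set. At both kinds of point a direct density computation gives $\liminf_{y\to x}u^\wedge(y)\ge u^\wedge(x)$, i.e. $x\notin Z$; only the remaining irregular points survive, and these form an $\mathcal H$-null set, so $\mathcal H(Z)=0$.

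Finally, since $\mathcal H(Z)=0$ we have $\capa_1(Z)=0$ by \eqref{eq:null sets of Hausdorff measure and capacity}, and I would enclose $Z$ in a small open set explicitly: cover $Z$ by balls $B(x_i,r_i)$ with $\sum_i\mu(B(x_i,r_i))/r_i$ as small as desired (by the definition of $\mathcal H$), and combine the elementary bound $\capa_1(B(x_i,r_i))\le C\mu(B(x_i,r_i))/r_i$, obtained from the Lipschitz cutoff $\max\{0,1-\dist(\cdot,B(x_i,r_i))/r_i\}$, with the countable subadditivity of $\capa_1$ to produce an open $G\supset Z$ with $\capa_1(G)<\eps/2$. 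Carrying this out for $Z$ (from $u^\wedge$) and for the analogous set attached to $-u$ (governing $u^\vee$), and taking the union, completes the construction. The main obstacle is squarely the second paragraph: separating the genuine semicontinuity failures from the ordinary regular jump points, for which the measure- and capacity-density estimates alone are insufficient and the structure theory of finite-perimeter sets and $\BV$ functions in the metric setting must be invoked.
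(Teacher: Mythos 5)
Your opening reduction is sound: the symmetry $(-u)^{\wedge}=-u^{\vee}$ correctly reduces everything to $u^{\wedge}$, and it is true that if the failure set $Z:=\{x\in X:\,\liminf_{y\to x}u^{\wedge}(y)<u^{\wedge}(x)\}$ could be enclosed in an open set of small capacity, then lower semicontinuity of the restriction would follow. But the heart of your argument --- the claim that $\mathcal H(Z)=0$ --- is false, and with it the whole strategy collapses. Counterexample in $X=\R^2$: let $I:=[0,1]\times\{0\}$, let $B_0$ be a large ball containing $I$, and let $\{B_n\}_{n\ge 1}$ be pairwise disjoint open balls, disjoint from $I$ and contained in $B_0$, centered at the points $(q,1/k)$ with $q\in\Q\cap[0,1]$ and $k\in\N$, with radii $r_n$ so small that $\sum_n r_n<\infty$ and that $\bigcup_n B_n$ has Lebesgue density $0$ at every point of $I$ (for instance, make the radii of all balls at height $1/k$ sum to at most $16^{-k}$). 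Put $u:=\ch_{B_0}-\sum_{n\ge 1}\ch_{B_n}$. Then $u\in\BV(\R^2)$ since $\Vert Du\Vert(\R^2)\le P(B_0,\R^2)+\sum_n P(B_n,\R^2)<\infty$; every $x\in I$ is a point of approximate continuity of $u$ with $u^{\wedge}(x)=u^{\vee}(x)=1$; and yet every $x\in I$ is a limit of ball centers $y$ at which $u^{\wedge}(y)=0$, because $u\equiv 0$ on each $B_n$. Hence $I\subset Z$, so $\mathcal H(Z)\ge \mathcal H(I)>0$, and by \eqref{eq:null sets of Hausdorff measure and capacity} also $\capa_1(Z)>0$. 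This pinpoints the error in your second paragraph: approximate continuity (or jump structure) at $x$ controls measure densities \emph{at} $x$, whereas $u^{\wedge}(y)$ for $y$ near $x$ is determined by densities at scales far smaller than $d(x,y)$, which are invisible from $x$; no density computation at $x$ can bound $\liminf_{y\to x}u^{\wedge}(y)$ from below.

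The example also shows that the proposition is genuinely weaker than ``lower semicontinuous outside an $\mathcal H$-null set,'' and that the exceptional open set $G$ need not contain $Z$ at all: here one takes $G$ to be a small open neighborhood of $\bigcup_n\overline{B_n}$, whose capacity is of order $\sum_n r_n$ --- small but necessarily positive --- and after its removal $u^{\wedge}$ becomes lower semicontinuous even at the points of $I$, which remain in $X\setminus G$. So the correct object to remove is the set of ``witnesses'' where the function dips at small scales, not the set of points where semicontinuity fails; making this quantitative requires capacitary (boxing-inequality type) estimates for level sets of the oscillation, which is what the actual proofs do. Note also that the paper itself does not reprove the result: it quotes \cite[Theorem 1.1]{LaSh}, whose Euclidean antecedent is \cite[Theorem 2.5]{CDLP}, and those arguments follow the route just described rather than any null-set mechanism.
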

\begin{proof}
This follows from \cite[Theorem 1.1]{LaSh}.
\end{proof}

The following fact clarifies the relationship between the different pointwise representatives.

\begin{proposition}\label{prop:Lebesgue points for Sobolev functions}
Let $u\in N^{1,1}(X)$. Then $u=u^{\wedge}=u^{\vee}$ $\mathcal H$-a.e.
\end{proposition}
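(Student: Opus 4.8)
The plan is to establish the two equalities $1$-quasieverywhere and then transfer them to $\mathcal{H}$-almost everywhere by means of \eqref{eq:null sets of Hausdorff measure and capacity}. Fix $\eps>0$. By the quasicontinuity of Newton--Sobolev functions, Theorem \ref{thm:quasicontinuity} with $p=1$, there is an open set $G\subset X$ with $\capa_1(G)<\eps$ such that $u|_{X\setminus G}$ is real-valued and continuous. I will also use throughout that $u=u^{\wedge}=u^{\vee}$ $\mu$-a.e. and that $u^{\wedge}\le u^{\vee}$ holds pointwise by the definitions of the approximate limits.

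The core of the argument is a pointwise computation at a point $x\in X\setminus G$ that is a point of $\mu$-density one for $X\setminus G$, so that $\lim_{r\to 0}\mu(B(x,r)\cap G)/\mu(B(x,r))=0$. Writing $a:=u(x)$ (finite, since $u|_{X\setminus G}$ is real-valued) and fixing $\delta>0$, continuity of $u|_{X\setminus G}$ yields $r_0>0$ with $u(y)<a+\delta$ for every $y\in B(x,r_0)\setminus G$, so that the superlevel set of the Newtonian representative obeys $\{u>a+\delta\}\cap B(x,r)\subset G\cap B(x,r)$ for $r<r_0$. Since $u^{\vee}$ depends only on the $\mu$-class of $u$, I may compute its defining densities with the Newtonian representative, and the density-one assumption gives
\[
\limsup_{r\to 0}\frac{\mu(\{u>a+\delta\}\cap B(x,r))}{\mu(B(x,r))}\le \lim_{r\to 0}\frac{\mu(G\cap B(x,r))}{\mu(B(x,r))}=0,
\]
whence $u^{\vee}(x)\le a+\delta$. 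The symmetric estimate for $\{u<a-\delta\}$ gives $u^{\wedge}(x)\ge a-\delta$. Letting $\delta\to 0$ and invoking $u^{\wedge}\le u^{\vee}$ forces $a\le u^{\wedge}(x)\le u^{\vee}(x)\le a$, i.e.\ $u^{\wedge}(x)=u^{\vee}(x)=u(x)$.

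The remaining, and genuinely delicate, point is to control the exceptional set, which is contained in $G$ together with the set $D$ of points of $X\setminus G$ at which $G$ has positive upper density; the set $D$ can carry positive $\mathcal{H}$-measure even though $\mu(D)=0$ by the Lebesgue density theorem, so only a capacitary (codimension-one) bound will do. The hard part is thus a weak-type estimate: choosing a near-optimal test function $v\in N^{1,1}(X)$ with $v\ge\ch_{G}$ and $\Vert v\Vert_{N^{1,1}(X)}<\eps$, one has $\vint{B(x,r)}v\,d\mu\ge \mu(B(x,r)\cap G)/\mu(B(x,r))$ for $x\in D$ along a sequence $r\to 0$, so $D$ lies in a small-scale maximal-function superlevel set of $v$, whose codimension-one Hausdorff content is bounded by $C\lambda^{-1}\Vert v\Vert_{N^{1,1}(X)}$ via a weak-type inequality that follows from the $(1,1)$-Poincar\'e inequality (in the same circle of ideas as the boxing inequality). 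I would then run this construction along $\eps=2^{-j}$, producing open sets $G_j$ and functions $v_j$, and apply a Borel--Cantelli argument twice: once to $(G_j)$ (using $\sum_j\capa_1(G_j)<\infty$ and countable subadditivity) to ensure $x\notin G_j$ with $u|_{X\setminus G_j}$ continuous at $x$ for all large $j$, and once to the maximal-function level sets at thresholds $2^{-j/2}$ to ensure the upper density of $G_j$ at $x$ tends to $0$. The latter upgrades the ``density $\le 2^{-j/2}$'' conclusion of the second paragraph to density zero in the limit, so the equalities hold at $\mathcal{H}$-a.e.\ $x$; by \eqref{eq:null sets of Hausdorff measure and capacity} the full exceptional set is $\mathcal{H}$-null, completing the proof.
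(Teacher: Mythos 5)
Your outer structure is sound, and it is genuinely different from the paper's: the paper disposes of this proposition in two lines by citing the Lebesgue point theorem for $N^{1,1}(X)$, see \cite[Theorem 4.1, Remark 4.2]{KKST2} --- at a Lebesgue point $x$ one has $\mu(\{|u-u(x)|>\delta\}\cap B(x,r))/\mu(B(x,r))\to 0$ for every $\delta>0$, hence $u(x)=u^{\wedge}(x)=u^{\vee}(x)$ --- and then invokes \eqref{eq:null sets of Hausdorff measure and capacity}. Your replacement of the Lebesgue point theorem by quasicontinuity (Theorem \ref{thm:quasicontinuity}) plus a density argument is correctly assembled: the superlevel-set inclusion $\{u>u(x)+\delta\}\cap B(x,r)\subset G_j\cap B(x,r)$ at points of continuity, the two Borel--Cantelli arguments, and in particular the observation that the density of $\{u>u(x)+\delta\}$ at a fixed good point is bounded by $\inf_j$ of the upper densities of the sets $G_j$, which upgrades ``upper density $\le 2^{-j/2}$'' to density zero, all work.

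The gap is exactly the step you label the hard part. The weak-type bound --- codimension-one Hausdorff content of $\{Mv>\lambda\}$ controlled by $C\lambda^{-1}\Vert v\Vert_{N^{1,1}(X)}$ --- is true under the standing assumptions, but it does not ``follow from the $(1,1)$-Poincar\'e inequality'' by any routine argument, and nothing stated in this paper yields it. The Poincar\'e inequality together with the usual maximal function theorem controls the $\mu$-measure of $\{Mv>\lambda\}$, and $\mu$-bounds say nothing about codimension-one content: in $\R^n$, a thin open neighborhood of a piece of a hyperplane has arbitrarily small measure but content bounded away from zero, so the passage from measure to content is the entire difficulty. That passage is what the boxing inequality accomplishes (via the coarea formula \eqref{eq:coarea}, the comparison \eqref{eq:def of theta}, and a covering argument), and the resulting maximal-function estimate is precisely \cite[Theorem 3.1]{KKST2}, or \cite[Lemma 3.1]{Mak} when $\mu(X)<\infty$ --- note that \cite{KKST2} assumes $\mu(X)=\infty$, a subtlety the paper itself must address and which is invisible in your sketch. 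So your proof re-routes through the main technical theorem of the very reference whose end result the paper cites, leaving it as an unproven black box; as written this is a genuine gap. With a precise citation to that estimate your argument closes, but at that point it is no shorter than citing the Lebesgue point theorem itself, which is what the paper does.
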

\begin{proof}
We know that $u$ has Lebesgue points $1$-q.e., that is,
\[
\lim_{r\to 0}\,\vint{B(x,r)}|u-u(x)|\,d\mu=0
\]
for $1$-q.e. $x\in X$, see \cite[Theorem 4.1, Remark 4.2]{KKST2}
(note that in \cite{KKST2} it is assumed that $\mu(X)=\infty$,
but this assumption can be avoided by using \cite[Lemma 3.1]{Mak}
instead of \cite[Theorem 3.1]{KKST2} in the proof of the Lebesgue point theorem).
It follows that $u(x)=u^{\wedge}(x)=u^{\vee}(x)$ for such $x$, and then
\eqref{eq:null sets of Hausdorff measure and capacity} completes the proof.
\end{proof}

Now we turn our attention to defining the class of $\BV$ functions with zero boundary values.
We recall that the Newton-Sobolev class with zero boundary values $N^{1,1}_0(D)$ consists of
the restrictions to $D$ of those functions $u\in N^{1,1}(X)$ with $u=0$ $1$-q.e. on $X\setminus D$, or equivalently
$\mathcal H$-a.e. on $X\setminus D$.
When dealing with $\BV$ functions, we need to consider both representatives $u^{\wedge}$ and $u^{\vee}$,
and thus we give the following definition.

\begin{definition}\label{def:BV functions with zero boundary values}
Let $D\subset X$.
We let
\[
\BV_0(D):=
\left\{u|_{D}:\,u\in\BV(X),\ u^{\wedge}(x)=u^{\vee}(x)=0\textrm{ for }\mathcal H\textrm{-a.e. }x\in X\setminus D\right\}.
\]
\end{definition}

Since $\BV(X)$ consists of $\mu$-equivalence classes of functions on $X$,
$\BV_0(D)$ consists of $\mu$-equivalence classes of functions on $D$.
For an open set $\Om\subset X$, the class $\BV_0(\Om)$ is a subclass of $\BV(\Om)$
(note that we have defined the class $\BV(\Om)$ only for open $\Om\subset X$).
If $u\in\BV_0(D)$ and $u=v|_D=w|_D$ for $v,w\in\BV(X)$
with $v^{\wedge}=v^{\vee}=w^{\wedge}=w^{\vee}=0$ $\mathcal H$-a.e. on $X\setminus D$,
then by Lebesgue's differentiation theorem, necessarily
$v=0=w$ $\mu$-a.e. on $X\setminus D$, and so $v$ and $w$ are the same $\BV$ function.
Thus for any $D\subset X$, the class $\BV_0(D)$ can also be understood to be a subclass of $\BV(X)$.
Most of the time we will in fact, without further notice, understand functions in $\BV_0(D)$
to be defined on the whole space.

Note that for $u\in N^{1,1}(X)$, requiring that $u=0$ $1$-q.e. on $X\setminus D$
is equivalent to requiring that $u^{\wedge}=u^{\vee}=0$ $\mathcal H$-a.e. on
$X\setminus D$,
due to \eqref{eq:null sets of Hausdorff measure and capacity} and
Proposition \ref{prop:Lebesgue points for Sobolev functions}. Thus our
definition of $\BV_0(D)$ is a close analog
of the definition of $N_0^{1,1}(D)$. In fact, since $N^{1,1}(X)\subset \BV(X)$ (recall \eqref{eq:Sobolev subclass BV}), we always have
\begin{equation}\label{eq:Newtonian zero class contained in BV zero}
N_0^{1,1}(D)\subset \BV_0(D).
\end{equation}

\begin{remark}\label{rem:definitions of zero boundary values}
Other definitions of $\BV_0(\Om)$ have been given in previous works, always for open $\Om\subset X$.
In \cite{HKL} the class was defined by requiring that $u=0$ on $X\setminus \Om$ (that is, 
$u=0$ $\mu$-a.e. on $X\setminus \Om$). This definition is convenient when solving Dirichlet
problems,
because the condition persists under $L^1$-limits.

By contrast, when considering functions of least gradient, i.e. \emph{local}
minimizers of the total variation in an open set $\Om$,
it is natural to consider test functions $\varphi\in\BV(\Om)$ satisfying
\[
\lim_{r\to 0}\,\vint{B(x,r)\cap\Om}|\varphi|\,d\mu=0\quad\textrm{for }\mathcal H\textrm{-a.e }x\in \partial\Om
\]
or
\[
\lim_{r\to 0}\,\frac{1}{\mu(B(x,r))}\int_{B(x,r)\cap\Om}|\varphi|\,d\mu=0\quad\textrm{for }\mathcal H\textrm{-a.e }x\in \partial\Om,
\]
see \cite[Section 9]{KLLS}.
The latter condition is very close to that of Definition
\ref{def:BV functions with zero boundary values}, but
here we are not assuming the function $\varphi$ to be in the class $\BV(X)$, only in $\BV(\Om)$.

For $1<p<\infty$,  there seems to be no ambiguity in how the class of Newton-Sobolev
functions with zero boundary values
ought to be defined, because the class $N_0^{1,p}(D)$ is closed under $L^p$-limits
of sequences that are bounded in the $\Vert \cdot\Vert_{N^{1,p}(X)}$-norm
(up to a choice of $\mu$-representative),
which is what one needs in the calculus of variations.
Our current definition of $\BV_0(D)$ does not have the same property,
but it is motivated by the fact that it is otherwise a close analog of
the definition of $N^{1,p}_0(D)$.
In the case $p>1$, it is fruitful to consider the class $N^{1,p}_0(D)$
for \emph{finely open} sets $D$,
e.g. when constructing \emph{p-strict subsets} with the help of a \emph{Cartan property},
see \cite[Lemma 3.3]{BBL-SS}.
We expect the class $\BV_0(D)$ to be similarly useful when extending these concepts to the
case $p=1$ in future work, see \cite{L3,L-FC,L-WC} for results so far.
\end{remark}

\begin{proposition}\label{prop:variation measure concentration for BV0}
Let $D\subset X$ and let $u\in\BV_0(D)$. Then $\Vert Du\Vert(X\setminus D)=0$.
\end{proposition}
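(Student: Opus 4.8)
The plan is to reduce the statement to the coarea formula \eqref{eq:coarea} together with the comparison \eqref{eq:def of theta} between perimeter and the codimension one Hausdorff measure. Identify $u$ with its extension in $\BV(X)$, so that $u^{\wedge}=u^{\vee}=0$ $\mathcal H$-a.e. on $X\setminus D$ and $\Vert Du\Vert(X)<\infty$. First I would pass to Borel sets. Let $Z:=\{u^{\wedge}=0\}\cap\{u^{\vee}=0\}$, which is Borel since $u^{\wedge}$ and $u^{\vee}$ are Borel functions, and let $N$ be the set of those $x\in X\setminus D$ for which $u^{\wedge}(x)\neq 0$ or $u^{\vee}(x)\neq 0$; by hypothesis $\mathcal H(N)=0$. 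Since $\mathcal H$ is a Borel regular (metric) outer measure, there is a Borel set $\widetilde N\supset N$ with $\mathcal H(\widetilde N)=0$. Then $X\setminus D\subset B:=Z\cup\widetilde N$, which is Borel, and by monotonicity of $\Vert Du\Vert(\cdot)$ it suffices to prove $\Vert Du\Vert(B)=0$.

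The heart of the argument is an elementary density computation showing that $Z\cap\partial^*\{u>t\}=\emptyset$ for every $t\neq 0$. Fix $x\in Z$ and $t\neq 0$. If $t>0$, then $u^{\vee}(x)=0$ together with the definition of $u^{\vee}$ gives $\lim_{r\to 0}\mu(\{u>t\}\cap B(x,r))/\mu(B(x,r))=0$, so the first condition in \eqref{eq:measure theoretic boundary} fails and $x\notin\partial^*\{u>t\}$. If $t<0$, then $u^{\wedge}(x)=0$ gives, for any $s$ with $t<s<0$, that $\{u<s\}$ has density zero at $x$; since $\{u\le t\}\subset\{u<s\}$, the set $\{u\le t\}=X\setminus\{u>t\}$ has density zero at $x$, so the second condition in \eqref{eq:measure theoretic boundary} fails and again $x\notin\partial^*\{u>t\}$. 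Thus for every $t\neq 0$ we have $\partial^*\{u>t\}\cap Z=\emptyset$, while trivially $\mathcal H(\partial^*\{u>t\}\cap\widetilde N)\le\mathcal H(\widetilde N)=0$. Hence
\[
\mathcal H\big(\partial^*\{u>t\}\cap B\big)=0\qquad\text{for a.e. }t\in\R.
\]

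To conclude, I would invoke the coarea formula. Since $\Vert Du\Vert(X)<\infty$ and $B\subset X$ is Borel, the remark following \eqref{eq:coarea} applies with $\Om=X$, giving $\Vert Du\Vert(B)=\int_{-\infty}^{\infty}P(\{u>t\},B)\,dt$. For a.e. $t$ we have $P(\{u>t\},X)<\infty$ (as $\int_{-\infty}^{\infty}P(\{u>t\},X)\,dt=\Vert Du\Vert(X)<\infty$), so the upper bound in \eqref{eq:def of theta} yields $P(\{u>t\},B)\le C_d\,\mathcal H(\partial^*\{u>t\}\cap B)=0$ for a.e. $t$. Integrating in $t$ gives $\Vert Du\Vert(B)=0$, and therefore $\Vert Du\Vert(X\setminus D)\le\Vert Du\Vert(B)=0$. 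The main obstacle is the density computation of the second paragraph: one must carefully translate the conditions $u^{\wedge}(x)=u^{\vee}(x)=0$ into the vanishing of the relevant densities at level sets $\{u>t\}$, handling the two signs of $t$ separately and using the nesting of superlevel and sublevel sets; the reduction to Borel sets (so that coarea applies) and the a.e.\ finiteness of the perimeters are the remaining technical points.
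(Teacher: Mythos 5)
Your proof is correct and takes essentially the same route as the paper's: the density computation showing that $\partial^*\{u>t\}$ avoids $\{u^{\wedge}=0\}\cap\{u^{\vee}=0\}$ for $t\neq 0$, the coarea formula \eqref{eq:coarea} applied on a Borel set, and the comparison \eqref{eq:def of theta}. The only packaging difference is that you absorb the $\mathcal H$-negligible exceptional set into a Borel $\mathcal H$-null set (via Borel regularity of $\mathcal H$) before applying coarea, whereas the paper handles that set by first noting that $\Vert Du\Vert$ is absolutely continuous with respect to $\mathcal H$ --- a fact proved from the very same two ingredients.
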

\begin{proof}
Let $x\in X\setminus D$ with $u^{\wedge}(x)=u^{\vee}(x)=0$.
It follows in a straightforward manner from the definitions that $x\notin \partial^*\{u>t\}$
for all $t\neq 0$;
recall the definition of the measure-theoretic boundary from \eqref{eq:measure theoretic boundary}.
By combining the coarea formula \eqref{eq:coarea}
and \eqref{eq:def of theta}, it is easy to show that $\Vert Du\Vert$
is absolutely continuous with respect to $\mathcal H$. By using this fact, the coarea formula
in the Borel set $\{u^{\wedge}=0\}\cap \{u^{\vee}=0\}$, and again \eqref{eq:def of theta}, we get
\begin{align*}
\Vert Du\Vert(X\setminus D)
&\le \Vert Du\Vert(\{u^{\wedge}=0\}\cap \{u^{\vee}=0\})\\
&=\int_{-\infty}^{\infty}P(\{u>t\},\{u^{\wedge}=0\}\cap \{u^{\vee}=0\})\,dt\\
&\le C_d \int_{-\infty}^{\infty}\mathcal H(\partial^*\{u>t\}\cap \{u^{\wedge}=0\}\cap \{u^{\vee}=0\})\,dt\\
&=0.
\end{align*}
\end{proof}

By the above proposition, it is natural to equip the space $\BV_0(D)$ with the norm
$\Vert \cdot\Vert_{\BV(X)}$.

It is well known that $\BV(X)$ is a Banach space.
The following proposition states that so is $\BV_0(D)$.

\begin{proposition}\label{prop:BV0 closed subspace}
Let $D\subset X$. Then $\BV_0(D)$ is a closed subspace of $\BV(X)$.
\end{proposition}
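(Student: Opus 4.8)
The plan is to show that $\BV_0(D)$ is a closed subspace of the Banach space $\BV(X)$, which amounts to verifying two things: that $\BV_0(D)$ is a linear subspace, and that it is closed under convergence in the $\BV$ norm. First I would dispense with the subspace property. Given $u,v\in\BV_0(D)$ and scalars, the sum $u+v$ lies in $\BV(X)$ by \eqref{eq:BV functions form vector space}, and the only issue is to check that the defining boundary condition is inherited. Since $u^{\wedge}=u^{\vee}=0$ and $v^{\wedge}=v^{\vee}=0$ hold $\mathcal H$-a.e. on $X\setminus D$, outside an $\mathcal H$-null set both $u$ and $v$ have approximate limit $0$ at each point of $X\setminus D$; it follows directly from the definitions of $u^\wedge,u^\vee$ that at such points $(u+v)^{\wedge}=(u+v)^{\vee}=0$ as well, and scalar multiples behave the same way. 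Hence $\BV_0(D)$ is a linear subspace.

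The main content is closedness. Suppose $u_i\in\BV_0(D)$ with $u_i\to u$ in $\BV(X)$; since $\BV(X)$ is complete we already have $u\in\BV(X)$, and the task is to verify that $u$ satisfies the boundary condition, i.e. that $u^{\wedge}=u^{\vee}=0$ $\mathcal H$-a.e. on $X\setminus D$. Here the key tool is Lemma \ref{lem:norm and pointwise convergence}: by passing to a subsequence (not relabeled), we obtain $u_i^{\wedge}\to u^{\wedge}$ and $u_i^{\vee}\to u^{\vee}$ $\mathcal H$-a.e. For each $i$ we know $u_i^{\wedge}=u_i^{\vee}=0$ $\mathcal H$-a.e. on $X\setminus D$, so the union over all $i$ of the exceptional sets, together with the exceptional set for the pointwise convergence, is still $\mathcal H$-null (countable subadditivity of $\mathcal H$). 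Off this single $\mathcal H$-null subset of $X\setminus D$, we have $u^{\wedge}(x)=\lim_i u_i^{\wedge}(x)=0$ and likewise $u^{\vee}(x)=\lim_i u_i^{\vee}(x)=0$. Thus $u\in\BV_0(D)$.

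The step I expect to be the crux is the application of Lemma \ref{lem:norm and pointwise convergence}, since everything hinges on extracting a subsequence along which the representatives $u_i^{\wedge},u_i^{\vee}$ converge $\mathcal H$-a.e.; this is precisely where the quasi-semicontinuity theory feeding into that lemma does the real work. The bookkeeping with countably many $\mathcal H$-null sets is routine once that lemma is in hand, and the linearity check is elementary. One subtlety worth stating explicitly is that closedness is verified for the subsequence, but since the original sequence converges in norm to the fixed limit $u$, showing $u\in\BV_0(D)$ for the limit of a subsequence suffices to conclude $u\in\BV_0(D)$ for the original sequence as well.
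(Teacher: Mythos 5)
Your proof is correct and takes essentially the same route as the paper: the paper's own argument likewise notes that the vector-space property is easy to check and then deduces closedness directly from Lemma \ref{lem:norm and pointwise convergence}. You have simply filled in the details (the approximate-limit argument for sums, the countable union of $\mathcal H$-null sets, and the subsequence remark) that the paper leaves implicit.
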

\begin{proof}
It is easy to check that $\BV_0(D)$ is a vector space.
Consider a sequence $(u_i)\subset\BV_0(D)$ with $u_i\to u$ in $\BV(X)$. Then it follows from
Lemma \ref{lem:norm and pointwise convergence} that also $u\in\BV_0(D)$.
\end{proof}

Besides $\BV$ functions with zero boundary values, we wish to consider compactly supported $\BV$
functions.
The support of a function $u$ on $X$ is the closed set
\[
\supp u:=\{x\in X:\,\mu(B(x,r)\cap\{u\neq 0\})>0\ \textrm{for all }r>0\}.
\]
Moreover, the positive and negative parts of a function $u$ are
$u_+:=\max\{u,0\}$ and $u_-:=-\min\{u,0\}$.

In the following theorem, we show that $\BV_0(D)$ is the closure of compactly supported functions in the
$\BV$ norm. A similar result has been given previously in \cite[Theorem 6.9]{LaSh2}, but only for open
$D\subset X$, and with additional assumptions either on the space or on the boundary of $D$.

\begin{theorem}\label{thm:characterization of BV function with zero bdry values}
Let $D \subset X$ and let $u\in\BV(X)$.
Then the following are equivalent:
\begin{enumerate}[{(1)}]
\item $u\in \BV_0(D)$.
\item There exists a sequence $(u_k)\subset\BV(X)$ such that $\supp u_k$ is a compact subset of $D$ for each $k\in\N$, and
$u_k\to u$ in $\BV(X)$.
\end{enumerate}
\end{theorem}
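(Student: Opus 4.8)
The two implications are of very different character, so I would handle them separately. For the implication $(2)\Rightarrow(1)$ the plan is to check that each $u_k$ already belongs to $\BV_0(D)$ and then invoke completeness. Since $\supp u_k$ is a compact subset of $D$, the open set $X\setminus\supp u_k$ contains $X\setminus D$ and $u_k=0$ $\mu$-a.e. on it; hence $u_k^{\wedge}=u_k^{\vee}=0$ at every point of $X\setminus\supp u_k$, in particular (indeed everywhere, not merely $\mathcal H$-a.e.) on $X\setminus D$. Thus $u_k\in\BV_0(D)$, and since $\BV_0(D)$ is a closed subspace of $\BV(X)$ by Proposition \ref{prop:BV0 closed subspace} and $u_k\to u$ in $\BV(X)$, we conclude $u\in\BV_0(D)$.

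The substance is in $(1)\Rightarrow(2)$. First I would reduce to a nonnegative bounded $u$. Splitting $u=u_+-u_-$ and using that approximate limits transform as $(u_+)^{\wedge}=(u^{\wedge})_+$, $(u_-)^{\vee}=(u^{\wedge})_-$, and so on, one checks via Lemma \ref{lem:BV functions form algebra} that $u_\pm\in\BV_0(D)$; approximating each part and subtracting reduces matters to $u\ge 0$. A truncation $\min\{u,M\}\to u$ in $\BV(X)$, immediate from the coarea formula \eqref{eq:coarea}, then reduces to $0\le u\le M$. For such $u$ I would also use that $v_\delta:=(u-\delta)_+\to u$ in $\BV(X)$ as $\delta\to 0$, again by \eqref{eq:coarea}, so that by a diagonal argument it suffices, for each fixed $\delta>0$, to approximate $v_\delta$ in $\BV(X)$ by functions with compact support in $D$.

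The heart of the proof is cutting $v_\delta$ down to a function supported in $D$. Set $N:=(X\setminus D)\cap\{u^{\vee}>0\}$; by hypothesis $\mathcal H(N)=0$, so $\capa_1(N)=0$ by \eqref{eq:null sets of Hausdorff measure and capacity}. Given $\eps>0$, I would apply the quasi-semicontinuity of Proposition \ref{prop:quasisemicontinuity} to get an open $G_0$ with $\capa_1(G_0)<\eps$ on whose complement $u^{\vee}$ is upper semicontinuous, and enlarge it to $G:=G_0\cup V$, where $V\supset N$ is open with $\capa_1(V)<\eps$ (such $V$ exists since $\capa_1$ is an outer capacity, see \cite[Theorem 5.31]{BB}). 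The key point is that
\[
U_\delta:=\{u^{\vee}<\delta\}\cup G
\]
is then open and contains $X\setminus D$: openness follows from the upper semicontinuity of $u^{\vee}$ on $X\setminus G$, and $X\setminus D\subset U_\delta$ because every $x\in(X\setminus D)\setminus G$ avoids $N$ and hence satisfies $u^{\vee}(x)=0<\delta$. Choosing $\psi\in N^{1,1}(X)\subset\BV(X)$ (recall \eqref{eq:Sobolev subclass BV}) with $0\le\psi\le1$, $\psi=1$ on $G$, and $\Vert\psi\Vert_{N^{1,1}(X)}$ as small as we wish, I would set $w:=(v_\delta-M\psi)_+$. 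Then $w\in\BV(X)$ by Lemma \ref{lem:BV functions form algebra} and \eqref{eq:BV functions form vector space}, and $w=0$ $\mu$-a.e. on $U_\delta$: on $G$ because $\psi=1$ and $v_\delta\le M$, and on $\{u^{\vee}<\delta\}\setminus G$ because $v_\delta=0$ a.e. there (as $u=u^{\vee}<\delta$ a.e.). Since $U_\delta$ is open and contains $X\setminus D$, this forces $\supp w\subset X\setminus U_\delta\subset D$.

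It remains to show $w\to v_\delta$ in $\BV(X)$ as $\psi\to 0$, and this is the step I expect to be the main obstacle: a naive bound on the variation leaves the large term $\Vert Dv_\delta\Vert(X)$ uncancelled. Writing $v_\delta-w=\min\{v_\delta,M\psi\}$, the $L^1$ part is at most $M\Vert\psi\Vert_{L^1(X)}$, while Lemma \ref{lem:BV functions form algebra} applied to $v_\delta$ and $M\psi$ gives
\[
\Vert D\min\{v_\delta,M\psi\}\Vert(X)\le \Vert Dv_\delta\Vert(X)+M\Vert D\psi\Vert(X)-\Vert D\max\{v_\delta,M\psi\}\Vert(X).
\]
Since $\max\{v_\delta,M\psi\}\to v_\delta$ in $L^1(X)$ as $\psi\to0$, lower semicontinuity of the total variation yields $\liminf\Vert D\max\{v_\delta,M\psi\}\Vert(X)\ge\Vert Dv_\delta\Vert(X)$, and together with $M\Vert D\psi\Vert(X)\to0$ this forces $\Vert D\min\{v_\delta,M\psi\}\Vert(X)\to0$; hence $w\to v_\delta$ in $\BV(X)$ along a sequence $\psi=\psi_j\to0$. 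Finally, to upgrade the closed inclusion $\supp w\subset D$ to a compact one, I would use that $X$ is proper and multiply by Lipschitz cutoffs $\eta_R$ equal to $1$ on $B(x_0,R)$ and vanishing outside $B(x_0,2R)$: since $w\in L^1(X)$, the functions $\min\{w,M\eta_R\}$ have compact support contained in $D$ and converge to $w$ in $\BV(X)$ as $R\to\infty$ by a standard cut-off estimate. Diagonalizing over $\delta\to0$, $\psi_j\to0$, and $R\to\infty$ then produces the required sequence.
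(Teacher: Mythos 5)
Your direction $(2)\Rightarrow(1)$ is correct and essentially the paper's argument (the paper inlines it via Lemma \ref{lem:norm and pointwise convergence} rather than citing Proposition \ref{prop:BV0 closed subspace}, but the content is the same). The core of your $(1)\Rightarrow(2)$ is also correct, and it takes a genuinely different route at the key step: the paper multiplies by $1-w_j$ and controls $\Vert D\bigl(w_j(u-\eps)_+\bigr)\Vert(X)$ via the Leibniz rule for bounded $\BV$ functions \cite[Proposition 4.2]{KKST3}, together with the absolute continuity of $\Vert D(u-\eps)_+\Vert$ with respect to $\mathcal H$ and the $\mathcal H$-a.e.\ convergence $w_j^{\vee}\to 0$ along a subsequence; you instead subtract $M\psi_j$ and truncate, writing $v_\delta-w_j=\min\{v_\delta,M\psi_j\}$, and deduce $\Vert D\min\{v_\delta,M\psi_j\}\Vert(X)\to 0$ from Lemma \ref{lem:BV functions form algebra} plus lower semicontinuity applied to $\max\{v_\delta,M\psi_j\}\to v_\delta$ in $L^1(X)$. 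That cancellation trick is more elementary (no product rule, no $\mathcal H$-a.e.\ subsequence extraction) and is a nice alternative. One small repair there: for a \emph{fixed} $G$ the infimum of $\Vert \psi\Vert_{N^{1,1}(X)}$ over admissible $\psi$ equals $\capa_1(G)$, so you cannot take $\psi$ ``as small as we wish''; you must let $G=G_j$ shrink, $\capa_1(G_j)\to 0$ with $N\subset G_j$, and let $U_\delta=U_{\delta,j}$ vary accordingly --- harmless, since your limit is taken along a sequence anyway.

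The genuine gap is your final step, upgrading closed support to compact support. The claimed convergence $\min\{w,M\eta_R\}\to w$ in $\BV(X)$ ``by a standard cut-off estimate'' is unjustified: the standard estimate, namely the Leibniz rule \cite[Lemma 3.2]{HKLS} that the paper uses for exactly this purpose, applies to the \emph{product} $\eta_R w$, not to the minimum. Moreover your own cancellation technique provably fails here: in Lemma \ref{lem:BV functions form algebra} the error term $M\Vert D\eta_R\Vert(X)$ is of order $\tfrac{M}{R}\,\mu\bigl(B(x_0,2R)\setminus B(x_0,R)\bigr)$, which need not vanish (in $\R^n$ it grows like $R^{n-1}$), and $\max\{w,M\eta_R\}$ does not converge to $w$ in $L^1(X)$, since $\Vert \max\{w,M\eta_R\}-w\Vert_{L^1(X)}\ge M\mu(B(x_0,R))-\Vert w\Vert_{L^1(X)}\to\infty$ when $X$ is unbounded (the only case in which this step is needed); so no lower-semicontinuity cancellation is available. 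The fix is immediate: either replace $\min\{w,M\eta_R\}$ by $\eta_R w$, for which \cite[Lemma 3.2]{HKLS} gives $\Vert D(\eta_R w-w)\Vert(X)\le \Vert Dw\Vert(X\setminus B(x_0,R))+\int_{X\setminus B(x_0,R)}|w|\,g_{\eta_R}\,d\mu\to 0$ because $\Vert Dw\Vert$ is a finite measure and $w\in L^1(X)$; or perform this reduction at the very beginning, as the paper does --- once $u$ has compact support in $X$, the set $X\setminus U_\delta\subset \{u^{\vee}\ge \delta\}\subset \supp u$ is automatically compact and the last step disappears.
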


\begin{proof}
\hfill\\
\noindent $(1)\implies(2)$: Fix $x_0\in X$
and let $\eta_j(x):=(1-\dist(x,B(x_0,j)))_+$, $j\in\N$, so that each $\eta_j$ is a
$1$-Lipschitz function with $\eta_j=1$ on $B(x_0,j)$ and $\eta_j=0$ outside $B(x_0,j+1)$.
By a suitable Leibniz rule, see \cite[Lemma 3.2]{HKLS}, we have
\[
\Vert D(\eta_j u-u)\Vert(X)\le \Vert Du\Vert(X\setminus B(x_0,j))+\int_{X\setminus B(x_0,j)}|u|\,d\mu\to 0
\]
as $j\to \infty$. Thus we can assume that $u$ is compactly supported (in $X$).
Since $u_+$ and $u_-$ both belong to $\BV_0(D)$ and $u=u_+-u_-$,
we can assume that $u\ge 0$.
Finally, by using the coarea formula \eqref{eq:coarea} it is easy to check that
$\Vert \min\{u,j\}-u\Vert_{\BV(X)}\to 0$ as $j\to\infty$, and so we can also assume that
$u$ is bounded.

Note that for $\eps>0$, by the coarea formula
\begin{align*}
\Vert D(u-(u-\eps)_+)\Vert (X)
&=\Vert D\min\{u,\eps\}\Vert (X)\\
&=\int_{-\infty}^{\infty}P(\min\{u,\eps\}>t\},X)\,dt\\
&=\int_{0}^{\eps}P(\{u>t\},X)\,dt\\
&\to 0\quad\textrm{as }\eps\to 0.
\end{align*}
Clearly also $(u-\eps)_+\to u$ in $L^1(X)$ as $\eps\to 0$.
Fix $\eps>0$.
By Proposition \ref{prop:quasisemicontinuity}
there exist open sets $G_j\subset X$ such that $\capa_1(G_j)\to 0$ and $u^{\vee}|_{X\setminus G_j}$ is
upper semicontinuous for each $j\in\N$.
Since $\mathcal H(\{u^{\vee}>0\}\setminus D)=0$ and thus $\capa_1(\{u^{\vee}>0\}\setminus D)=0$
by \eqref{eq:null sets of Hausdorff measure and capacity}, we can assume that
$\{u^{\vee}>0\}\setminus D\subset G_j$ for each $j\in\N$ (recall also that $\capa_1$ is an outer
capacity).
For any fixed $j\in\N$, since $\{u^{\vee}<\eps\}$ is an open set in the subspace topology of $X\setminus G_j$,
there exists an open set $W\subset X$ such that
\[
W\setminus G_j=\{u^{\vee}<\eps\}\setminus G_j,
\]
and thus $V_j:=\{u^{\vee}<\eps\}\cup G_j=W\cup G_j$ is an open set.
Note that $X\setminus D\subset V_j$, and $X\setminus V_j$ is bounded by the fact that $u$ has
compact support in $X$, so in conclusion $X\setminus V_j$ is a bounded subset of $D$ (in fact, compact).

There exist functions
$w_j\in N^{1,1}(X)$ such that $0\le w_j\le 1$ on $X$, $w_j= 1$ on $G_j$,
and $\Vert w_j\Vert_{N^{1,1}(X)}\to 0$.
Then also $\Vert w_j\Vert_{\BV(X)}\to 0$ by \eqref{eq:Sobolev subclass BV}.
By Proposition \ref{lem:norm and pointwise convergence}, we can extract a subsequence
(not relabeled) such that $w_j^{\vee}(x)\to 0$ for $\mathcal H$-a.e.
$x\in X$.
Let $u_{\eps,j}:=(1-w_j)(u-\eps)_+$, $j\in\N$. Note that $u_{\eps,j}\ge 0$.
Clearly $u_{\eps,j}=0$ on $G_j$ and on $\{u^{\vee}<\eps\}$.
Thus $u_{\eps,j}=0$ in the open set $V_j$, and it follows that
$\supp u_{\eps,j} \subset X\setminus V_{j}$.
Since $X\setminus V_j$ is a bounded subset of $D$, $\supp u_{\eps,j} $ is a compact subset of $D$,
as desired.

Using the Leibniz rule for bounded $\BV$ functions, see
\cite[Proposition 4.2]{KKST3}, we get for some constant $C=C(C_d,C_P,\lambda)\ge 1$
\begin{align*}
\Vert D(u_{\eps,j}-(u-\eps)_+)\Vert(X)
&=\Vert D(w_j(u-\eps)_+)\Vert(X)\\
&\le C\int_X(u-\eps)_+^{\vee}\,d\Vert Dw_j\Vert+C\int_X w_j^{\vee}\,d\Vert D(u-\eps)_+\Vert\\
&\le C\Vert u\Vert_{L^{\infty}(X)}\Vert Dw_j\Vert(X)+C\int_X w_j^{\vee}\,d\Vert D(u-\eps)_+\Vert.
\end{align*}
Here the first term goes to zero since $\Vert w_j\Vert_{\BV(X)}\to 0$, and the second term goes to zero
by Lebesgue's dominated convergence theorem, since $\Vert D(u-\eps)_+\Vert$ is absolutely continuous
with respect to $\mathcal H$. Clearly also
\[
u_{\eps,j}\to(u-\eps)_+\quad\textrm{in }L^1(X)\ \textrm{ as }j\to\infty.
\]
Since we had $(u-\eps)_+\to u$ in $\BV(X)$ as $\eps\to 0$,
by a diagonal argument we can choose numbers $\eps_k\searrow 0$ and indices $j_k\to\infty$ to obtain a sequence $u_k:=u_{\eps_k,j_k}$ such that $u_k\to u$ in $\BV(X)$.\\

\noindent$(2)\implies(1)$: Take a sequence of functions $u_k\in\BV(X)$ such that $\supp u_k$
are compact subsets of $D$ and
$u_k\to u$ in $\BV(X)$.
By Lemma \ref{lem:norm and pointwise convergence} and by passing to a subsequence (not relabeled),
we have $u_k^{\wedge}(x)\to u^{\wedge}(x)$ and $u_k^{\vee}(x)\to u^{\vee}(x)$ for
$\mathcal H$-a.e. $x\in X$. Since clearly $u_k^{\wedge}=u_k^{\vee}=0$ on
$X\setminus \supp u_k\supset X\setminus D$,
also $u^{\wedge}(x)=u^{\vee}(x)=0$ for $\mathcal H$-a.e. $x\in X\setminus D$,
and so $u\in\BV_0(D)$.
\end{proof}

\begin{remark}
The proof of the implication $(1)\implies(2)$ essentially follows along the lines of
the proof of an analogous result for Newton-Sobolev functions given in \cite[Section 5.4]{BB},
but instead of the quasicontinuity of Newton-Sobolev functions
we use the quasi-semicontinuity of the representative $u^{\vee}$.
\end{remark}

\begin{lemma}\label{lem:extension of compactly supported function}
Let $\Om\subset X$ be an open set and let $u\in\BV(\Om)$ such that $\supp u$
is a compact subset of
$\Om$. Then there exists a sequence $(u_i)\subset\Lip_c(\Om)$ such that $u_i\to u$ in
$L^1(\Om)$ and
\[
\Vert Du\Vert(\Om)=\lim_{i\to\infty}\int_{\Om}g_{u_i}\,d\mu,
\]
where each $g_{u_i}$ is the minimal $1$-weak upper gradient of $u_i$ in $\Om$.
It follows that $u\in\BV_0(\Om)$, and then the above holds also with $\Om$ replaced by $X$.
\end{lemma}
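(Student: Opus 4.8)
The plan is to build the approximating sequence by multiplying the locally Lipschitz functions furnished by the definition of the total variation with a fixed Lipschitz cutoff adapted to the compact support of $u$, and then to check that the cutoff contributes no gradient in the limit, precisely because $u$ vanishes on the region where the cutoff is non-constant. Write $K:=\supp u$, a compact subset of $\Om$. Since $X$ is proper and $\Om$ is open, I would fix $\delta>0$ so small that $F:=\{x\in X:\dist(x,K)\le 2\delta\}$ is a compact subset of $\Om$, and take a $(1/\delta)$-Lipschitz cutoff $\eta$ with $0\le\eta\le 1$, $\eta=1$ on $\{\dist(\cdot,K)\le\delta\}$, and $\eta=0$ outside $F$, so that $g_\eta\le(1/\delta)\ch_A$ for the annular set $A:=\{\delta<\dist(\cdot,K)<2\delta\}\subset\Om\setminus K$. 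By the definition of $\Vert Du\Vert(\Om)$ I choose $(v_i)\subset\liploc(\Om)$ with $v_i\to u$ in $L^1_{\loc}(\Om)$ and $\int_\Om g_{v_i}\,d\mu\to\Vert Du\Vert(\Om)$, and set $u_i:=\eta v_i$. Each $u_i$ is Lipschitz with support in $F$, hence $u_i\in\Lip_c(\Om)$.

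Next I would verify the two required convergences. For the $L^1$ convergence, note that $u_i$ and $u$ are all supported in the fixed compact set $F$, on which $L^1_{\loc}$-convergence is genuine $L^1$-convergence; since $\eta=1$ on a neighborhood of $K$ and $u=0$ a.e. on $\Om\setminus K$, a short estimate splitting $\Om$ into $\{\dist(\cdot,K)\le\delta\}$ and $A$ gives $u_i\to u$ in $L^1(\Om)$. For the variation, the product rule for upper gradients of locally Lipschitz functions yields $g_{u_i}\le\eta g_{v_i}+|v_i|g_\eta$ a.e., so
\[
\int_\Om g_{u_i}\,d\mu\le\int_\Om g_{v_i}\,d\mu+\frac{1}{\delta}\int_A|v_i|\,d\mu.
\]
The cutoff error term tends to $\tfrac1\delta\int_A|u|\,d\mu=0$, because $v_i\to u$ in $L^1(F)$, $A\subset F$, and $u=0$ a.e. on $A\subset\Om\setminus K$; hence $\limsup_i\int_\Om g_{u_i}\,d\mu\le\lim_i\int_\Om g_{v_i}\,d\mu=\Vert Du\Vert(\Om)$. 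Conversely, since $u_i\to u$ in $L^1_{\loc}(\Om)$ and $u_i\in\liploc(\Om)$, the very definition of the total variation gives $\Vert Du\Vert(\Om)\le\liminf_i\int_\Om g_{u_i}\,d\mu$. Combining the two bounds yields $\lim_i\int_\Om g_{u_i}\,d\mu=\Vert Du\Vert(\Om)$, as claimed.

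For the two concluding assertions I would pass to the zero extensions. Each $u_i$, extended by zero, lies in $\Lip_c(X)\subset N^{1,1}(X)\subset\BV(X)$, and since its minimal $1$-weak upper gradient is local and $u_i$ vanishes near $X\setminus\Om$, the minimal upper gradient in $X$ agrees with $g_{u_i}$ on $\Om$ and vanishes elsewhere, so $\int_X g_{u_i}\,d\mu=\int_\Om g_{u_i}\,d\mu\to\Vert Du\Vert(\Om)$. The extended $u_i$ converge in $L^1(X)$ to the zero extension $\tilde u$ of $u$, whence $\tilde u\in\BV(X)$ with $\Vert D\tilde u\Vert(X)\le\liminf_i\int_X g_{u_i}\,d\mu=\Vert Du\Vert(\Om)$ by lower semicontinuity; restricting the variation measure to the open set $\Om$ gives the reverse inequality, so $\Vert D\tilde u\Vert(X)=\Vert Du\Vert(\Om)=\lim_i\int_X g_{u_i}\,d\mu$, which is exactly the statement with $\Om$ replaced by $X$. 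Finally, $\tilde u=0$ a.e. on the open set $X\setminus K\supset X\setminus\Om$, so $\tilde u^{\wedge}=\tilde u^{\vee}=0$ everywhere on $X\setminus\Om$, and hence $u=\tilde u|_\Om\in\BV_0(\Om)$ directly from Definition \ref{def:BV functions with zero boundary values}. I expect the main obstacle to be the variation estimate: obtaining the sharp upper bound $\limsup_i\int_\Om g_{u_i}\,d\mu\le\Vert Du\Vert(\Om)$ hinges on the Leibniz bound together with the observation that the transition annulus $A$ lies in $\{u=0\}$ and has compact closure in $\Om$, after which one must lean on lower semicontinuity to upgrade the inequality to an exact limit; the locality of the minimal upper gradient used in the $X$-versus-$\Om$ comparison is the other point requiring care.
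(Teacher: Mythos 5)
Your proof is correct, and its second half (zero extension, locality of the minimal $1$-weak upper gradient so that $\int_X g_{u_i}\,d\mu=\int_\Om g_{u_i}\,d\mu$, lower semicontinuity of the total variation, restriction to $\Om$ for the reverse inequality, and reading off $\tilde u^{\wedge}=\tilde u^{\vee}=0$ from the vanishing of $\tilde u$ a.e.\ on the open set $X\setminus K$) is essentially the paper's own argument. The genuine difference is in the first claim: the paper disposes of it with a citation to \cite[Lemma 2.6]{HaSh}, whereas you prove it from scratch by multiplying the locally Lipschitz sequence $(v_i)$ furnished by the definition of $\Vert Du\Vert(\Om)$ with a fixed cutoff $\eta$, and exploiting that the transition annulus lies in $\Om\setminus\supp u$, where $u=0$ a.e., so the Leibniz error term $\delta^{-1}\int_A |v_i|\,d\mu$ tends to $0$; lower semicontinuity (i.e.\ the definition of the total variation applied to $(u_i)$) then upgrades the $\limsup$ bound to an exact limit. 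Your route buys self-containedness --- it is, in substance, the argument behind the cited lemma --- while the paper's buys brevity. Three details worth a sentence each in a final write-up: (i) the gradient bound should a priori be $g_\eta\le\delta^{-1}\ch_{\overline A}$ with the \emph{closed} annulus $\overline A=\{\delta\le\dist(\cdot,K)\le 2\delta\}$, unless you invoke the fact that minimal weak upper gradients vanish a.e.\ where the function is constant; either version suffices, since $u=0$ a.e.\ on $\overline A\subset\Om\setminus K$; (ii) the assertion that $u_i=\eta v_i$ is globally Lipschitz, not merely locally Lipschitz with compact support, needs a word --- a finite cover of $\supp\eta$ by balls on which $v_i$ is Lipschitz, together with the boundedness of $u_i$, does it; (iii) the step ``$u=0$ a.e.\ on $\Om\setminus K$'' passes from the pointwise definition of $\supp u$ to an a.e.\ statement, which uses that $X$ is separable (true here, since $\mu$ is doubling), so that $X\setminus K$ is covered by countably many balls on which $u$ vanishes a.e.
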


\begin{proof}
The first claim is proved in \cite[Lemma 2.6]{HaSh}. To prove the second claim,
denote by $u,u_i$ also the zero extensions of these functions.
Note that the minimal $1$-weak upper gradient $g_{u_i}$ (now as a function defined on $X$)
is clearly the zero extension of $g_{u_i}$ (as a function defined only on $\Om$),
and so we have
\[
\Vert Du\Vert(X)\le \liminf_{i\to\infty}\int_X g_{u_i}\,d\mu=\liminf_{i\to\infty}\int_{\Om} g_{u_i}\,d\mu=\Vert Du\Vert(\Om).
\]
Thus $u\in\BV(X)$ and then clearly $u\in\BV_0(\Om)$.
\end{proof}

Suppose $(u_i)\subset \liploc(X)$ with $u_i\to u$ in $\BV(X)$.
Then $(u_i)$ is a Cauchy sequence in $\BV(X)$, and by \cite[Remark 4.7]{HKLL},
$(u_i)$ is a Cauchy sequence also in $N^{1,1}(X)$. Since $N^{1,1}(X)/\sim$
is a Banach space with the equivalence relation
$u\sim v$ if $\Vert u-v\Vert_{N^{1,1}(X)}=0$, see \cite[Theorem 1.71]{BB},
we conclude that $u\in N^{1,1}(X)$. Thus, for an open set $\Om\subset X$,
$\Lip_c(\Om)$ cannot be dense in $\BV_0(\Om)\supsetneq N_0^{1,1}(\Om)$.
On the other hand, compactly supported Lipschitz
functions are dense in $\BV_0(\Om)$ in the following weak sense.

\begin{proposition}\label{prop:weak density of lipschitz functions}
Let $\Om\subset X$ be an open set and let $u\in\BV_0(\Om)$. Then there exists a sequence
$(u_i)\subset \Lip_c(\Om)$ with $u_i\to u$ in $L^1(X)$ (with the understanding that the functions $u_i$
are extended outside $\Om$ by zero) and
\[
\int_{X}g_{u_i}\,d\mu\to\Vert Du\Vert(X)\quad\textrm{as }i\to\infty.
\]
\end{proposition}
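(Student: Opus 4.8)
The plan is to construct the Lipschitz approximants in two stages and then glue them together by a diagonal argument. First I would use the characterization already established: since $u\in\BV_0(\Om)$, Theorem \ref{thm:characterization of BV function with zero bdry values} provides a sequence $(u_k)\subset\BV(X)$ with each $\supp u_k$ a compact subset of $\Om$ and $u_k\to u$ in $\BV(X)$. In particular $u_k\to u$ in $L^1(X)$, and from the triangle inequality \eqref{eq:BV functions form vector space} we get $|\,\Vert Du_k\Vert(X)-\Vert Du\Vert(X)\,|\le\Vert D(u_k-u)\Vert(X)\to 0$, so that $\Vert Du_k\Vert(X)\to\Vert Du\Vert(X)$. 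Thus it suffices to approximate each compactly supported $u_k$ in the required sense.

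For each fixed $k$, the restriction $u_k|_{\Om}$ lies in $\BV(\Om)$, because $\Vert Du_k\Vert(\Om)\le\Vert Du_k\Vert(X)<\infty$, and it has compact support in $\Om$. Hence Lemma \ref{lem:extension of compactly supported function} applies and yields, for each $k$, a sequence $(u_{k,i})_{i}\subset\Lip_c(\Om)$ with $u_{k,i}\to u_k$ in $L^1(\Om)$ and $\Vert Du_k\Vert(\Om)=\lim_{i\to\infty}\int_{\Om}g_{u_{k,i}}\,d\mu$; moreover, by the final assertion of that lemma, after extending these functions by zero we have $u_{k,i}\to u_k$ in $L^1(X)$ and $\int_X g_{u_{k,i}}\,d\mu\to\Vert Du_k\Vert(X)$ as $i\to\infty$. (Here the zero extension of $u_k|_{\Om}$ is nothing but $u_k$, since $u_k$ vanishes a.e. on the open set $X\setminus\supp u_k\supset X\setminus\Om$, so that $\Vert Du_k\Vert(\Om)=\Vert Du_k\Vert(X)$ and no variation is lost across $\partial\Om$.)

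Finally I would diagonalize: for each $k$ choose an index $i_k$ so large that $\Vert u_{k,i_k}-u_k\Vert_{L^1(X)}<1/k$ and $\big|\int_X g_{u_{k,i_k}}\,d\mu-\Vert Du_k\Vert(X)\big|<1/k$, and set $u_k':=u_{k,i_k}\in\Lip_c(\Om)$. Combining these with the convergences $u_k\to u$ in $L^1(X)$ and $\Vert Du_k\Vert(X)\to\Vert Du\Vert(X)$ from the first step gives $u_k'\to u$ in $L^1(X)$ and $\int_X g_{u_k'}\,d\mu\to\Vert Du\Vert(X)$, which is exactly the claim (after relabeling the sequence). There is no deep obstacle beyond organizing this two-stage scheme: the structural content is supplied entirely by Theorem \ref{thm:characterization of BV function with zero bdry values} and Lemma \ref{lem:extension of compactly supported function}. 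The one point demanding care is that one must control the $L^1$-error and the energy $\int_X g_{u_{k,i}}\,d\mu$ \emph{simultaneously}, which the diagonal choice of $i_k$ achieves. It is worth stressing that the energies converge, rather than merely obeying a lower-semicontinuity inequality, precisely because Lemma \ref{lem:extension of compactly supported function} furnishes the equality $\Vert Du_k\Vert(X)=\lim_i\int_X g_{u_{k,i}}\,d\mu$; this is what forces the limiting energy to equal $\Vert Du\Vert(X)$ exactly.
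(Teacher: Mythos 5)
Your proof is correct and follows essentially the same route as the paper: approximate $u$ in the $\BV(X)$ norm by compactly supported $\BV$ functions via Theorem \ref{thm:characterization of BV function with zero bdry values}, then approximate each of those by $\Lip_c(\Om)$ functions via the final assertion of Lemma \ref{lem:extension of compactly supported function}, and combine the errors. The paper merely compresses your diagonal step by choosing both approximations with error $1/i$ at once, so the two arguments are the same in substance.
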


\begin{proof}
By Theorem \ref{thm:characterization of BV function with zero bdry values},
we find a sequence $(v_i)\subset\BV(X)$ of functions with compact support in $\Om$ and
$\Vert v_i-u\Vert_{\BV(X)}<1/i$ for each $i\in\N$.
Then by Lemma \ref{lem:extension of compactly supported function}, for each $i\in\N$
we find $u_i\in\Lip_c(\Om)$ with $\Vert u_i-v_i\Vert_{L^1(X)}<1/i$ and
\[
\left|\int_{X} g_{u_i}\,d\mu- \Vert Dv_i\Vert(X)\right|< 1/i.
\]
We conclude that $\Vert u_i-u\Vert_{L^1(X)}<2/i$ and
\[
\left|\int_{X} g_{u_i}\,d\mu- \Vert Du\Vert(X)\right|< 2/i.
\]
\end{proof}

Now we can also show the following result, the analog of which is well known for Newton-Sobolev
functions, see \cite[Lemma 2.37]{BB}.

\begin{proposition}
Let $\Om\subset X$ be an open set and let  $u\in \BV(\Om)$ and $v,w\in \BV_0(\Om)$ such that
$v\le u\le w$ in $\Om$.
Then $u\in\BV_0(\Om)$.
\end{proposition}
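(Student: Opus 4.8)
The plan is to produce, for the given $u\in\BV(\Om)$, its zero extension $\tilde u$ (equal to $u$ on $\Om$ and to $0$ on $X\setminus\Om$) and to show that $\tilde u\in\BV(X)$ with $\tilde u^{\wedge}=\tilde u^{\vee}=0$ $\mathcal H$-a.e. on $X\setminus\Om$, so that $u=\tilde u|_{\Om}\in\BV_0(\Om)$ by Definition \ref{def:BV functions with zero boundary values}. Let $v_0,w_0\in\BV(X)$ be extensions witnessing $v,w\in\BV_0(\Om)$; since these vanish $\mathcal H$-a.e., and hence $\mu$-a.e., on $X\setminus\Om$, we have $v_0\le\tilde u\le w_0$ $\mu$-a.e. on all of $X$.

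First I would reduce the finiteness of $\|D\tilde u\|(X)$ to the nonnegative case. Writing $\tilde u=\tilde u_{+}-\tilde u_{-}$, by \eqref{eq:BV functions form vector space} it suffices to treat $\tilde u_{+}$ and $\tilde u_{-}$ separately; these are the zero extensions of $u_{+},u_{-}\in\BV(\Om)$ (the latter lie in $\BV(\Om)$ by Lemma \ref{lem:BV functions form algebra} applied with the second function $0$) and satisfy $0\le\tilde u_{+}\le(w_0)_{+}$ and $0\le\tilde u_{-}\le(v_0)_{-}$ $\mu$-a.e., where $(w_0)_{+},(v_0)_{-}\in\BV_0(\Om)$. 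So it is enough to prove the core claim: if $f\in\BV(\Om)$ with $f\ge 0$ and its zero extension $\tilde f$ satisfies $0\le\tilde f\le\psi$ $\mu$-a.e. for some nonnegative $\psi\in\BV_0(\Om)$, then $\tilde f\in\BV(X)$ with $\|D\tilde f\|(X)=\|Df\|(\Om)$.

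For the core claim I would argue through the coarea formula \eqref{eq:coarea}, which holds as an identity in $[0,\infty]$. For $t>0$ the superlevel set $E_t:=\{\tilde f>t\}$ equals $\{f>t\}\subset\Om$ up to measure zero and is contained in $F_t:=\{\psi>t\}$. The key point is that for $\mathcal H$-a.e. $x\in X\setminus\Om$ we have $\psi^{\vee}(x)=0$, which forces the upper density of $F_t$, and hence of $E_t$, to vanish at $x$; by \eqref{eq:measure theoretic boundary} such $x$ lie outside $\partial^{*}E_t$. Thus $\mathcal H(\partial^{*}E_t\cap(X\setminus\Om))=0$, while $\mathcal H(\partial^{*}E_t\cap\Om)\le P(\{f>t\},\Om)/\alpha<\infty$ for a.e. $t$ by \eqref{eq:def of theta}. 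Hence $\mathcal H(\partial^{*}E_t)<\infty$, so by the codimension-one characterization of finite perimeter behind \eqref{eq:def of theta} (see \cite{A1,AMP}) $E_t$ has finite perimeter in $X$; then \eqref{eq:def of theta} applied on the open set $X$ with $A=X\setminus\Om$ gives $P(E_t,X\setminus\Om)\le C_d\mathcal H(\partial^{*}E_t\cap(X\setminus\Om))=0$. Therefore $P(E_t,X)=P(E_t,\Om)$ for a.e. $t>0$, and integrating via \eqref{eq:coarea} on both $X$ and $\Om$ yields $\|D\tilde f\|(X)=\|Df\|(\Om)<\infty$.

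Finally, with $\tilde u\in\BV(X)$ established, the zero boundary values are immediate from the sandwich: since $v_0\le\tilde u\le w_0$ $\mu$-a.e., monotonicity of the approximate limits under $\mu$-a.e. inequality gives $v_0^{\wedge}\le\tilde u^{\wedge}\le\tilde u^{\vee}\le w_0^{\vee}$, and as $v_0^{\wedge}=w_0^{\vee}=0$ $\mathcal H$-a.e. on $X\setminus\Om$, we conclude $\tilde u^{\wedge}=\tilde u^{\vee}=0$ $\mathcal H$-a.e. there. I expect the main obstacle to be the core step, and specifically the passage from ``$E_t$ has finite perimeter in $\Om$ and its measure-theoretic boundary meets $X\setminus\Om$ in an $\mathcal H$-null set'' to ``$E_t$ has finite perimeter in $X$ with no boundary mass on $X\setminus\Om$''; this is exactly where the characterization of perimeter through $\mathcal H(\partial^{*}E_t)$ from \cite{A1,AMP} is needed, going beyond the purely local statement \eqref{eq:def of theta}. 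The role of Proposition \ref{prop:variation measure concentration for BV0} is played here in the sharper, set-level form through the density argument for $\partial^{*}E_t$.
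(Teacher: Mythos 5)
Your overall architecture (extend by zero, reduce to the nonnegative case, get the boundary values from the sandwich at the end) is reasonable, and several pieces are correct: the reduction via $\tilde u_{+}\le (w_0)_{+}$, $\tilde u_{-}\le (v_0)_{-}$, the density argument showing $\partial^{*}E_t\cap(X\setminus\Om)$ is $\mathcal H$-negligible, and the final passage from $v_0\le \tilde u\le w_0$ to $\tilde u^{\wedge}=\tilde u^{\vee}=0$ $\mathcal H$-a.e.\ on $X\setminus\Om$. But the pivotal step of your core claim has a genuine gap. You pass from ``$P(E_t,\Om)<\infty$ and $\mathcal H(\partial^{*}E_t\cap(X\setminus\Om))=0$, hence $\mathcal H(\partial^{*}E_t)<\infty$'' to ``$E_t$ has finite perimeter in $X$'', attributing this to a characterization ``behind \eqref{eq:def of theta} (see \cite{A1,AMP})''. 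No such characterization is behind \eqref{eq:def of theta}: the results of \cite{A1,AMP}, and \eqref{eq:def of theta} itself, take $P(E,\Omega)<\infty$ as a \emph{hypothesis} and conclude that $P(E,\cdot)$ is comparable to the restriction of $\mathcal H$ to $\partial^{*}E$; they say nothing in the converse direction, which is exactly what you need. The implication $\mathcal H(\partial^{*}E)<\infty\implies P(E,X)<\infty$ is Federer's characterization of sets of finite perimeter --- a deep theorem even in the Euclidean setting, and in the metric setting of this paper it was not available in this form at the time: the closest result, the author's own \cite{L3}, is a Federer-style criterion phrased in terms of the $1$-fine boundary rather than $\partial^{*}E$. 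So as written the step is circular relative to the sources you cite, and the proof cannot be completed with the tools the paper provides for \eqref{eq:def of theta}.

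The paper's own proof avoids this machinery entirely and stays at the level of elementary $\BV$ tools. After reducing to $v\equiv 0$, it takes the zero extension $u_0$ of $u$, approximates $w$ by nonnegative functions $w_k\in\BV(X)$ with compact support in $\Om$ (Theorem \ref{thm:characterization of BV function with zero bdry values}), and truncates: $\varphi_k:=\min\{w_k,u_0\}$ lies in $\BV(\Om)$ by Lemma \ref{lem:BV functions form algebra}, has compact support in $\Om$, hence lies in $\BV(X)$ by Lemma \ref{lem:extension of compactly supported function}, with $\Vert D\varphi_k\Vert(X)=\Vert D\varphi_k\Vert(\Om)\le \Vert Dw_k\Vert(\Om)+\Vert Du_0\Vert(\Om)$ uniformly bounded. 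Since $\varphi_k\to u_0$ in $L^1(X)$, lower semicontinuity of the total variation under $L^1$-convergence gives $u_0\in\BV(X)$, and the sandwich finishes as in your last paragraph. The moral difference: the upper bound $w$ is used \emph{constructively} (to cut $u_0$ down to compactly supported pieces whose variation is controlled), rather than only through the $\mathcal H$-negligibility of its boundary trace, and this is what lets the paper dodge Federer's characterization altogether. If you want to salvage your coarea outline, you would need to perform an analogous truncation at the level of the sets $E_t$, which essentially reproduces the paper's argument.
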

\begin{proof}
By subtracting $v$ from all terms and observing that $u\in \BV_0(\Om)$ if and only if $u-v\in \BV_0(\Om)$,
we can assume that $v\equiv 0$.
Denote the zero extension of $u$ outside $\Om$ by $u_0$. 
By Theorem \ref{thm:characterization of BV function with zero bdry values}, we find a sequence of
nonnegative functions $(w_k)\subset\BV(X)$ compactly supported in $\Om$ with
$w_k\to w$ in $\BV(X)$ (the nonnegativity actually follows from the proof, or alternatively by truncation).
Then $\varphi_k:=\min\{w_k,u_0\}\in\BV(\Om)$ by Lemma \ref{lem:BV functions form algebra},
and $\varphi_k\in \BV(X)$ by Lemma
\ref{lem:extension of compactly supported function}, for each $k\in\N$.
Moreover, $\varphi_k\to u_0$ in $L^1(X)$,
and since each $\varphi_k$ has compact support in $\Om$,
\begin{align*}
\liminf_{k\to\infty}\Vert D\varphi_k\Vert(X)
&=\liminf_{k\to\infty}\Vert D\varphi_k\Vert(\Om)\\
&\le \liminf_{k\to\infty}\Vert Dw_k\Vert(\Om)
+\Vert Du_0\Vert(\Om)\quad\textrm{by Lemma }\ref{lem:BV functions form algebra}\\
&= \Vert Dw\Vert(\Om)+\Vert Du\Vert(\Om).
\end{align*}
Thus by the lower semicontinuity of the total variation with respect to $L^1$-convergence, $u_0\in\BV(X)$. Moreover,
$u_0^{\vee}(x)\le w^{\vee}(x)=0$ for $\mathcal H$-a.e. $x\in X\setminus\Om$,
and obviously $u_0^{\wedge}(x)\ge 0$ for all $x\in X\setminus\Om$,
guaranteeing that $u_0^{\wedge}=u_0^{\vee}=0$ $\mathcal H$-a.e. in $X\setminus\Om$.
\end{proof}

\section{The variational $1$-capacity}\label{sec:variational capacity}

In this section we study the variational (Newton-Sobolev) $1$-capacity and its Lipschitz and
$\BV$ analogs. Utilizing the results of
the previous section, we show that each of these is an outer capacity, and that the
capacities are equal for certain sets.

\begin{definition}
Let $A\subset D\subset X$ be arbitrary sets.
We define the variational (Newton-Sobolev) $1$-capacity by
\[
\rcapa_1(A,D):=\inf \int_X g_u\,d\mu,
\]
where the infimum is taken over functions $u\in N^{1,1}_0(D)$ such that $u\ge 1$ on $A$.

We define the variational Lipschitz $1$-capacity by
\[
\rcapa_{\mathrm{lip}}(A,D):=\inf \int_{X}g_u\,d\mu,
\]
where the infimum is taken over functions $u\in N^{1,1}_0(D)\cap \liploc(X)$
such that $u\ge 1$ on $A$.

Finally, we define the variational $\BV$-capacity by
\[
\rcapa_{\BV}(A,D):=\inf \Vert Du\Vert(X),
\]
where the infimum is taken over functions $u\in\BV_0(D)$ such that $u^{\wedge}\ge 1$ $\mathcal H$-almost everywhere on $A$.

In each case, we say that the functions $u$ over which we take the infimum are admissible (test) functions
for the capacity in question.
\end{definition}

Again, $g_u$ always denotes the minimal $1$-weak upper gradient of $u$.
Recall that we understand Newton-Sobolev functions to be defined at every point, but in
the definition of $\rcapa_1(A,D)$ we can equivalently
require $u\ge 1$ $1$-q.e. on $A$, by \eqref{eq:quasieverywhere equivalence classes}.
On the other hand, perturbing the representatives $u^{\wedge}$ and $u^{\vee}$ even at a single point
requires perturbing the function $u$ in a set of positive $\mu$-measure.
In each definition, we see by truncation that it is enough to consider test functions $0\le u\le 1$,
and then the conditions $u\ge 1$ and $u^{\wedge}\ge 1$ are replaced by $u= 1$ and $u^{\wedge}= 1$,
respectively.

Our definition of $\rcapa_1$ is the same as the one given in \cite{BB-cap},
where the variational $p$-capacity was studied for all $1\le p<\infty$.
In our definition of $\rcapa_{\BV}$, we have then mimicked the definition of $\rcapa_1$
as closely as possible --- note that for $u\in N_0^{1,1}(D)$, requiring that
$u= 1$ $1$-q.e. on $A$ is equivalent to requiring that $u^{\wedge}= 1$ $\mathcal H$-a.e. on $A$, due to \eqref{eq:null sets of Hausdorff measure and capacity} and
Proposition \ref{prop:Lebesgue points for Sobolev functions}.
Since $N_0^{1,1}(D)\subset \BV_0(D)$ with $\Vert Du\Vert(X)\le \int_X g_u\,d\mu$
for every $u\in N_0^{1,1}(D)$ (recall \eqref{eq:Sobolev subclass BV} and
\eqref{eq:Newtonian zero class contained in BV zero}), we conclude that always
$\rcapa_{\BV}(A,D)\le \rcapa_1(A,D)$.
Clearly we also always have $\rcapa_1(A,D)\le \rcapa_{\mathrm{lip}}(A,D)$.
In Theorem \ref{thm:BV and lip caps are equal} and Example \ref{ex:weak density fails} below
we investigate when equalities hold.

Also other definitions of $\rcapa_{\BV}$ have been given in the literature.
In \cite{HaSh}, given an open set $\Om\subset X$ and a compact set $K\subset\Om$,
$\rcapa_{\BV}(K,\Om)$ was defined by considering $\BV$ test functions that are compactly supported in  $\Om$ and take the value $1$ in a neighborhood of $K$. By Theorem \ref{thm:BV and lip caps are equal},
this turns out to agree with our current definition of $\rcapa_{\BV}(K,\Om)$.
For more general sets, however, the definitions can give different results.

\begin{example}\label{ex:first comparisons of capacities}
Let $X=\R^2$ (unweighted), and let $D=[0,1]\times [0,1]$ and $A=\{(0,0)\}$.
Since $\capa_1(A)=0$, also $\rcapa_1(A,D)=0$ (as $u\equiv 0$ satisfies $u= 1$ $1$-q.e. on $A$).
On the other hand, if we defined $\rcapa_{\BV}(A,D)$ by requiring the test functions to take the value $1$ in a \emph{neighborhood} of $A$, as in e.g. \cite{HaSh,KKST-DG},
then we would have $\rcapa_{\BV}(A,D)=\infty$,
since there are no admissible functions.
The same would already happen if we required that $u^{\wedge}=1$ at \emph{every} point in $A$,
instead of $\mathcal H$-a.e. point.  
Our current definition
of the variational $\BV$-capacity has the advantage that the natural inequality
$\rcapa_{\BV}(A,D)\le \rcapa_{1}(A,D)$ always holds.
From this example we also see that it is possible to have $\rcapa_1(A,D)< \rcapa_{\mathrm{lip}}(A,D)$
(the latter being $\infty$).
\end{example}

In \cite{L3}, for an open set $\Om\subset X$ and an arbitrary set $A\subset\Om$, $\rcapa_{\BV}(A,\Om)$ was defined otherwise similarly as here, but the condition
$u\in\BV_0(\Om)$ was replaced by the condition
$u=0$ on $X\setminus \Om$ (meaning that $u=0$ $\mu$-a.e. on $X\setminus \Om$).
This corresponds to the different possible ways of defining the class of $\BV$ functions with
zero boundary values, as discussed earlier.
The advantage of
the definition in \cite{L3} is that in some cases it is possible to prove the existence of capacitary potentials, i.e. admissible functions $u$ that yield the infimum
in the definition of $\rcapa_{\BV}$.

\begin{example}
Let $X=\R$, let
\[
w(x):=
\begin{cases}
1 & \textrm{for }x< 0,\\
1+x & \textrm{for }0\le x\le 1,\\
2 & \textrm{for }1< x,
\end{cases}
\]
and let $d\mu:=w\,d\mathcal L^1$, where $\mathcal L^1$ is the $1$-dimensional Lebesgue measure.
Let $A=(1,2)$ and $D=(0,3)$.
Defining $u_i:=\ch_{(1/i,2)}$, we get
\[
\rcapa_{\BV}(A,D)\le \Vert Du_i\Vert(X)=3+1/i,\quad i\in\N.
\]
Thus $\rcapa_{\BV}(A,D)\le 3$.
Conversely, let $0\le u\le 1$ be an admissible function.
Note that $\mathcal H$ is now comparable to the counting measure, and so
necessarily $u^{\vee}(0)=0$. Thus for some $0<r<1$ we have
\[
\frac{\mathcal L^1(B(0,r)\cap \{u<1/2\})}{\mathcal L^1(B(0,r))}>\frac{1}{2}.
\]
Let $(v_i)\subset\liploc(\R)$ with $v_i\to u$ in $L^1_{\loc}(\R)$ and
$\int_{\R}g_{v_i}\,d\mu\to \Vert Du\Vert(\R)$.
By passing to a subsequence (not relabeled) we have $v_i(x)\to u(x)$ for a.e. $x\in \R$.
Then we have $v_i(x_1)\to 0$ for some $x_1<0$,
$v_i(x_2)\to u^{\vee}(x_2)<1/2$ for some $0<x_2<1$,
$v_i(x_3)\to 1$ for some $1<x_3<2$,
and $v_i(x_4)\to 0$ for some $x_4>3$.
Thus
\begin{align*}
\int_\R g_{v_i}\,d\mu
&\ge \int_{x_1}^{x_2} g_{v_i}\,d\mu+\int_{x_2}^{x_3} g_{v_i}\,d\mu+\int_{x_3}^{x_4} g_{v_i}\,d\mu\\
&\ge |v_i(x_1)-v_i(x_2)|+ w(x_2)|v_i(x_2)-v_i(x_3)|+2|v_i(x_3)-v_i(x_4)|\\
&\to u^{\vee}(x_2)+  w(x_2)(1-u^{\vee}(x_2))+2\quad\textrm{as }i\to\infty\\
&>3
\end{align*}
since $w(x_2)>1$.
Thus $\Vert Du\Vert(\R)>3$, that is, $\rcapa_{\BV}(A,D)= 3$ but no admissible function
gives this infimum.
On the other hand, if we defined $\rcapa_{\BV}(A,D)$ by only requiring that $u=0$ on
$\R\setminus D$, then
the function $\ch_D$ would be admissible and $\rcapa_{\BV}(A,D)=3=\Vert D\ch_D\Vert(\R)$.
The drawback of such a definition is that Theorem \ref{thm:BV and lip caps are equal}
below would no longer hold, see Example \ref{ex:comparison of capacities}.
\end{example}

Now we prove a few simple properties of the variational $\BV$-capacity, the analogs of
which are known for the $1$-capacity $\rcapa_1$, see \cite[Theorem 3.4]{BB-cap}.

\begin{proposition}\label{prop:basic properties of the variational BV-capacity}
The following hold:
\begin{enumerate}[{(1)}]
\item For any $D\subset X$, $\rcapa_{\BV}(\emptyset,D)=0$.
\item If $A_1\subset A_2\subset D$, then $\rcapa_{\BV}(A_1,D)\le \rcapa_{\BV}(A_2,D)$.
\item If $A\subset D_1\subset D_2$, then $\rcapa_{\BV}(A,D_2)\le \rcapa_{\BV}(A,D_1)$.
\item If $A_1,A_2\subset D\subset X$,
\[
\rcapa_{\BV}(A_1\cap A_2,D)+\rcapa_{\BV}(A_1\cup A_2,D)\le \rcapa_{\BV}(A_1,D)+\rcapa_{\BV}(A_2,D).
\]
\end{enumerate}
\end{proposition}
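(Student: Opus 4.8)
The plan is to dispatch the four claims in order, with parts (1)--(3) following immediately from the definition and part (4) being the only one that requires real work. For (1), the zero function lies in $\BV_0(D)$ and satisfies $u^\wedge\ge 1$ $\mathcal H$-a.e.\ on $A=\emptyset$ vacuously, so $\rcapa_{\BV}(\emptyset,D)\le\Vert D0\Vert(X)=0$. For (2), any admissible $u$ for $A_2$ satisfies $u^\wedge\ge 1$ $\mathcal H$-a.e.\ on $A_2\supset A_1$, hence on $A_1$ as well, so the admissible class for $A_1$ contains that for $A_2$ and the infimum can only decrease. For (3), I would check that $\BV_0(D_1)\subset\BV_0(D_2)$ when $D_1\subset D_2$: since $X\setminus D_2\subset X\setminus D_1$, the condition $u^\wedge=u^\vee=0$ $\mathcal H$-a.e.\ on $X\setminus D_1$ persists on the smaller set $X\setminus D_2$; thus every function admissible for $(A,D_1)$ is admissible for $(A,D_2)$, which yields the stated inequality.

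For the submodularity estimate (4), I may assume the right-hand side is finite and fix admissible functions $u_1,u_2$ for $A_1,A_2$, both lying in $\BV_0(D)$. The heart of the argument is to show that $\max\{u_1,u_2\}$ is admissible for $A_1\cup A_2$ and $\min\{u_1,u_2\}$ is admissible for $A_1\cap A_2$, after which Lemma \ref{lem:BV functions form algebra} gives
\[
\Vert D\min\{u_1,u_2\}\Vert(X)+\Vert D\max\{u_1,u_2\}\Vert(X)\le\Vert Du_1\Vert(X)+\Vert Du_2\Vert(X),
\]
and taking the infimum over $u_1$ and $u_2$ independently yields the claim. The admissibility conditions and the membership in $\BV_0(D)$ both rest on the elementary inequalities $(\max\{u_1,u_2\})^\wedge\ge\max\{u_1^\wedge,u_2^\wedge\}$ and $(\min\{u_1,u_2\})^\wedge\ge\min\{u_1^\wedge,u_2^\wedge\}$, together with the reverse inequalities for the upper limits $(\cdot)^\vee$; these follow directly from the density definitions of the approximate limits and the set identities $\{\max\{u_1,u_2\}>t\}=\{u_1>t\}\cup\{u_2>t\}$ and $\{\min\{u_1,u_2\}<t\}=\{u_1<t\}\cup\{u_2<t\}$. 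The $\wedge$-inequalities give $u^\wedge\ge 1$ $\mathcal H$-a.e.\ on $A_1\cup A_2$ (resp.\ on $A_1\cap A_2$), while combining the $\vee$-inequalities with $u^\wedge\le u^\vee$ and $u_1,u_2\in\BV_0(D)$ forces both representatives to vanish $\mathcal H$-a.e.\ on $X\setminus D$.

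I expect the one genuine subtlety to be the bookkeeping with the pointwise representatives in part (4): Lemma \ref{lem:BV functions form algebra} only supplies the total-variation bound and the fact that the minimum and maximum lie in $\BV(X)$, whereas confirming that they lie in the smaller class $\BV_0(D)$ and that the capacity admissibility conditions hold requires the one-sided comparisons of $(\cdot)^\wedge$ and $(\cdot)^\vee$ described above. Everything else is a routine assembly of definitions.
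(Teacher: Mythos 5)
Your proof is correct and takes essentially the same route as the paper's: parts (1)--(3) are dismissed as immediate from the definitions, and part (4) uses $\min\{u_1,u_2\}$ and $\max\{u_1,u_2\}$ with Lemma \ref{lem:BV functions form algebra}, verifying admissibility and membership in $\BV_0(D)$ by exactly the kind of density (union-bound) comparisons of $(\cdot)^{\wedge}$ and $(\cdot)^{\vee}$ that the paper carries out for $v^{\wedge}$ on $A_1\cap A_2$. The only cosmetic difference is that the paper first truncates the test functions to $0\le u_j\le 1$ and works with $\eps$-optimal competitors, whereas you keep general test functions and instead use both one-sided comparisons (lower bound from the $\wedge$-inequalities, upper bound from the $\vee$-inequalities) to pin down the representatives on $X\setminus D$.
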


\begin{proof}
\hfill\\
\noindent$(1)$--$(3)$: These statements are trivial.\\

\noindent$(4)$: We can assume that the right-hand side is finite. Fix $\eps>0$.
Take $u_j\in \BV_0(D)$ with $0\le u_j\le 1$, $u_j^{\wedge}=1$ on $A_j$, and
$\Vert Du_j\Vert(X)< \rcapa_{\BV}(A_j,D)+\eps$,
$j=1,2$. Let $v:=\min\{u_1,u_2\}$ and $w:=\max\{u_1,u_2\}$.
By Lemma \ref{lem:BV functions form algebra} we have
\[
\Vert Dv\Vert(X)+\Vert Dw\Vert(X)\le \Vert Du_1\Vert(X)+\Vert Du_2\Vert(X),
\]
and so $v,w\in\BV(X)$.
Clearly $w^{\wedge}=1$ on $A_1\cup A_2$. To verify that $v^{\wedge}=1$ on $A_1\cap A_2$,
we note that for any $x\in A_1\cap A_2$ and any $\delta>0$,
\begin{align*}
&\limsup_{r\to 0}\frac{\mu(\{v<1-\delta\}\cap B(x,r)))}{\mu(B(x,r))}\\
&\ \ \le
\limsup_{r\to 0}\frac{\mu(\{u_1<1-\delta\}\cap B(x,r))}{\mu(B(x,r))}
+\limsup_{r\to 0}\frac{\mu(\{u_2<1-\delta\}\cap B(x,r))}{\mu(B(x,r))}\\
&\ \  = 0
\end{align*}
by the fact that $u_1^{\wedge}(x)=u_2^{\wedge}(x)=1$.
Thus $v^{\wedge}(x)\ge 1-\delta$, and by letting $\delta\to 0$ we get $v^{\wedge}(x)= 1$.
Similarly, $v^{\vee}=0=w^{\vee}$ on $X\setminus D$, so that $v,w\in\BV_0(D)$.
Thus
\begin{align*}
&\rcapa_{\BV}(A_1\cap A_2,D)+\rcapa_{\BV}(A_1\cup A_2,D)
\le \Vert Dv\Vert(X)+\Vert Dw\Vert(X)\\
&\qquad\qquad\qquad\qquad\qquad\le \rcapa_{\BV}(A_1,D)+\rcapa_{\BV}(A_2,D)+2\eps.
\end{align*}
Letting $\eps\to 0$ completes the proof.
\end{proof}

Next we show that each of the three capacities we have defined is an outer capacity, in a suitable sense.
First we prove this for the variational (Newton-Sobolev) $1$-capacity. This gives a positive answer to a question posed in \cite{BB-cap},
where the analogous result for $1<p<\infty$ was proved.
In fact, using methods similar to those in \cite{BB-cap},
we give a proof that covers all the cases $1\le p<\infty$;
recall the definition of $\rcapa_p$ from \eqref{eq:def of variational p-capacity}.

We need the following lemma, which is a special case of \cite[Lemma 1.52]{BB}.

\begin{lemma}\label{lem:sup is upper gradient}
Let $u_i\le 1$, $i\in\N$, be functions on $X$ with $p$-weak upper gradients $g_i$.
Let $u:=\sup_{i\in\N} u_i$ and $g:=\sup_{i\in\N} g_i$. Then $g$ is a $p$-weak upper gradient of $u$.
\end{lemma}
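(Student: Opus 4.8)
The plan is to reduce everything to a single elementary inequality about suprema, evaluated along one fixed ``good'' curve. First I would record that $g=\sup_{i}g_i$ is nonnegative and $\mu$-measurable, being a countable supremum of nonnegative $\mu$-measurable functions, so it is a legitimate candidate for a $p$-weak upper gradient. For each $i\in\N$, let $\Gamma_i$ denote the family of curves along which the upper gradient inequality \eqref{eq:definition of upper gradient} fails for the pair $(u_i,g_i)$; by hypothesis this family has zero $p$-modulus. Since a countable union of curve families of zero $p$-modulus again has zero $p$-modulus, the family $\Gamma:=\bigcup_{i\in\N}\Gamma_i$ also has zero $p$-modulus. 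It then suffices to verify \eqref{eq:definition of upper gradient} for the pair $(u,g)$ along every curve $\gamma\notin\Gamma$.

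The key observation is the elementary inequality
\[
\Bigl|\sup_i a_i-\sup_i b_i\Bigr|\le \sup_i|a_i-b_i|,
\]
valid for any $a_i,b_i\in[-\infty,1]$ once we adopt the paper's convention that a difference is $\infty$ as soon as one of the entries is infinite. Here the upper bound $u_i\le 1$ is essential: it guarantees $u=\sup_i u_i\le 1$, so that the value $+\infty$ never occurs. (Without this hypothesis the inequality can fail; e.g.\ taking $a_i=b_i\nearrow+\infty$ makes the left-hand side $\infty$ but the right-hand side $0$.) When both suprema are finite the inequality is the standard one for real numbers. When one of the suprema equals $-\infty$, every term entering that supremum equals $-\infty$, and one checks directly from the convention that both sides equal $\infty$; this is the only case in which the value $-\infty$ plays a role, and it is handled uniformly.

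With this in hand, fix a curve $\gamma\notin\Gamma$ with endpoints $x$ and $y$. Applying the displayed inequality with $a_i=u_i(x)$ and $b_i=u_i(y)$, and then using that each $g_i$ satisfies \eqref{eq:definition of upper gradient} for $u_i$ along $\gamma$ (as $\gamma\notin\Gamma_i$) together with $g_i\le g$ pointwise, I would estimate
\[
|u(x)-u(y)|\le \sup_i|u_i(x)-u_i(y)|\le \sup_i\int_\gamma g_i\,ds\le \int_\gamma g\,ds.
\]
Since this holds for every $\gamma\notin\Gamma$ and $\Gamma$ has zero $p$-modulus, $g$ is a $p$-weak upper gradient of $u$, as claimed.

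I expect the only genuinely delicate point to be the justification of the elementary supremum inequality in the presence of the value $-\infty$, and in particular the bookkeeping showing that the hypothesis $u_i\le 1$ is exactly what rules out the problematic case $+\infty$. The reduction to a single good curve via countable subadditivity of the $p$-modulus, together with the measurability of $g$, is routine; this approach has the advantage of avoiding any passage through finite maxima $\max\{u_1,\dots,u_n\}$ and the attendant lattice computations for upper gradients.
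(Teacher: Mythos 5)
Your proof is correct. There is nothing in the paper to compare it against line by line: the paper does not prove this lemma at all, but simply quotes it as a special case of \cite[Lemma 1.52]{BB}. Your argument is essentially the standard proof of that cited result: reduce to a single exceptional curve family $\Gamma$ by countable subadditivity of the $p$-modulus, then pass to suprema in the upper gradient inequality along each curve $\gamma\notin\Gamma$. The only stylistic difference is that you package the pointwise step as $\left|\sup_i a_i-\sup_i b_i\right|\le\sup_i|a_i-b_i|$, whereas the usual write-up uses the one-sided estimate $u_i(x)\le u_i(y)+\int_\gamma g_i\,ds\le u(y)+\int_\gamma g\,ds$, takes the supremum over $i$, and concludes by symmetry; the two are equivalent, and your version handles the value $-\infty$ in one stroke. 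One minor imprecision in your case analysis of the elementary inequality: individual terms can equal $-\infty$ even when both suprema are finite (say $a_1=-\infty$, $a_2=0$, $b_1=b_2=0$), so that case is not literally ``the standard inequality for real numbers.'' This costs nothing: whenever some $a_i$ or $b_i$ is infinite, the paper's convention gives $|a_i-b_i|=\infty$, making the right-hand side $+\infty$ and the inequality trivial; the genuine real-number inequality is needed only when every term is finite, and then both suprema are automatically finite (bounded above by $1$ and below by $a_1$, respectively $b_1$). With that trivial patch the argument is complete, and your identification of the hypothesis $u_i\le 1$ as exactly what excludes the problematic value $+\infty$ is correct.
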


\begin{theorem}\label{thm:1-capacity is outer}
Let $1\le p<\infty$ and let $D\subset X$ and $A\subset \inte D$. Then
\[
\rcapa_p(A,D)=\inf_{\substack{V\textrm{ open} \\A\subset V\subset D}}\rcapa_p(V,D).
\]
\end{theorem}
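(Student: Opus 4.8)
The plan is to prove the nontrivial inequality
$\rcapa_p(A,D)\ge\inf_{V}\rcapa_p(V,D)$, since the reverse follows at once from monotonicity in the first variable (the analog of Proposition~\ref{prop:basic properties of the variational BV-capacity}(2) for $\rcapa_p$, cf. \cite[Theorem 3.4]{BB-cap}): for every open $V$ with $A\subset V\subset D$ we have $\rcapa_p(A,D)\le\rcapa_p(V,D)$. We may assume $\rcapa_p(A,D)<\infty$. Fix $\eps>0$ and choose an admissible $u\in N_0^{1,p}(D)$ with $0\le u\le 1$, with $u=1$ $p$-q.e. on $A$, and with $\int_X g_u^p\,d\mu\le\rcapa_p(A,D)+\eps$. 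Write $\Om:=\inte D$, so that $A\subset\Om$.

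First I would open up the superlevel sets by rescaling: for $0<\tau<1$ put $v_\tau:=\min\{u/(1-\tau),1\}$. Then $v_\tau\in N_0^{1,p}(D)$ (it vanishes wherever $u$ does, hence $p$-q.e. on $X\setminus D$), one has $v_\tau=1$ on the set $\{u>1-\tau\}$, and by the truncation rule for upper gradients $\int_X g_{v_\tau}^p\,d\mu\le(1-\tau)^{-p}\int_X g_u^p\,d\mu$. Note that $\{u>1-\tau\}$ contains $A$ up to the $\capa_p$-null set $N:=\{x\in A:\,u(x)<1\}$, and that $\{u>0\}\setminus D$ is also $\capa_p$-null.

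Next I would use quasicontinuity to turn the superlevel set into an open set modulo small capacity. By Theorem~\ref{thm:quasicontinuity} there is an open $G_0$ with $\capa_p(G_0)<\delta$ such that $u|_{X\setminus G_0}$ is continuous; hence $\{u>1-\tau\}\setminus G_0$ is relatively open in $X\setminus G_0$. Since $\capa_p$ is an outer capacity, I would enlarge $G_0$ to an open set $G$ with $\capa_p(G)<C\delta$ that in addition contains the two $\capa_p$-null sets $N$ and $\{u>0\}\setminus D$. Picking an open $W$ with $W\setminus G=\{u>1-\tau\}\setminus G$, the set $V:=(W\cup G)\cap\Om$ is open and satisfies $A\subset V\subset\Om\subset D$; here the hypothesis $A\subset\inte D$ is exactly what lets me intersect with $\Om$ without losing $A$. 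By construction $v_\tau=1$ on $V\setminus G$, while on $V\cap G=G\cap\Om$ one still needs a correction.

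The heart of the matter is to produce a corrector $w\in N_0^{1,p}(D)$ with $w\ge 1$ on $G\cap\Om$, with $0\le w\le 1$, and with $\int_X g_w^p\,d\mu<C\delta$. Given such $w$, the function $\psi:=\max\{v_\tau,w\}$ lies in $N_0^{1,p}(D)$ (both $v_\tau$ and $w$ vanish $p$-q.e. outside $D$), equals $1$ on $V$, and by the max-rule for minimal upper gradients (the rule used in Lemma~\ref{lem:BV functions form algebra}, which splits the integral over the disjoint sets $\{v_\tau>w\}$ and $\{v_\tau\le w\}$) satisfies $\int_X g_\psi^p\,d\mu\le\int_X g_{v_\tau}^p\,d\mu+\int_X g_w^p\,d\mu\le(1-\tau)^{-p}(\rcapa_p(A,D)+\eps)+C\delta$. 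Thus $\rcapa_p(V,D)\le(1-\tau)^{-p}(\rcapa_p(A,D)+\eps)+C\delta$, and letting $\delta,\eps\to 0$ and then $\tau\to 0$ finishes the proof. The construction of $w$ is where I expect the real obstacle to be: the exceptional open set $G\cap\Om$ may reach arbitrarily close to $\partial D$, so $w$ must vanish outside $D$ while retaining small energy, and one cannot simply cut off a capacitary potential for $\capa_p(G)$ near $\partial D$. The natural remedy is to cover $G\cap\Om$ by balls lying inside $\Om$ whose total content is controlled by $\capa_p(G)$ (using the comparison between $\capa_p$ and a Hausdorff-type content, cf. \eqref{eq:null sets of Hausdorff measure and capacity}), to build standard bump functions on these balls, and to take their supremum, bounding the upper gradient of $w$ via Lemma~\ref{lem:sup is upper gradient}.
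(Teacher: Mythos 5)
Your reduction is sound and in fact runs parallel to the paper's own proof: rescale to open up the level set, use quasicontinuity (Theorem~\ref{thm:quasicontinuity}) together with the outerness of $\capa_p$ to produce an open set $V\supset A$ inside $D$, and observe that everything hinges on a corrector $w\in N_0^{1,p}(D)$ that is large on the exceptional set and has small energy. But that corrector is precisely the crux of the theorem, and you never construct it; moreover, the route you sketch for it does not work. The energy of a supremum of bump functions on balls $B_i=B(x_i,r_i)$ is of order $\sum_i\mu(B_i)/r_i^p$, a codimension-$p$ content, and for $p>1$ small $\capa_p$ does not control any such content: already in $\R^3$ with $p=2$, a line segment has zero $\capa_2$ but positive codimension-$2$ content, so no covering with small content sum exists at all. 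The equivalence \eqref{eq:null sets of Hausdorff measure and capacity} you invoke is only the qualitative null-set statement, and only for $p=1$. Even in the case $p=1$, where a quantitative comparison of boxing-inequality type does hold (it is not stated in this paper), your covering would have to consist of balls contained in $\Om$, and a content-controlled covering gives no such control near $\partial\Om$: shrinking balls to fit inside $\Om$ can increase the content sum, so a further Whitney-type argument would be needed.

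The paper resolves this obstacle by a more elementary device, and by demanding less of the corrector. Exhaust $D$ by $D_j:=\{x\in D:\,\dist(x,X\setminus D)>1/j\}$ and work level by level: for each $j$ choose the quasicontinuity set $G_j$ with $\capa_p(G_j)^{1/p}<2^{-j}\eps/j$, take a capacitary test function $v_j$ for $G_j$, and multiply it by the $j$-Lipschitz cutoff $\eta_j:=(1-j\dist(\cdot,D_j))_+$, which equals $1$ on $D_j$ and vanishes off $D$. The Leibniz rule gives $g_{v_j\eta_j}\le jv_j+g_{v_j}$, and the dangerous factor $j$ coming from the cutoff is absorbed because $\Vert v_j\Vert_{L^p(X)}<2^{-j}\eps/j$ was chosen with the extra $1/j$. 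A single such cutoff only makes the corrector equal to $1$ on $G_j\cap D_j$ rather than on all of $G\cap\inte D$ --- this is the true content of your correct remark that ``one cannot simply cut off a capacitary potential'' --- but this is harmless: one takes $w:=\sup_j v_j\eta_j$ (with Lemma~\ref{lem:sup is upper gradient} controlling $g_w$) and correspondingly defines $V:=\bigcup_j\bigl((\{u>1-\eps\}\cup G_j)\cap D_j\bigr)$, which still contains $A$ because $A=\bigcup_j(A\cap D_j)$ by the hypothesis $A\subset\inte D$. With this replacement of your single corrector by a summable family of cut-off capacitary potentials, your argument closes for all $1\le p<\infty$.
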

\begin{proof}
One inequality is clear.
To prove the opposite inequality,
we can assume that $\rcapa_p(A,D)<\infty$. Fix $\eps>0$.
Take $u\in N_0^{1,p}(D)$ with $0\le u\le 1$, $u=1$ on $A$,
and $\int_X g_u^p\,d\mu<\rcapa_p(A,D)+\eps$.
For each $j\in\N$, let
\[
D_j:=\{x\in D:\,\dist(x,X\setminus D)>1/j\}
\]
and $A_j:=A\cap D_j$.
Take $j$-Lipschitz functions $\eta_j:=(1-j\dist(\cdot,D_j))_+$,
so that $0\le \eta_j\le 1$ on $X$ and $\eta_j=1$ on $D_j$.
Fix $j\in \N$.
By the quasicontinuity of Newton-Sobolev functions
(recall Theorem \ref{thm:quasicontinuity}),
there exists an open set $G_j\subset X$ with $\capa_p(G_j)^{1/p}<2^{-j}\eps/j$ such that
$u|_{X\setminus G_j}$ is continuous.
Thus there exists an open set $W\subset X$ such that
\[
W\setminus G_j=\{u>1-\eps\}\setminus G_j.
\]
Thus the set $\{u>1-\eps\}\cup G_j=W\cup G_j$ is open,
and then so is
\[
V_j:=(\{u>1-\eps\}\cup G_j)\cap D_j.
\]
Since
$u=1$ on $A_j$,
we conclude that $A_j\subset V_j$.
Take a function $v_j\in N^{1,p}(X)$ with $0\le v_j\le 1$ on $X$, $v_j=1$ on $G_j$,
and $\Vert v_j\Vert_{N^{1,p}(X)} <2^{-j}\eps/j$.
Let $w_j:=v_j \eta_j$. Then
\begin{equation}\label{eq:estimate L1 norm of rhoj}
\left(\int_X w_j^p\,d\mu\right)^{1/p}\le \left(\int_X v_j^p\,d\mu\right)^{1/p}<2^{-j}\eps/j,
\end{equation}
and by the Leibniz rule \cite[Theorem 2.15]{BB},
\begin{align*}
\left(\int_X g_{w_j}^p\,d\mu\right)^{1/p}
&\le \left(\int_X (v_j g_{\eta_j})^p\,d\mu\right)^{1/p}
+\left(\int_X (\eta_j g_{v_j})^p\,d\mu\right)^{1/p}\\
&\le j \left(\int_X v_j^p \,d\mu\right)^{1/p}+\left(\int_X g_{v_j}^p\,d\mu\right)^{1/p}\\
&\le j 2^{-j}\eps/j+2^{-j}\eps/j\\
&\le 2^{-j+1}\eps.
\end{align*}
Now $w_j=1$ on $G_j\cap D_j$, and so $u+w_j>1-\eps$ on $V_j$.
Let $w:=\sup_{j\in\N}w_j$.
Then $u+w>1-\eps$ in the open set $V:=\bigcup_{j=1}^{\infty}V_j$.
Note that $A=\bigcup_{j=1}^{\infty}A_j$ since $A\subset \inte D$, and so $A\subset V$.
We have $w\in L^p(X)$ by \eqref{eq:estimate L1 norm of rhoj}, and by Lemma
\ref{lem:sup is upper gradient}
we know that $g_{w}\le \sup_{j\in\N} g_{w_j}$, so that
\[
\left(\int_X g_{w}^p\,d\mu\right)^{1/p}
\le\sum_{j=1}^{\infty}\left(\int_X g_{w_j}^p\,d\mu\right)^{1/p}\le
\sum_{j=1}^{\infty}2^{-j+1}\eps
= 2\eps.
\]
Clearly $w=0$ on $X\setminus D$, and so we conclude $w\in N_0^{1,p}(D)$.
Then $(u+w)/(1-\eps)\in N_0^{1,p}(D)$ is an admissible function for the set $V$, whence
\begin{align*}
\rcapa_{p}(V,D)^{1/p}
&\le \frac{1}{1-\eps}\left(\int_X g_{u+w}^p\,d\mu\right)^{1/p}\\
&\le \frac{1}{1-\eps}\left(\left(\int_X g_{u}^p\,d\mu\right)^{1/p}
+\left(\int_X g_{w}^p\,d\mu\right)^{1/p}\right)\\
&\le \frac{1}{1-\eps}\left(\left(\rcapa_p(A,D)+\eps\right)^{1/p}+2\eps\right).
\end{align*}
Since $\eps>0$ was arbitrary, we have the result.
\end{proof}

Now we can extend a few other results of \cite{BB-cap} to the case $p=1$.

\begin{proposition}\label{prop:sequence of compact sets}
Let $D\subset X$ and let $K_1\supset K_2\supset \ldots \supset K:=\bigcap_{j=1}^{\infty}K_j$
be compact subsets of $\inte D$. Then
\[
\rcapa_{1}(K,D)=\lim_{j\to\infty}\rcapa_{1}(K_j,D).
\]
\end{proposition}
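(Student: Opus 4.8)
The plan is to prove this ``continuity from above along decreasing compact sets'' statement for $\rcapa_1$ by combining the monotonicity from Proposition \ref{prop:basic properties of the variational BV-capacity} (in its $\rcapa_1$ analog from \cite{BB-cap}) with the outer capacity result just proved in Theorem \ref{thm:1-capacity is outer}. One inequality is immediate: since $K\subset K_j$ for every $j$, monotonicity in the first argument gives $\rcapa_1(K,D)\le \rcapa_1(K_j,D)$ for all $j$, and since the sequence $\rcapa_1(K_j,D)$ is nonincreasing (again by monotonicity, as $K_{j+1}\subset K_j$), it converges and $\rcapa_1(K,D)\le \lim_{j\to\infty}\rcapa_1(K_j,D)$.

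For the reverse inequality, the key step is to exploit outerness. By Theorem \ref{thm:1-capacity is outer} applied to $K$ (which is a compact subset of $\inte D$, hence $K\subset\inte D$ as required), for any $\eps>0$ there is an open set $V$ with $K\subset V\subset D$ and $\rcapa_1(V,D)<\rcapa_1(K,D)+\eps$. First I would note that without loss of generality $V$ can be taken to be a subset of $\inte D$: intersecting $V$ with $\inte D$ only shrinks it while still containing $K$, and monotonicity keeps the capacity bound. The crucial geometric input is then that $V$ is an open neighborhood of the decreasing intersection $K=\bigcap_j K_j$ of \emph{compact} sets. By a standard compactness argument in the proper space $X$, since each $K_j$ is compact and they are nested with intersection $K\subset V$, there must exist an index $j_0$ with $K_{j_0}\subset V$. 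Indeed, the sets $K_j\setminus V$ are compact and nested with empty intersection, so one of them (hence all beyond it) must be empty.

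Once such a $j_0$ is found, monotonicity in the first argument yields
\[
\rcapa_1(K_{j_0},D)\le \rcapa_1(V,D)<\rcapa_1(K,D)+\eps,
\]
and since the sequence is nonincreasing, $\lim_{j\to\infty}\rcapa_1(K_j,D)\le \rcapa_1(K_{j_0},D)<\rcapa_1(K,D)+\eps$. Letting $\eps\to 0$ gives $\lim_{j\to\infty}\rcapa_1(K_j,D)\le \rcapa_1(K,D)$, which together with the first inequality completes the proof.

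The main obstacle, and the only step requiring genuine care, is the compactness argument producing the index $j_0$ with $K_{j_0}\subset V$; everything else is a routine application of monotonicity and the already-established outer capacity property. The compactness step itself is standard once one observes that $X$ is proper (closed and bounded sets are compact, as recorded in the preliminaries), so that the nested compact sets $K_j\setminus V$ behave as expected. I would be mildly watchful that the hypothesis $K_j\subset\inte D$ is what lets us invoke Theorem \ref{thm:1-capacity is outer} for $K$ and keeps all the capacities finite and well-behaved, but no additional difficulty arises there.
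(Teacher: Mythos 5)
Your proposal is correct and takes essentially the same route as the paper: the paper's proof just says to follow the proof of \cite[Theorem 4.8]{BB-cap} verbatim, substituting Theorem \ref{thm:1-capacity is outer} for the $p>1$ outer capacity result, and that argument is exactly yours---monotonicity for the easy inequality, then outer regularity to produce an open $V$ with $K\subset V\subset D$ and $\rcapa_1(V,D)<\rcapa_1(K,D)+\eps$, followed by the nested-compact-sets argument giving $K_{j_0}\subset V$ for some $j_0$. (Your reduction to $V\subset\inte D$ is harmless but unnecessary, and the compactness step needs only that the $K_j$ are compact, not properness of $X$.)
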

\begin{proof}
Follow verbatim the proof of \cite[Theorem 4.8]{BB-cap}, except that instead of \cite[Theorem 4.1]{BB-cap},
refer to Theorem \ref{thm:1-capacity is outer}.
\end{proof}

\begin{example}
Let $X=\R$ (unweighted), let $\Om:=(0,2)$, and let $A_j:=(1/j,1)$, $j\in\N$.
Then it is easy to check that
\[
\rcapa_1(A_j,\Om)=2=\rcapa_{\BV}(A_j,\Om)
\]
for all $j\in\N$, but
\[
\rcapa_1\left(\bigcup_{j=1}^{\infty}A_j,\Om\right)=\infty=\rcapa_{\BV}\left(\bigcup_{j=1}^{\infty}A_j,\Om\right),
\]
since there are no admissible functions.
This shows that
\[
\rcapa_1\left(\bigcup_{j=1}^{\infty}A_j,\Om\right)\neq \sup_{\substack{K\textrm{ compact} \\K\subset \bigcup_{j=1}^{\infty}A_j}}
\rcapa_1(K,\Om)
\]
(and similarly for $\rcapa_{\BV}$).
Thus neither $\rcapa_1(\cdot,\Om)$ nor $\rcapa_{\BV}(\cdot,\Om)$ is a \emph{Choquet capacity}, see e.g.
\cite{BB-cap} for more discussion on Choquet capacities.
Note that by contrast, the $\BV$-capacity $\capa_{\BV}$ \emph{is} continuous with respect to increasing
sequences of sets, recall \eqref{eq:continuity of BVcap}, and a Choquet capacity,
see \cite[Corollary 3.8]{HaKi}.
\end{example}

As a small digression, following \cite{BB-cap}, let us define for bounded $D\subset X$ and $A\subset D$ 
\[
\widetilde{\rcapa}_1(A,D):=\inf_{\substack{V\textrm{ relatively open in }D\\ A\subset V\subset D}}\rcapa_1(V,D)
=\inf_{\substack{V\textrm{ open}\\ A\subset V}}\rcapa_1(V\cap D,D).
\]
By Theorem \ref{thm:1-capacity is outer}, clearly
\begin{equation}\label{eq:cap and tildecap coincide}
\rcapa_1(A,D)=\widetilde{\rcapa}_1(A,D)\quad\textrm{for any }
A\subset\inte D.
\end{equation}

\begin{proposition}\label{prop:cap and tildecap}
Let $A\subset D$ be bounded sets. Then $\rcapa_1(A,D)=\widetilde{\rcapa}_1(A,D)$ or
$\widetilde{\rcapa}_1(A,D)=\infty$.
\end{proposition}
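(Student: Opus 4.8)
The plan is to prove the dichotomy by showing that \emph{if} $\widetilde{\rcapa}_1(A,D)<\infty$ \emph{then} $\widetilde{\rcapa}_1(A,D)=\rcapa_1(A,D)$. The inequality $\rcapa_1(A,D)\le\widetilde{\rcapa}_1(A,D)$ holds in all cases: for every relatively open $V$ with $A\subset V\subset D$ we have $\rcapa_1(A,D)\le\rcapa_1(V,D)$ by monotonicity, and taking the infimum over such $V$ gives the claim. In particular the assumption $\widetilde{\rcapa}_1(A,D)<\infty$ forces $\rcapa_1(A,D)<\infty$, so the only thing left to establish is the reverse inequality $\widetilde{\rcapa}_1(A,D)\le\rcapa_1(A,D)$.

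First I would exploit the finiteness of $\widetilde{\rcapa}_1(A,D)$ to fix, once and for all, a relatively open set $V_0$ with $A\subset V_0\subset D$ together with an admissible function $\phi\in N_0^{1,1}(D)$ for it: $0\le\phi\le1$, $\phi=1$ on $V_0$, and $\int_X g_\phi\,d\mu<\infty$. Then, given $\eps\in(0,1)$, I would take a near-optimal test function $u\in N_0^{1,1}(D)$ for $\rcapa_1(A,D)$ with $0\le u\le1$, $u=1$ on $A$, and $\int_X g_u\,d\mu<\rcapa_1(A,D)+\eps$, and apply the quasicontinuity of Newton--Sobolev functions (Theorem \ref{thm:quasicontinuity}) exactly as in the proof of Theorem \ref{thm:1-capacity is outer}: for each $n\in\N$ there are an open set $G_n$ with $\capa_1(G_n)<1/n$ and an open set $W_n\subset X$ with $W_n\setminus G_n=\{u>1-\eps\}\setminus G_n$, so that $W_n\cup G_n$ is open. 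The candidate relatively open neighbourhood of $A$ is then
\[
V_n:=(W_n\cup G_n)\cap V_0,
\]
which is relatively open in $D$ and contains $A$, since $A\subset\{u=1\}\subset\{u>1-\eps\}\subset W_n\cup G_n$ and $A\subset V_0$.

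The point where the argument of Theorem \ref{thm:1-capacity is outer} breaks down is that $A$ may meet $\partial D$, so the correction absorbing the bad set $G_n$ can no longer be cut off near $\partial D$; this is where the fixed function $\phi$ enters, and it is the \emph{main obstacle}. Choosing $v_n\in N^{1,1}(X)$ with $0\le v_n\le1$, $v_n=1$ on $G_n$, and $\Vert v_n\Vert_{N^{1,1}(X)}<1/n$, I would set $w_n:=\min\{v_n,\phi\}$. Since $0\le w_n\le\phi$, we get $w_n\in N_0^{1,1}(D)$, and since $V_n\subset V_0=\{\phi=1\}$, on $V_n$ we have $w_n=v_n$; hence on $V_n$ the function $(u+w_n)/(1-\eps)$ equals $(u+v_n)/(1-\eps)\ge1$ (using $u>1-\eps$ on $W_n\setminus G_n$ and $v_n=1$ on $G_n$), so it is admissible for $\rcapa_1(V_n,D)$. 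The crux is to show $\int_X g_{w_n}\,d\mu\to0$. Using \cite[Corollary 2.20]{BB} to write $g_{w_n}=g_{v_n}\ch_{\{v_n\le\phi\}}+g_\phi\ch_{\{v_n>\phi\}}$ a.e., the first term is controlled by $\Vert v_n\Vert_{N^{1,1}(X)}<1/n$, while for the second I would pass to a subsequence so that $v_n\to0$ a.e., note that $g_\phi=0$ a.e.\ on $\{\phi=0\}$ (minimal upper gradients vanish a.e.\ on the level sets of a function), and invoke dominated convergence with dominating function $g_\phi\in L^1(X)$ to conclude $\int_{\{v_n>\phi\}}g_\phi\,d\mu\to0$.

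Finally I would combine these estimates. For each $n$,
\[
\widetilde{\rcapa}_1(A,D)\le\rcapa_1(V_n,D)\le\frac{1}{1-\eps}\left(\int_X g_u\,d\mu+\int_X g_{w_n}\,d\mu\right),
\]
and letting $n\to\infty$ and then $\eps\to0$ yields $\widetilde{\rcapa}_1(A,D)\le\rcapa_1(A,D)$, which completes the proof. I expect the verification that $\int_X g_{w_n}\,d\mu\to0$, i.e.\ that the energy of the boundary-adapted correction $\min\{v_n,\phi\}$ vanishes, to be the only genuinely delicate step; everything else is a bookkeeping adaptation of the proof of Theorem \ref{thm:1-capacity is outer}, with the intersection with $V_0$ serving precisely to place the correction inside the region where $\phi=1$.
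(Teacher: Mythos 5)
Your proof is correct, but it is not the route the paper takes. The paper's proof of Proposition \ref{prop:cap and tildecap} is a one-line delegation: follow the proof of \cite[Proposition 6.5]{BB-cap} verbatim, with the reference to \cite[Remark 6.4]{BB-cap} (the interior-case identity for $1<p<\infty$) replaced by \eqref{eq:cap and tildecap coincide}. That argument therefore leans on the already-established outer-capacity property for sets contained in $\inte D$ as a black box, and, as the remark following the proposition indicates, under the weaker hypotheses of \cite{BB-cap} it also invokes the zero $1$-weak upper gradient property. You instead re-run the quasicontinuity construction of Theorem \ref{thm:1-capacity is outer} directly: you correctly identify that the only obstruction when $A\not\subset\inte D$ is the unavailability of the Lipschitz cutoffs $\eta_j$, and you replace them by truncation with a fixed finite-energy admissible function $\phi$, whose existence is exactly the finiteness hypothesis $\widetilde{\rcapa}_1(A,D)<\infty$. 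The delicate step you flag is handled correctly: $\min\{v_n,\phi\}\in N_0^{1,1}(D)$ because it is squeezed between $0$ and $\phi$; the sets $V_n=(W_n\cup G_n)\cap V_0$ are relatively open in $D$ and contain $A$; and $\int_X g_{\min\{v_n,\phi\}}\,d\mu\to 0$ follows from $g_{\min\{v_n,\phi\}}=g_{v_n}\ch_{\{v_n\le\phi\}}+g_\phi\ch_{\{v_n>\phi\}}$ (\cite[Corollary 2.20]{BB}), the vanishing of $g_\phi$ a.e.\ on $\{\phi=0\}$, a.e.\ convergence $v_n\to 0$ along a subsequence, and dominated convergence with majorant $g_\phi\in L^1(X)$. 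As for what each approach buys: the paper's is maximally economical, reusing an existing argument wholesale; yours is self-contained within the paper's own toolkit, works verbatim for every $1\le p<\infty$ (with Minkowski's inequality replacing plain subadditivity of the energy), makes no explicit appeal to the zero weak upper gradient property, and in fact never uses the boundedness of $A$ or $D$, so it proves the dichotomy for arbitrary $A\subset D$ whenever $\widetilde{\rcapa}_1(A,D)$ is defined.
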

\begin{proof}
Follow verbatim the proof of \cite[Proposition 6.5]{BB-cap}, except that instead of \cite[Remark 6.4]{BB-cap},
refer to \eqref{eq:cap and tildecap coincide}.
\end{proof}

\begin{remark}
In Theorem \ref{thm:1-capacity is outer} for $p=1$,
Proposition \ref{prop:sequence of compact sets},
and Proposition \ref{prop:cap and tildecap}, our standing assumptions that
$X$ is complete, $\mu$ is doubling and
the space supports a $(1,1)$-Poincar\'e inequality can be weakened to the assumption
that all functions in $N^{1,1}(X)$ are quasicontinuous, and additionally that $X$ has the
\emph{zero $1$-weak upper gradient property}
in the case of Proposition \ref{prop:cap and tildecap}, see \cite{BB-cap}.
\end{remark}

For the variational Lipschitz $1$-capacity, we obviously have for any $A\subset D\subset X$ that
\[
\rcapa_{\mathrm{lip}}(A,D)=\inf_{\substack{V\textrm{ open} \\A\subset V\subset D}}\rcapa_{\mathrm{lip}}(V,D).
\]
(Of course, both sides may be $+\infty$.)
Next we show that also the variational $\BV$-capacity is an outer capacity,
in the same sense as the
variational (Newton-Sobolev) $1$-capacity.
The proof is almost the same, but instead of the quasicontinuity
of Newton-Sobolev functions we again rely on quasi-semicontinuity, this time of the lower representative $u^{\wedge}$.

\begin{theorem}\label{thm:BV capacity is outer}
Let $D\subset X$ and $A\subset \inte D$. Then
\[
\rcapa_{\BV}(A,D)=\inf_{\substack{V\textrm{ open} \\A\subset V\subset D}}\rcapa_{\BV}(V,D).
\]
\end{theorem}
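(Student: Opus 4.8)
The plan is to follow the proof of Theorem~\ref{thm:1-capacity is outer} almost verbatim, substituting the quasi-semicontinuity of the lower representative $u^{\wedge}$ (Proposition~\ref{prop:quasisemicontinuity}) for the quasicontinuity of Newton--Sobolev functions used there. The inequality $\rcapa_{\BV}(A,D)\le\inf_V\rcapa_{\BV}(V,D)$ is immediate from monotonicity, Proposition~\ref{prop:basic properties of the variational BV-capacity}(2), since every admissible $V$ contains $A$. For the reverse inequality I would assume $\rcapa_{\BV}(A,D)<\infty$, fix $\eps>0$, and pick a test function $u\in\BV_0(D)$ with $0\le u\le 1$, $u^{\wedge}=1$ $\mathcal H$-a.e.\ on $A$, and $\Vert Du\Vert(X)<\rcapa_{\BV}(A,D)+\eps$.

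Next I would set up the same exhaustion: $D_j:=\{x\in D:\dist(x,X\setminus D)>1/j\}$, $A_j:=A\cap D_j$, and the $j$-Lipschitz cutoffs $\eta_j:=(1-j\dist(\cdot,D_j))_+$ (which vanish outside $D$), so that $A=\bigcup_j A_j$ because $A\subset\inte D$. Applying Proposition~\ref{prop:quasisemicontinuity} gives open sets $G_j$ with $\capa_1(G_j)$ as small as desired and $u^{\wedge}|_{X\setminus G_j}$ lower semicontinuous; since $\{u^{\wedge}<1\}\cap A$ has $\mathcal H$-measure zero, hence $\capa_1$ zero by \eqref{eq:null sets of Hausdorff measure and capacity}, and $\capa_1$ is an outer capacity, I may arrange $\{u^{\wedge}<1\}\cap A\subset G_j$. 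Lower semicontinuity makes $\{u^{\wedge}>1-\eps\}\setminus G_j$ relatively open in $X\setminus G_j$, so $\{u^{\wedge}>1-\eps\}\cup G_j$ is open and $V_j:=(\{u^{\wedge}>1-\eps\}\cup G_j)\cap D_j$ is an open set containing $A_j$; then $V:=\bigcup_j V_j$ is open with $A\subset V\subset D$. Choosing $v_j\in N^{1,1}(X)$ with $0\le v_j\le 1$, $v_j=1$ on $G_j$ and $\Vert v_j\Vert_{N^{1,1}(X)}$ summably small, I set $w_j:=v_j\eta_j$ and $w:=\sup_j w_j$; by the Leibniz rule and Lemma~\ref{lem:sup is upper gradient}, $w\in N^{1,1}(X)$ with $\int_X g_w\,d\mu\le 2\eps$, and since $w=0$ on $X\setminus D$ we have $w\in N_0^{1,1}(D)\subset\BV_0(D)$ by \eqref{eq:Newtonian zero class contained in BV zero}.

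The step I expect to be the main obstacle is verifying that $(u+w)/(1-\eps)$ is admissible for $V$, i.e.\ that its lower representative is $\ge 1$ $\mathcal H$-a.e.\ on $V$, because admissibility here is an $\mathcal H$-a.e.\ condition on $u^{\wedge}$ rather than a pointwise one. The key observation is the superadditivity $(u+w)^{\wedge}\ge u^{\wedge}+w^{\wedge}$, which I would establish from the inclusion $\{u+w<s\}\subset\{u<t\}\cup\{w<s-t\}$ evaluated at a density point. On $V_j\setminus G_j\subset\{u^{\wedge}>1-\eps\}$ this already gives $(u+w)^{\wedge}\ge u^{\wedge}>1-\eps$ since $w\ge0$; on $G_j\cap D_j$ we have $w=1$ pointwise (as $v_j=\eta_j=1$ there), and because $w\in N^{1,1}(X)$, Proposition~\ref{prop:Lebesgue points for Sobolev functions} upgrades this to $w^{\wedge}=1$ $\mathcal H$-a.e., whence $(u+w)^{\wedge}\ge 1$ $\mathcal H$-a.e.\ there. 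Combining the two cases over all $j$ yields $(u+w)^{\wedge}\ge 1-\eps$ $\mathcal H$-a.e.\ on $V$, so $(u+w)/(1-\eps)\in\BV_0(D)$ is admissible for $V$.

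Finally I would estimate, using \eqref{eq:BV functions form vector space} and \eqref{eq:Sobolev subclass BV},
\[
\rcapa_{\BV}(V,D)\le\frac{\Vert D(u+w)\Vert(X)}{1-\eps}\le\frac{\Vert Du\Vert(X)+\int_X g_w\,d\mu}{1-\eps}\le\frac{\rcapa_{\BV}(A,D)+3\eps}{1-\eps}.
\]
Since $A\subset V\subset D$ with $V$ open, the infimum over such $V$ is bounded by the right-hand side; letting $\eps\to0$ gives $\inf_V\rcapa_{\BV}(V,D)\le\rcapa_{\BV}(A,D)$ and hence equality.
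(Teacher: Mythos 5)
Your proof is correct and follows essentially the same route as the paper's: exhaust $D$ by the sets $D_j$, use the quasi-semicontinuity of $u^{\wedge}$ to build the open sets $V_j$, and add the Newton--Sobolev correction $w=\sup_j v_j\eta_j$ borrowed from the proof of Theorem \ref{thm:1-capacity is outer}. The only cosmetic difference is at the admissibility step: where you invoke superadditivity of lower approximate limits and Proposition \ref{prop:Lebesgue points for Sobolev functions} to get $w^{\wedge}=1$ $\mathcal H$-a.e.\ on $G_j\cap D_j$, the paper gets $w_j^{\wedge}=1$ at every point there simply because $G_j\cap D_j$ is open and $w_j\equiv 1$ on it.
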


\begin{proof}
One inequality is clear.
To prove the opposite inequality,
we can assume that $\rcapa_{\BV}(A,D)<\infty$.
Fix $\eps>0$.
Take $u\in \BV_0(D)$ with $0\le u\le 1$, $u^{\wedge}=1$ on $A\setminus N$
for some $\mathcal H$-negligible set $N$,
and $\Vert Du\Vert(X)<\rcapa_{\BV}(A,D)+\eps$.
For each $j\in\N$, let
\[
D_j:=\{x\in D:\,\dist(x,X\setminus D)>1/j\}
\]
and $A_j:=A\cap D_j$.
Take $j$-Lipschitz functions $\eta_j:=(1-j\dist(\cdot,D_j))_+$,
so that $0\le \eta_j\le 1$ on $X$ and $\eta_j=1$ on $D_j$. Fix $j\in \N$.
By Proposition \ref{prop:quasisemicontinuity},
there exists an open set $G_j\subset X$ with $\capa_1(G_j)<2^{-j}\eps/j$ such that
$u^{\wedge}|_{X\setminus G_j}$ is lower semicontinuous.
We can assume that $N\subset G_j$
(recall that $\capa_1$ is an outer capacity).
Thus the set $\{u^{\wedge}>1-\eps\}\cup G_j$ is open,
and then so is
\[
V_j:=(\{u^{\wedge}>1-\eps\}\cup G_j)\cap D_j.
\]
Since
$u^{\wedge}(x)=1$ for every $x\in A_j\setminus G_j$,
we conclude that $A_j\subset V_j$.

Take $v_j\in N^{1,1}(X)$ with $0\le v_j\le 1$ on $X$, $v_j=1$ on $G_j$, and $\Vert v_j\Vert_{N^{1,1}(X)}<2^{-j}\eps/j$.
Let $w_j:=v_j \eta_j$.
Now $w_j^{\wedge}=1$ on $G_j\cap D_j$, and so $(u+w_j)^{\wedge}>1-\eps$ on $V_j$.
Let $w:=\sup_{j\in\N}w_j$.
Then $(u+w)^{\wedge}>1-\eps$ on $V:=\bigcup_{j=1}^{\infty}V_j$.
Note that $A=\bigcup_{j=1}^{\infty}A_j$, and so $A\subset V$.
The function $w$ is the same function as in the proof of Theorem \ref{thm:1-capacity is outer}
(for $p=1$),
and so $w\in N_0^{1,1}(D)\subset \BV_0(D)$, and by \eqref{eq:Sobolev subclass BV},
\[
\Vert Dw\Vert(X)\le\int_X g_{w}\,d\mu\le 2\eps.
\]
Hence $u+w\in \BV_0(D)$, and
\begin{align*}
\rcapa_{\BV}(V,D)
&\le \frac{1}{1-\eps}\Vert D(u+w)\Vert(X)\\
&\le \frac{1}{1-\eps}(\Vert Du\Vert(X)+\Vert Dw\Vert(X))\quad\textrm{by }\eqref{eq:BV functions form vector space}\\
&\le \frac{1}{1-\eps}(\Vert Du\Vert(X)+2\eps)\\
&\le\frac{1}{1-\eps}(\rcapa_{\BV}(A,D)+3\eps).
\end{align*}
Since $\eps>0$ was arbitrary, we have the result.
\end{proof}

For the $\BV$-capacity $\capa_{\BV}$, which we have essentially \emph{defined} as
an outer capacity, we can analogously (and much more easily) show the following;
see also \cite[Section 2]{CDLP}
(which uses \cite[Section 4]{FedZie}) for a corresponding result in the Euclidean setting.
\begin{proposition}\label{prop:BV-capacity outer capacity}
For any $A\subset X$,
\[
\capa_{\BV}(A)=\inf \Vert u\Vert_{\BV(X)},
\]
where the infimum is taken over all $u\in\BV(X)$ with $u^{\wedge}(x)\ge 1$ for
$\mathcal H$-a.e. $x\in A$.
\end{proposition}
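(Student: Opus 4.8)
The plan is to prove two inequalities. Denote by $\widehat C(A)$ the infimum on the right-hand side, taken over $u\in\BV(X)$ with $u^{\wedge}\ge 1$ $\mathcal H$-a.e.\ on $A$. The inequality $\widehat C(A)\le\capa_{\BV}(A)$ is immediate: if $u\ge 1$ $\mu$-a.e.\ in an open neighborhood $U$ of $A$, then for every $x\in A$ and every $t\le 1$ the set $\{u<t\}$ has $\mu$-measure zero in balls $B(x,r)\subset U$, so $u^{\wedge}(x)\ge 1$; thus every function admissible for $\capa_{\BV}(A)$ is admissible for $\widehat C(A)$. It therefore remains to show $\capa_{\BV}(A)\le\widehat C(A)$, and here I would argue exactly as in the proof of Theorem \ref{thm:BV capacity is outer}, but more simply, since now no zero boundary values and hence no exhaustion by sets $D_j$ with cutoffs $\eta_j$ are needed.

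For the reverse inequality I would fix an admissible $u$ for $\widehat C(A)$ and, by truncation (which neither increases $\Vert u\Vert_{\BV(X)}$ nor destroys the condition on $u^{\wedge}$), assume $0\le u\le 1$, with $u^{\wedge}=1$ on $A\setminus N$ for some $\mathcal H$-negligible set $N$. Fix $\eps>0$. By the quasi-semicontinuity Proposition \ref{prop:quasisemicontinuity} there is an open set $G\subset X$ with $\capa_1(G)<\eps$ such that $u^{\wedge}|_{X\setminus G}$ is lower semicontinuous; since $\mathcal H(N)=0$ gives $\capa_1(N)=0$ by \eqref{eq:null sets of Hausdorff measure and capacity} and $\capa_1$ is an outer capacity, I may enlarge $G$ so that also $N\subset G$. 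The lower semicontinuity makes $\{u^{\wedge}>1-\eps\}$ relatively open in $X\setminus G$, so $V:=\{u^{\wedge}>1-\eps\}\cup G$ is open, and $A\subset V$ because each $x\in A\setminus N$ has $u^{\wedge}(x)=1>1-\eps$ while $A\cap N\subset G$. Next I would take $w\in N^{1,1}(X)$ with $0\le w\le 1$, $w=1$ on $G$, and $\Vert w\Vert_{N^{1,1}(X)}$ small (possible by the definition of $\capa_1$, since $\capa_1(G)$ is small), so that $\Vert w\Vert_{\BV(X)}\le\Vert w\Vert_{N^{1,1}(X)}$ is small by \eqref{eq:Sobolev subclass BV}, and set $v:=(u+w)/(1-\eps)\in\BV(X)$.

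The crux is to check that $v\ge 1$ $\mu$-a.e.\ on the open neighborhood $V$, which is precisely what makes $v$ admissible for $\capa_{\BV}(A)$. On $G$ this is clear, since $w=1$ there forces $v\ge 1/(1-\eps)>1$. On $V\setminus G\subset\{u^{\wedge}>1-\eps\}$ I would invoke Lebesgue's differentiation theorem, which gives $u=u^{\wedge}$ $\mu$-a.e.; hence $u>1-\eps$ at $\mu$-a.e.\ point of $\{u^{\wedge}>1-\eps\}$, and since $w\ge 0$ we get $v\ge u/(1-\eps)>1$ $\mu$-a.e.\ on $V\setminus G$. This step --- converting the pointwise condition $u^{\wedge}>1-\eps$ into the genuine $\mu$-a.e.\ inequality $u>1-\eps$ required by the neighborhood condition --- is the main obstacle, and it is exactly why one must pass from the level $1$ to the level $1-\eps$. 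Finally the estimate
\[
\capa_{\BV}(A)\le\Vert v\Vert_{\BV(X)}\le\frac{1}{1-\eps}\bigl(\Vert u\Vert_{\BV(X)}+\Vert w\Vert_{\BV(X)}\bigr),
\]
together with $\Vert w\Vert_{\BV(X)}\to 0$ and $\eps\to 0$ and then taking the infimum over admissible $u$, yields $\capa_{\BV}(A)\le\widehat C(A)$.
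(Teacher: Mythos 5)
Your proof is correct and follows essentially the same route as the paper's: quasi-semicontinuity of $u^{\wedge}$ (Proposition \ref{prop:quasisemicontinuity}), enlarging the exceptional open set $G$ to contain $N$, adding a small-norm Newton--Sobolev function equal to $1$ on $G$, and passing to the level $1-\eps$ so that $\{u^{\wedge}>1-\eps\}\cup G$ is an open neighborhood of $A$. The only difference is that you spell out the truncation $0\le u\le 1$ and the use of Lebesgue's differentiation theorem to turn $u^{\wedge}>1-\eps$ into $u>1-\eps$ $\mu$-a.e., details the paper leaves implicit.
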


\begin{proof}
One inequality is clear. To prove the opposite inequality,
fix $A\subset X$
and denote the infimum on the right-hand side by $\beta$;
we can assume that $\beta<\infty$.
Fix $\eps>0$ and
take $u\in\BV(X)$ such that $u^{\wedge}(x)\ge 1$ for every $x\in A\setminus N$ for some
$\mathcal H$-negligible set $N$, and
$\Vert u\Vert_{\BV(X)}<\beta+\eps$.
By Proposition \ref{prop:quasisemicontinuity}, we find an open set $G\subset X$ such that
$\capa_1(G)<\eps$ and
$u^{\wedge}|_{X\setminus G}$ is lower semicontinuous, and we can assume that $N\subset G$.
Take $w\in N^{1,1}(X)$ such that $w\ge 1$ on $G$ and $\Vert w\Vert_{N^{1,1}(X)}<\eps$.
By \eqref{eq:Sobolev subclass BV}, $w\in \BV(X)$ with $\Vert w\Vert_{\BV(X)}<\eps$.
Now $u+w> 1-\eps$ on $\{u^{\wedge}>1-\eps\}\cup G$, which is an open set
containing $A$, and so
\begin{align*}
\capa_{\BV}(A)
\le \frac{\Vert u+w\Vert_{\BV(X)}}{1-\eps}
&\le \frac{\Vert u\Vert_{\BV(X)}+\Vert w\Vert_{\BV(X)}}{1-\eps}\\
&\le \frac{\Vert u\Vert_{\BV(X)}+\eps}{1-\eps}\le \frac{\beta+2\eps}{1-\eps}.
\end{align*}
Letting $\eps\to 0$, we get the result.
\end{proof}

Now we can prove Maz'ya-type inequalities for $\BV$ functions.
We adapt the proof of \cite[Theorem~5.53]{BB}, where such inequalities are given for
Newton-Sobolev functions (the inequalities were originally proven in the
Euclidean setting in \cite{Maz1}; see also \cite[Theorem~10.1.2]{Maz2}).
In the following, given a ball $B=B(x,r)$ and $\beta>0$, we use the abbreviation
$\beta B:=B(x,\beta r)$.
Moreover, recall the definition of the exponent $Q>1$ from \eqref{eq:homogenous dimension},
and the constants $C_P$ and $C_{SP}$ from the
Poincar\'e and Sobolev-Poincar\'e inequalities. 

\begin{theorem}\label{thm:mazya type inequality}
	Let $u\in\BV(X)$, let $S:= \{u^{\wedge}=u^{\vee}=0\}$, and let
	$B=B(x,r)$ for some $x\in X$ and $r>0$.
	Then we have
	\begin{equation}\label{eq:first mazya inequality}
	\left(\,\vint{2B}|u|^{Q/(Q-1)}\,d\mu\right)^{(Q-1)/Q}\le
	\frac{3(C_P+C_{SP})(r+1)}{\capa_{\BV}(B \cap S)}\Vert Du\Vert(4\lambda B)
	\end{equation}
	and
	\[
	\left(\,\vint{2B}|u|^{Q/(Q-1)}\,d\mu\right)^{(Q-1)/Q}\le
	\frac{3(C_P+C_{SP})}{\text{\rm cap}_{\BV}(B \cap S, 2B)}\Vert Du\Vert(4\lambda B),
	\]
	if the denominators are nonzero.
\end{theorem}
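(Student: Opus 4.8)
The plan is to adapt the Maz'ya scheme used for Newton--Sobolev functions in \cite[Theorem~5.53]{BB}: split $|u|$ on $2B$ into its oscillation, which is controlled by the Sobolev--Poincar\'e inequality \eqref{eq:sobolev poincare inequality}, and its mean value $u_{2B}:=\vint{2B}u\,d\mu$, which is controlled by the capacity of the zero set $B\cap S$. By Minkowski's inequality in $L^{Q/(Q-1)}(2B,\mu)$,
\[
\left(\,\vint{2B}|u|^{Q/(Q-1)}\,d\mu\right)^{(Q-1)/Q}
\le \left(\,\vint{2B}|u-u_{2B}|^{Q/(Q-1)}\,d\mu\right)^{(Q-1)/Q}+|u_{2B}|,
\]
and \eqref{eq:sobolev poincare inequality} bounds the first term by $2C_{SP}r\,\Vert Du\Vert(4\lambda B)/\mu(4\lambda B)$. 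Since $(-u)^{\wedge}=-u^{\vee}$ and $(-u)^{\vee}=-u^{\wedge}$, the set $S$ and the quantities $|u|$, $\Vert Du\Vert$ are unchanged under $u\mapsto -u$, so I may assume $u_{2B}\ge 0$; if $u_{2B}=0$ only the oscillation term survives and the conversion below finishes the proof, so I assume $u_{2B}>0$.

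To bound $u_{2B}$, I build an admissible test function out of $u$ itself. Set $w:=(u_{2B}-|u|)_+$ and let $\eta$ be a Lipschitz cutoff with $0\le\eta\le 1$, $\eta=1$ on $B$, $\eta=0$ on $X\setminus 2B$, and $g_{\eta}\le 1/r$; put $v:=\eta w/u_{2B}$, so that $0\le v\le 1$. On $S$ both approximate limits of $u$ vanish, hence $|u|$ has approximate limit $0$ and $w$ has approximate limit $u_{2B}$ there; together with $\eta=1$ on $B$ this gives $v^{\wedge}=1$ on $B\cap S$. Thus $v$ is admissible for $\capa_{\BV}(B\cap S)$ by Proposition \ref{prop:BV-capacity outer capacity}. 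Moreover $v\le\eta$ forces $v^{\vee}=0$ $\mathcal{H}$-a.e.\ on $X\setminus 2B$, so $v\in\BV_0(2B)$ and $v$ is also admissible for $\rcapa_{\BV}(B\cap S,2B)$.

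Next I estimate the norm of $v$. Since $t\mapsto(u_{2B}-|t|)_+$ is $1$-Lipschitz, the chain rule gives $\Vert Dw\Vert\le\Vert Du\Vert$; approximating $w$ by locally Lipschitz functions on an open ball containing $\overline{2B}$ inside $4\lambda B$ and applying the Leibniz rule for upper gradients \cite[Theorem~2.15]{BB} (which carries constant $1$) yields the clean product rule
\[
\Vert D(\eta w)\Vert(X)\le \Vert Dw\Vert(4\lambda B)+\int_{2B}w\,g_{\eta}\,d\mu
\le \Vert Du\Vert(4\lambda B)+\frac{1}{r}\int_{2B}w\,d\mu.
\]
Using $w\le|u-u_{2B}|$ and the $\BV$ version of the $(1,1)$-Poincar\'e inequality (with constant $C_P$) gives $\int_{2B}w\,d\mu\le 2C_P r\,\Vert Du\Vert(4\lambda B)$, whence after dividing by $u_{2B}$,
\[
\Vert Dv\Vert(X)\le\frac{2C_P+1}{u_{2B}}\Vert Du\Vert(4\lambda B),\qquad
\Vert v\Vert_{L^1(X)}\le\frac{1}{u_{2B}}\int_{2B}w\,d\mu\le\frac{2C_P r}{u_{2B}}\Vert Du\Vert(4\lambda B).
\]
For the variational capacity this already gives $u_{2B}\,\rcapa_{\BV}(B\cap S,2B)\le\Vert Dv\Vert(X)\le(2C_P+1)\Vert Du\Vert(4\lambda B)$, while for the full capacity $\capa_{\BV}(B\cap S)\le\Vert v\Vert_{\BV(X)}$ produces the factor $(r+1)$ from the $L^1$ term, giving $u_{2B}\,\capa_{\BV}(B\cap S)\le(2C_P+1)(r+1)\Vert Du\Vert(4\lambda B)$.

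Finally I convert the oscillation term into the same capacity form by bounding the capacity from above: testing with $\eta$ shows $\rcapa_{\BV}(B\cap S,2B)\le\Vert D\eta\Vert(X)\le\mu(4\lambda B)/r$ and $\capa_{\BV}(B\cap S)\le\Vert\eta\Vert_{\BV(X)}\le\tfrac{r+1}{r}\mu(4\lambda B)$, so that $2C_{SP}r/\mu(4\lambda B)$ is at most $2C_{SP}/\rcapa_{\BV}(B\cap S,2B)$, respectively $2C_{SP}(r+1)/\capa_{\BV}(B\cap S)$. Adding the oscillation and mean-value bounds and using $2C_P+2C_{SP}+1\le 3(C_P+C_{SP})$ (valid since $C_P,C_{SP}\ge 1$) yields both displayed inequalities. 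The main work, and the place where care is needed, is the construction of $v$: verifying the pointwise condition $v^{\wedge}=1$ on $B\cap S$ through the behaviour of the representatives on $S$, and extracting the sharp constant $3(C_P+C_{SP})$, which forces the use of the genuine Poincar\'e constant $C_P$ (not $C_{SP}$) for $\int_{2B}w\,d\mu$ together with a constant-one Leibniz rule for the product $\eta w$.
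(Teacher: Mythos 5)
Your proof is correct, but it takes a genuinely different route from the paper's. The paper bounds the $L^{Q/(Q-1)}$-average $a:=\bigl(\,\vint{2B}u^{Q/(Q-1)}\,d\mu\bigr)^{(Q-1)/Q}$ of a nonnegative $u$ in one stroke, by plugging $a$ into a single test function $v=\eta(1-u/a)$ and estimating $\capa_{\BV}(B\cap S)\le\Vert v\Vert_{\BV(X)}$; the Sobolev--Poincar\'e inequality enters only indirectly, through the term $|u_{2B}-a|\mu(2B)$ in the bound for $\int_{2B}|u-a|\,d\mu$, and signs are handled at the very end by splitting $u=u_+-u_-$ and using the coarea additivity $\Vert Du\Vert=\Vert Du_+\Vert+\Vert Du_-\Vert$. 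You instead split the $L^{Q/(Q-1)}$-norm by Minkowski into oscillation plus mean, bound the oscillation directly by \eqref{eq:sobolev poincare inequality}, bound $u_{2B}$ by capacity via the test function $\eta(u_{2B}-|u|)_+/u_{2B}$, and handle signs through $|u|$ and the symmetry $u\mapsto -u$. The price of your decomposition is an extra ingredient the paper never needs: to convert the oscillation term $2C_{SP}r\Vert Du\Vert(4\lambda B)/\mu(4\lambda B)$ into capacity form you must bound the capacity \emph{from above}, via $\capa_{\BV}(B\cap S)\le\Vert\eta\Vert_{\BV(X)}\le\tfrac{r+1}{r}\mu(4\lambda B)$ and $\rcapa_{\BV}(B\cap S,2B)\le\Vert D\eta\Vert(X)\le\mu(4\lambda B)/r$; this step is correct (the cutoff $\eta$ is admissible for both capacities, for the variational one because $\eta$ is continuous and vanishes on $X\setminus 2B$, so $\eta^{\wedge}=\eta^{\vee}=0$ there), but it is what makes your argument two-stage where the paper's is one-stage. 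What your route buys is a cleaner treatment of signs (no $u_\pm$ splitting, no coarea step) and a test function automatically trapped in $[0,1]$; both routes use Proposition \ref{prop:BV-capacity outer capacity} for admissibility, both use the constant-one Leibniz rule and the $\BV$ form of the $(1,1)$-Poincar\'e inequality, and both land on the same constant $3(C_P+C_{SP})$. Your Leibniz bound $\Vert D(\eta w)\Vert(X)\le\Vert Dw\Vert(4\lambda B)+\int_{2B}w\,g_{\eta}\,d\mu$ is in fact elementary here since $0\le\eta\le 1$ lets one discard $\eta$ in front of $g_{w_i}$ in the approximation, and the zero-extension of $\eta w$ is justified by Lemma \ref{lem:extension of compactly supported function}; alternatively you could simply cite \cite[Lemma 3.2]{HKLS} as the paper does.
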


\begin{proof}
	Let $q:=Q/(Q-1)$.
	First assume that $u$ is nonnegative.
	Let
	\[
	a:=\left(\,\vint{2B}u^q\,d\mu\right)^{1/q}.
	\]
	We can clearly assume that $a>0$.
	Take a $1/r$-Lipschitz function $0\le\eta\le 1$ with $\eta=1$ on $B$ and 
	$\eta=0$ on $X\setminus 2B$, and then let
	$v:=\eta(1-u/a)$. Now $v\in\BV(X)$
	(this actually follows from \eqref{eq:capacity estimate} below)
	with $v^{\wedge}\le 0$, $v^{\vee}\le 0$ on $X\setminus 2B$ 
	and $v^{\wedge}=v^{\vee}=1$ on $B\cap S$. By Proposition \ref{prop:BV-capacity outer capacity}
	and a suitable Leibniz rule, see \cite[Lemma 3.2]{HKLS}, we get
	\begin{equation}\label{eq:capacity estimate}
		\begin{split}
			\capa_{\BV}(B\cap S)
			&\le \int_X |v|\,d\mu+\Vert Dv\Vert(X) \\
			&\le \int_X |v|\,d\mu+\frac{1}{a}\left(\int_X \eta\,d\Vert Du\Vert+\int_{X}g_{\eta}|u-a|\,d\mu\right)\\
			&\le \frac{1}{a}\int_{2B}|u-a|\,d\mu+\frac{1}{a}\left(\Vert Du\Vert(2B)+\int_{2B}g_{\eta}|u-a|\,d\mu\right)\\
			&\le \frac{1+r^{-1}}{a}\int_{2B}|u-a|\,d\mu+\frac{1}{a}\Vert Du\Vert(2B).
		\end{split}
	\end{equation}
	To estimate the first term, we write
	\begin{equation}\label{eq:relative capacity, using triangle inequality}
	\begin{split}
	\int_{2B}|u-a|\,d\mu
	&\le \int_{2B}|u-u_{2B}|\,d\mu+|u_{2B}-a|\mu(2B)\\
	&\le 2C_Pr\Vert Du\Vert(2\lambda B)+|u_{2B}-a|\mu(2B)
	\end{split}
	\end{equation}
	by the Poincar\'e inequality.
	Here the second term can be estimated by
	\begin{align*}
		|a-u_{2B}|\,&\mu(2B)^{1/q}
		=|\Vert u\Vert_{L^q(2B)}-\Vert u_{2B}\Vert_{L^q(2B)}|\\
		&\qquad\qquad\le \Vert u-u_{2B}\Vert_{L^q(2B)}\\
		&\qquad\qquad= \left(\,\vint{2B}|u-u_{2B}|^q\,d\mu\right)^{1/q}\mu(2B)^{1/q}\\
		&\qquad\qquad\le 2C_{SP}r\frac{\Vert Du\Vert (4\lambda B)}{\mu(4\lambda B)}\mu(2B)^{1/q}
	\end{align*}
	by the Sobolev-Poincar\'e inequality \eqref{eq:sobolev poincare inequality}.
	Inserting this into \eqref{eq:relative capacity, using triangle inequality}, we get
	\[
	\int_{2B}|u-a|\,d\mu\le 2(C_P+C_{SP}) r\Vert Du\Vert (4\lambda B).
	\]
	Inserting this into \eqref{eq:capacity estimate}, we then get
	\[
	\capa_{\BV}(B\cap S)\le 3(C_P+C_{SP}) \frac{r+1}{a}\Vert Du\Vert(4\lambda B).
	\]
	Recalling the definition of $a$, this implies
	\[
	\left(\,\vint{2B}u^q\,d\mu\right)^{1/q}
	\le  3(C_P+C_{SP})\frac{r+1}{\capa_{\BV}(B\cap S)}\Vert Du\Vert(4\lambda B)
	\]
	provided that $\capa_{\BV}(B\cap S)>0$.
	Next we drop the nonnegativity assumption of $u\in\BV(X)$.
	We have $u=u_+-u_-$ with $u_+,u_-\in\BV(X)$.
	Letting $S_+:=\{u_+^{\wedge}=u_+^{\vee}=0\}$ and $S_-:=\{u_-^{\wedge}=u_-^{\vee}=0\}$,
	we clearly have $S\subset S_+$ and $S\subset S_-$, and so
	\begin{align*}
	&\left(\,\vint{2B}|u|^q\,d\mu\right)^{1/q}
	\le \left(\,\vint{2B}u_+^q\,d\mu\right)^{1/q}+\left(\,\vint{2B}u_-^q\,d\mu\right)^{1/q}\\
	&\le \frac{3(C_P+C_{SP})(r+1)}{\capa_{\BV}(B\cap S_+)}\Vert Du_+\Vert(4\lambda B)+
	\frac{3(C_P+C_{SP})(r+1)}{\capa_{\BV}(B\cap S_-)}\Vert Du_-\Vert(4\lambda B)\\
	&\le \frac{3(C_P+C_{SP})(r+1)}{\capa_{\BV}(B\cap S)}\Vert Du\Vert(4\lambda B)
	\end{align*}
	provided that $\capa_{\BV}(B\cap S)>0$,
	as by the coarea formula \eqref{eq:coarea} it is easy to check that
	$\Vert Du\Vert(4\lambda B)=\Vert Du_+\Vert(4\lambda B)+\Vert Du_-\Vert(4\lambda B)$.
	This completes the proof of the first inequality of the theorem.
	The second is proved similarly; we just need to drop
	the term $\int_X |v|\, d\mu$
	from \eqref{eq:capacity estimate} and proceed as above.
\end{proof}

Using the above Maz'ya-type inequalities, we can now show the following
Poincar\'e inequality for $\BV$ functions with zero boundary values.
The proof is again similar to
the one for Newton-Sobolev functions, see \cite[Corollary 5.54]{BB}.

\begin{corollary}\label{cor:poincare inequality for BV0}
	Let $D\subset X$ be a bounded set with $\capa_1(X\setminus D)>0$. Then
	there is $C_D=C_D(C_d,C_P,\lambda,D)>0$ such that for all $u\in \BV_0(D)$,
	\[
	\int_X |u|\,d\mu\le C_D \Vert Du\Vert(D).
	\]
\end{corollary}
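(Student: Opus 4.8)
The plan is to apply the Maz'ya-type inequality \eqref{eq:first mazya inequality} on a single ball large enough to swallow $D$, and to extract from the hypothesis $\capa_1(X\setminus D)>0$ a lower bound on the capacity in the denominator that is \emph{uniform} over all $u\in\BV_0(D)$. First I would fix a basepoint $x_0\in X$ and, using that $D$ is bounded, choose $R_0>0$ with $\overline D\subset B(x_0,R_0)$. For $u\in\BV_0(D)$ recall that $u=0$ $\mu$-a.e.\ on $X\setminus D$ (by Lebesgue's differentiation theorem, as noted after Definition \ref{def:BV functions with zero boundary values}) and that $\Vert Du\Vert(X\setminus D)=0$ by Proposition \ref{prop:variation measure concentration for BV0}. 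Hence, for any ball $B=B(x_0,r)$ with $2B\supset\overline D$, we have $\int_X|u|\,d\mu=\int_{2B}|u|\,d\mu$ and $\Vert Du\Vert(4\lambda B)\le\Vert Du\Vert(X)\le\Vert Du\Vert(D)$. Writing $S:=\{u^{\wedge}=u^{\vee}=0\}$, inequality \eqref{eq:first mazya inequality} then gives
\[
\left(\,\vint{2B}|u|^{Q/(Q-1)}\,d\mu\right)^{(Q-1)/Q}\le \frac{3(C_P+C_{SP})(r+1)}{\capa_{\BV}(B\cap S)}\,\Vert Du\Vert(D),
\]
so everything reduces to bounding $\capa_{\BV}(B\cap S)$ from below by a constant depending only on $D$ (and the structural constants), for a ball $B$ chosen once and for all.

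The main obstacle is precisely this uniform lower bound, since the set $S$ depends on $u$. Here is how I would obtain it. By \eqref{eq:Newtonian and BV capacities are comparable} and the hypothesis, $\capa_{\BV}(X\setminus D)\ge C^{-1}\capa_1(X\setminus D)>0$. Writing $X\setminus D=\bigcup_{j}\big((X\setminus D)\cap B(x_0,j)\big)$ as an increasing union and invoking the continuity of $\capa_{\BV}$ along increasing sequences \eqref{eq:continuity of BVcap}, I can fix an index $j_0$ for which $c_D:=\capa_{\BV}\big((X\setminus D)\cap B(x_0,j_0)\big)>0$. Now set $r:=\max\{R_0,j_0\}$ and $B:=B(x_0,r)$, so that $2B\supset\overline D$ and $B\supset B(x_0,j_0)$. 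For any $u\in\BV_0(D)$ the set $N:=(X\setminus D)\setminus S$ is $\mathcal H$-negligible by the definition of $\BV_0(D)$, whence $\capa_{\BV}(N)\le\capa_1(N)=0$ by \eqref{eq:null sets of Hausdorff measure and capacity} and \eqref{eq:Newtonian and BV capacities are comparable}. Since $B\cap S\supset\big((X\setminus D)\cap B(x_0,j_0)\big)\setminus N$, monotonicity together with the subadditivity of $\capa_{\BV}$ yields $\capa_{\BV}(B\cap S)\ge c_D>0$, uniformly in $u$.

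Finally I would combine the two estimates. By Jensen's inequality $\vint{2B}|u|\,d\mu\le\big(\vint{2B}|u|^{Q/(Q-1)}\,d\mu\big)^{(Q-1)/Q}$, so multiplying by $\mu(2B)$ and inserting the Maz'ya estimate gives
\[
\int_X|u|\,d\mu=\int_{2B}|u|\,d\mu\le \mu(2B)\,\frac{3(C_P+C_{SP})(r+1)}{c_D}\,\Vert Du\Vert(D).
\]
Since $r$, $c_D$, and $\mu(2B)<\infty$ depend only on $D$ and on $C_d,C_P,\lambda$, this is the asserted inequality with $C_D:=\mu(2B)\,3(C_P+C_{SP})(r+1)/c_D$. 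Beyond the uniform capacity bound, the only point needing care is the bookkeeping that guarantees $B$ is selected \emph{before} and independently of $u$; the continuity property \eqref{eq:continuity of BVcap} is exactly what makes such a choice possible, and it is where the boundedness of $D$ and the assumption $\capa_1(X\setminus D)>0$ enter decisively.
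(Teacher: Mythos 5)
Your proposal is correct and follows essentially the same route as the paper: a ball fixed in advance via the boundedness of $D$, a uniform positive lower bound on $\capa_{\BV}$ of a truncated piece of $X\setminus D$ obtained from \eqref{eq:Newtonian and BV capacities are comparable} and the continuity property \eqref{eq:continuity of BVcap}, transfer of that bound to $\capa_{\BV}(B\cap S)$ using that $(X\setminus D)\setminus S$ is $\mathcal H$-negligible, and then the Maz'ya inequality \eqref{eq:first mazya inequality} combined with H\"older/Jensen and Proposition \ref{prop:variation measure concentration for BV0}. The only cosmetic difference is bookkeeping: the paper enlarges the radius ``if necessary'' and works with $\capa_{\BV}(B(x,r)\setminus D)$ directly, whereas you fix $j_0$ and invoke subadditivity to discard the null set, which amounts to the same argument.
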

If $D$ is $\mu$-measurable, the integral on the left-hand side can be taken with respect to $D$.

\begin{proof}
	Since $D$ is bounded, we can take a ball $B(x,r)\supset D$.
	By \eqref{eq:Newtonian and BV capacities are comparable} we know that
	$\capa_{\BV}(X\setminus D)>0$, and by \eqref{eq:continuity of BVcap} we can conclude that
	$\capa_{\BV}(B(x,r)\setminus D)>0$ by making $r$ larger, if necessary.
	Take $u\in\BV_0(D)$.
	By \eqref{eq:null sets of Hausdorff measure and capacity} and
	\eqref{eq:Newtonian and BV capacities are comparable},
	\begin{equation}\label{eq:capacity of zero set and complement of D}
	\begin{split}
	\capa_{\BV}(B(x,r)\setminus D)
	&=\capa_{\BV}(B(x,r)\cap \{u^{\wedge}=u^{\vee}=0\}\setminus D)\\
	&\le \capa_{\BV}(B(x,r)\cap \{u^{\wedge}=u^{\vee}=0\}).
	\end{split}
	\end{equation}
	By H\"older's inequality and the Maz'ya-type inequality \eqref{eq:first mazya inequality},
	\begin{align*}
	\frac{1}{\mu(B(x,2r))}\int_X|u|\,d\mu
	&= \vint{B(x,2r)}|u|\,d\mu\\
	&\le \left(\,\vint{B(x,2r)}|u|^{Q/(Q-1)}\,d\mu\right)^{(Q-1)/Q}\\
	&\le \frac{3(C_P+C_{SP})(r+1)}{\capa_{\BV}(B(x,r)\cap \{u^{\wedge}=u^{\vee}=0\})}
	\Vert Du\Vert(B(x,4\lambda r))\\
	&\le \frac{3(C_P+C_{SP})(r+1)}{\capa_{\BV}(B(x,r)\setminus D)}
	\Vert Du\Vert(B(x,4\lambda r))\qquad\textrm{by }\eqref{eq:capacity of zero set and complement of D}\\
	&= \frac{3(C_P+C_{SP})(r+1)}{\capa_{\BV}(B(x,r)\setminus D)}
	\Vert Du\Vert(D)
	\end{align*}
	by Proposition \ref{prop:variation measure concentration for BV0}.
	Thus we can choose
	\[
	C_D=\frac{3(C_P+C_{SP})(r+1)\mu(B(x,2r))}{\capa_{\BV}(B(x,r)\setminus D)}.
	\]
\end{proof}

Now we can prove the following property of the variational $\BV$-capacity.
Combined with Proposition \ref{prop:basic properties of the variational BV-capacity}, this shows that
$\rcapa_{\BV}(\cdot,D)$ is an outer measure on the subsets of $D$.

\begin{proposition}
If $D\subset X$ is bounded and $A_1,A_2,\ldots\subset D$, then
\[
\rcapa_{\BV}\left(\bigcup_{j=1}^{\infty}A_j,D\right)\le \sum_{j=1}^{\infty}\rcapa_{\BV}(A_j,D).
\]
\end{proposition}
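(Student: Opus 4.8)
The plan is to mimic the standard proof of countable subadditivity of a capacity by passing to the pointwise supremum of admissible functions, with the main work being to show that this supremum again lies in $\BV_0(D)$. We may assume $\sum_{j=1}^{\infty}\rcapa_{\BV}(A_j,D)<\infty$, since otherwise there is nothing to prove. Fix $\eps>0$ and, for each $j$, choose an admissible function $u_j\in\BV_0(D)$ with $0\le u_j\le 1$ and $u_j^{\wedge}=1$ $\mathcal H$-a.e. on $A_j$, satisfying $\Vert Du_j\Vert(X)<\rcapa_{\BV}(A_j,D)+\eps 2^{-j}$. Set $v_1:=u_1$ and $v_n:=\max\{v_{n-1},u_n\}$ for $n\ge 2$, so that $v_n=\max\{u_1,\dots,u_n\}$. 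Since the upper and lower approximate limits of a finite maximum are the corresponding maxima of the approximate limits, each $v_n$ satisfies $v_n^{\wedge}=v_n^{\vee}=0$ $\mathcal H$-a.e. on $X\setminus D$, and by Lemma \ref{lem:BV functions form algebra} it has bounded variation; hence $v_n\in\BV_0(D)$.

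Next I would extract the crucial telescoping estimate. Writing $m_k:=\min\{v_{k-1},u_k\}$, Lemma \ref{lem:BV functions form algebra} gives $\Vert Dm_k\Vert(X)+\Vert Dv_k\Vert(X)\le \Vert Dv_{k-1}\Vert(X)+\Vert Du_k\Vert(X)$ for each $k\ge 2$. Summing over $k$ telescopes to
\[
\Vert Dv_n\Vert(X)+\sum_{k=2}^{n}\Vert Dm_k\Vert(X)\le \sum_{k=1}^{n}\Vert Du_k\Vert(X),
\]
so that both series $\sum_k\Vert Du_k\Vert(X)$ and $\sum_k\Vert Dm_k\Vert(X)$ converge and $\Vert Dv_n\Vert(X)\le \sum_{k=1}^{\infty}\Vert Du_k\Vert(X)$ for all $n$. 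The key point is that the increments satisfy $v_k-v_{k-1}=(u_k-v_{k-1})_+=u_k-m_k$ pointwise, whence by the subadditivity \eqref{eq:BV functions form vector space},
\[
\Vert D(v_m-v_n)\Vert(X)\le \sum_{k=n+1}^{m}\Vert D(u_k-m_k)\Vert(X)\le \sum_{k=n+1}^{\infty}\bigl(\Vert Du_k\Vert(X)+\Vert Dm_k\Vert(X)\bigr).
\]
Since both series converge, this tail tends to $0$ as $n\to\infty$ uniformly in $m$, i.e. $(v_n)$ is Cauchy in the total variation seminorm.

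To upgrade this to $\BV$-convergence I would use that $D$ is bounded. As $u_j\in\BV_0(D)$ forces $u_j=0$ $\mu$-a.e. on $X\setminus D$, we have $0\le v_n\le \chi_D$ with $\mu(D)<\infty$ (because $X$ is proper and $D$ is bounded); letting $v:=\sup_n v_n$, dominated convergence gives $v_n\to v$ in $L^1(X)$. Combined with the variation-Cauchy property, $(v_n)$ is Cauchy in $\BV(X)$ and therefore converges to $v$ in $\BV(X)$. Now Proposition \ref{prop:BV0 closed subspace} (closedness of $\BV_0(D)$) yields $v\in\BV_0(D)$, and since $v\ge u_j$ gives $v^{\wedge}\ge u_j^{\wedge}=1$ $\mathcal H$-a.e. on each $A_j$, the function $v$ is admissible for $\bigcup_j A_j$. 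Hence $\rcapa_{\BV}(\bigcup_j A_j,D)\le \Vert Dv\Vert(X)=\lim_n\Vert Dv_n\Vert(X)\le \sum_j\Vert Du_j\Vert(X)\le \sum_j\rcapa_{\BV}(A_j,D)+\eps$, and letting $\eps\to 0$ finishes the proof.

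The main obstacle is precisely the membership $v\in\BV_0(D)$: a pointwise supremum of functions in $\BV_0(D)$ need not inherit the zero-boundary-value condition through a naive argument on representatives, because a countable supremum can raise the upper approximate limit on a set of positive $\mathcal H$-measure. The telescoping identity $v_k-v_{k-1}=u_k-m_k$ together with the summability of $\sum_k\Vert Dm_k\Vert(X)$ circumvents this by delivering genuine $\BV$-norm convergence, after which the already-established closedness of $\BV_0(D)$ does the rest.
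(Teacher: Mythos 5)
Your proof is correct, but it takes a genuinely different route from the paper's. The paper splits into two cases according to whether $\capa_1(X\setminus D)$ vanishes: if it does, the candidate $\min\{1,\sum_j u_j\}$ works because the zero-boundary-value condition is then vacuous $\mathcal H$-a.e.\ on $X\setminus D$; if it does not, the paper invokes the Poincar\'e inequality for $\BV_0(D)$ (Corollary \ref{cor:poincare inequality for BV0}, which rests on the Maz'ya-type inequalities of Theorem \ref{thm:mazya type inequality}) to get $\sum_j\Vert u_j\Vert_{L^1(X)}<\infty$, so that $\sum_j u_j$ converges absolutely in the Banach space $\BV(X)$ and the closedness of $\BV_0(D)$ (Proposition \ref{prop:BV0 closed subspace}) applies. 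You instead take the increasing maxima $v_n=\max\{u_1,\dots,u_n\}$ and obtain $\BV$-norm convergence directly from the strong-subadditivity telescoping of Lemma \ref{lem:BV functions form algebra} combined with the identity $v_k-v_{k-1}=u_k-\min\{v_{k-1},u_k\}$ and \eqref{eq:BV functions form vector space}; the $L^1$ part is free since $0\le v_n\le\ch_D$ and $\mu(D)<\infty$ by boundedness of $D$. This yields a unified argument with no case distinction and no use of the Maz'ya/Poincar\'e machinery --- only Lemma \ref{lem:BV functions form algebra} and Proposition \ref{prop:BV0 closed subspace} --- which is a real simplification of the dependency structure; the paper's proof is shorter on the page only because Corollary \ref{cor:poincare inequality for BV0} had already been established for other purposes. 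One small imprecision to fix: the lower approximate limit of a maximum is in general only bounded below by the maximum of the lower approximate limits (equality can fail, e.g.\ for $\ch_E$ and $\ch_{X\setminus E}$ at a point of $\partial^*E$), so to see $v_n\in\BV_0(D)$ you should argue via the sandwich $0\le v_n^{\wedge}\le v_n^{\vee}=\max_{j\le n}u_j^{\vee}=0$ $\mathcal H$-a.e.\ on $X\setminus D$, using that the identity for upper approximate limits of maxima does hold; this is exactly what your argument delivers once stated this way.
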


\begin{proof}
We can assume that the right-hand side is finite. Fix $\eps>0$.
For each $j\in\N$, choose $u_j\in\BV_0(D)$
such that $0\le u_j\le 1$, $u_j^{\wedge}=1$ on $A_j$, and
\[
\Vert Du_j\Vert(X)\le \rcapa_{\BV}(A_j,D)+2^{-j}\eps.
\]
Consider first the case $\capa_1(X\setminus D)=0$. Let
$u:=\min\left\{1,\sum_{j=1}^{\infty}u_j\right\}$, so that $u^{\wedge}= 1$ on $\bigcup_{j=1}^{\infty}A_j$.
By Lebesgue's dominated convergence theorem,
$\min\left\{1,\sum_{j=1}^{N}u_j\right\}\to u$ in $L^1(X)$
as $N\to\infty$. Thus by lower semicontinuity of the total variation with respect to
$L^1$-convergence, we get
\begin{align*}
\Vert Du\Vert(X)
&\le \liminf_{N\to\infty}\Vert D\left(\min\left\{1,\sum_{j=1}^{N}u_j\right\}\right)\Vert(X)\\
&\le \liminf_{N\to\infty}\Vert D\left(\sum_{j=1}^{N}u_j\right)\Vert(X)\\
&\le \sum_{j=1}^{\infty}\Vert Du_j\Vert(X)\quad\textrm{by }\eqref{eq:BV functions form vector space}\\
&\le \sum_{j=1}^{\infty}\rcapa_{\BV}(A_j,D)+\eps.
\end{align*}
Thus $u\in\BV(X)$ and then obviously $u\in\BV_0(D)$, since $\capa_1(X\setminus D)=0$.
Thus we get
\[
\rcapa_{\BV}\left(\bigcup_{j=1}^{\infty}A_j,D\right)\le \Vert Du\Vert(X)
\le \sum_{j=1}^{\infty}\rcapa_{\BV}(A_j,D)+\eps.
\]
Letting $\eps\to 0$, we obtain the result.

Then consider the case $\capa_1(X\setminus D)>0$.
By Corollary \ref{cor:poincare inequality for BV0}, there is a constant
$C_D>0$ such that $\Vert u_j\Vert_{L^1(X)}\le C_D \Vert Du_j\Vert(X)$ for all $j\in\N$.
Thus $\sum_{j=1}^{\infty}\Vert u_j\Vert_{\BV(X)}<\infty$, so that defining
$u:=\sum_{j=1}^{\infty}u_j$, by Proposition \ref{prop:BV0 closed subspace} we have that
$u\in\BV_0(D)$. Clearly $u^{\wedge}\ge 1$ on $\bigcup_{j=1}^{\infty}A_j$.
Thus
\[
\rcapa_{\BV}\left(\bigcup_{j=1}^{\infty}A_j,D\right)\le
\Vert Du\Vert(X)\le \sum_{j=1}^{\infty}\Vert Du_j\Vert(X)
\le \sum_{j=1}^{\infty}\rcapa_{\BV}(A_j,D)+\eps.
\]
Again letting $\eps\to 0$, we obtain the result.
\end{proof}

The result given in the following theorem is perhaps unexpected, since the class of
admissible test functions for
$\rcapa_{\BV}$ is so much larger than the class of admissible test functions for
$\rcapa_{\mathrm{lip}}$.
Previously, a similar result was given in \cite[Theorem 4.3]{HaSh}, but there the
variational $\BV$-capacity
$\rcapa_{\BV}(K,\Om)$ was defined by requiring
the test functions to be compactly supported in $\Om$ and to
take the value $1$ in a \emph{neighborhood} of $K$.
We need to obtain these two properties by using our previous results, but after that
we employ similar methods as in \cite{HaSh}.

\begin{theorem}\label{thm:BV and lip caps are equal}
Let $\Om\subset X$ be open and let $K\subset \Om$  be compact. Then
\[
\rcapa_{\mathrm{lip}}(K,\Om)=\rcapa_{\BV}(K,\Om).
\]
\end{theorem}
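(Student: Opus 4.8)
The inequality $\rcapa_{\BV}(K,\Om)\le\rcapa_{\mathrm{lip}}(K,\Om)$ is immediate, since $\rcapa_{\BV}(K,\Om)\le\rcapa_1(K,\Om)\le\rcapa_{\mathrm{lip}}(K,\Om)$ by the comparisons established just after the definitions. Hence the whole content is the reverse inequality $\rcapa_{\mathrm{lip}}(K,\Om)\le\rcapa_{\BV}(K,\Om)$, and the plan is to manufacture, out of an arbitrary $\BV_0(\Om)$ competitor, a sequence of admissible Lipschitz competitors whose energies are asymptotically no larger. Fix $\eps>0$ and take $u\in\BV_0(\Om)$ with $0\le u\le 1$, $u^{\wedge}=1$ $\mathcal H$-a.e. on $K$, and $\Vert Du\Vert(X)<\rcapa_{\BV}(K,\Om)+\eps$.

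First I would upgrade the weak condition ``$u^{\wedge}=1$ $\mathcal H$-a.e. on $K$'' to ``the competitor equals $1$ $\mu$-a.e. on an open neighbourhood of $K$'', repeating the construction in the proof of Theorem \ref{thm:BV capacity is outer}. Using the quasi-semicontinuity of $u^{\wedge}$ (Proposition \ref{prop:quasisemicontinuity}) I obtain an open set $G$ with $\capa_1(G)$ as small as I like, containing the $\mathcal H$-null exceptional set, such that $u^{\wedge}|_{X\setminus G}$ is lower semicontinuous; then $\{u^{\wedge}>1-\eps\}\cup G$ is open and contains $K$. Adding a small $w\in N_0^{1,1}(\Om)$ with $0\le w\le 1$ and $w=1$ on $G$ near $K$ (obtained by multiplying a function testing $\capa_1(G)$ by a Lipschitz cutoff supported in $\Om$, exactly as the factor $\eta_j$ in Theorem \ref{thm:BV capacity is outer}), I arrange $(u+w)^{\wedge}>1-\eps$ on an open set $V\supset K$ with $\overline V$ a compact subset of $\Om$. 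Then
\[
\hat u:=\min\Bigl\{\tfrac{1}{1-\eps}(u+w),\,1\Bigr\}
\]
lies in $\BV_0(\Om)$, satisfies $0\le\hat u\le 1$ and $\hat u=1$ $\mu$-a.e. on $V$ (because $\hat u^{\wedge}=1$ everywhere on $V$, whence the claim follows from Lebesgue differentiation), and by Lemma \ref{lem:BV functions form algebra} and \eqref{eq:BV functions form vector space} has $\Vert D\hat u\Vert(X)\le\frac{1}{1-\eps}(\rcapa_{\BV}(K,\Om)+2\eps)$, once $G$ is chosen so small that $\Vert Dw\Vert(X)<\eps$.

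Next I would pass to Lipschitz functions. By Proposition \ref{prop:weak density of lipschitz functions} applied to $\hat u$ there are $v_i\in\Lip_c(\Om)$, which I may truncate so that $0\le v_i\le 1$, with $v_i\to\hat u$ in $L^1(X)$ and $\int_X g_{v_i}\,d\mu\to\Vert D\hat u\Vert(X)$; passing to a subsequence, $v_i\to\hat u$ a.e., hence $v_i\to 1$ a.e. on $V$. These $v_i$ need not satisfy $v_i\ge 1$ on $K$, so I glue in a fixed Lipschitz cutoff $\eta$ with $0\le\eta\le 1$, $\eta=1$ on a neighbourhood of $K$ and $\supp\eta$ a compact subset of $V$, and set $\psi_i:=\max\{v_i,\eta\}$. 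Each $\psi_i$ is Lipschitz with compact support in $\Om$ and equals $1$ on $K$, hence is admissible for $\rcapa_{\mathrm{lip}}(K,\Om)$. By the upper-gradient formula \cite[Corollary 2.20]{BB},
\[
\int_X g_{\psi_i}\,d\mu\le\int_X g_{v_i}\,d\mu+\int_{\{v_i\le\eta\}}g_{\eta}\,d\mu,
\]
and the last term tends to $0$: its integrand is supported in the bounded set $\{g_{\eta}\neq 0\}\subset V$, is dominated by $\Lip(\eta)$, and converges to $0$ a.e., since on $\{\eta<1\}$ we eventually have $v_i>\eta$ (as $v_i\to 1$), while on $\{\eta=1\}$ the minimal upper gradient $g_{\eta}$ vanishes a.e. by locality. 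Dominated convergence then gives $\limsup_i\int_X g_{\psi_i}\,d\mu\le\Vert D\hat u\Vert(X)$, whence $\rcapa_{\mathrm{lip}}(K,\Om)\le\Vert D\hat u\Vert(X)\le\frac{1}{1-\eps}(\rcapa_{\BV}(K,\Om)+2\eps)$. Letting $\eps\to 0$ finishes the proof.

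I expect the main obstacle to be this final gluing step: the cutoff $\eta$ needed to force the value $1$ on $K$ threatens to contribute its own non-negligible energy, and controlling it requires both that $\hat u$ genuinely equals $1$ $\mu$-a.e. on the neighbourhood $V$ carrying $g_{\eta}$ (so that $v_i\to 1$ there) and the locality fact that $g_{\eta}=0$ a.e. on the level set $\{\eta=1\}$. Producing the full $\mu$-a.e. neighbourhood $V$ out of the mere $\mathcal H$-a.e. condition on $K$, via quasi-semicontinuity and the small $N_0^{1,1}$ correction $w$, is the other delicate point; everything else is routine truncation and subadditivity.
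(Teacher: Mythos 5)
Your proof is correct, and its overall architecture is the same as the paper's: upgrade the $\BV$ competitor to one equal to $1$ $\mu$-a.e.\ on an open neighbourhood $V$ of $K$, approximate it by compactly supported Lipschitz functions via Proposition \ref{prop:weak density of lipschitz functions}, and restore admissibility by gluing in a cutoff $\eta$ that is $1$ on $K$ and supported in $V$, checking that the gluing cost vanishes. The differences are in execution. For the first step, the paper simply cites Theorem \ref{thm:BV capacity is outer} to get $V$ with $\rcapa_{\BV}(V,\Om)<\rcapa_{\BV}(K,\Om)+\eps$ and takes a near-optimal competitor for $V$ (which equals $1$ a.e.\ on $V$ by Lebesgue differentiation), whereas you re-run that theorem's construction and add the truncation $\hat u=\min\{(u+w)/(1-\eps),1\}$; this is equivalent, only slightly more laborious and with an extra harmless factor $1/(1-\eps)$. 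For the gluing, the paper uses the convex combination $v_i:=\eta+(1-\eta)u_i$ and the Leibniz rule \cite[Lemma 2.18]{BB}, so that the error term is
\[
\sup_X g_\eta\int_V|1-u_i|\,d\mu=\sup_X g_\eta\int_V|u-u_i|\,d\mu\to 0,
\]
which needs nothing beyond $L^1$-convergence; your gluing $\psi_i:=\max\{v_i,\eta\}$ uses instead the pasting formula \cite[Corollary 2.20]{BB}, and to kill the error term $\int_{\{v_i\le\eta\}}g_\eta\,d\mu$ you must pass to an a.e.-convergent subsequence, invoke the locality fact that $g_\eta=0$ a.e.\ on $\{\eta=1\}$, and apply dominated convergence. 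Both gluings are valid; the paper's is a little lighter (no subsequence, no locality of minimal upper gradients), while yours buys competitors that automatically stay in $[0,1]$.
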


\begin{proof}
One inequality is clear. To prove the opposite inequality, we can assume that $\rcapa_{\BV}(K,\Om)<\infty$.
Fix $\eps>0$.
By Theorem \ref{thm:BV capacity is outer}, we find
an open set $V$ such that $K\subset V\subset \Om$ and
$\rcapa_{\BV}(V,\Om)< \rcapa_{\BV}(K,\Om)+\eps$.
Then we find $u\in \BV_0(\Om)$ such that $0\le u\le 1$,
$u^{\wedge}=1$ on $V$, and $\Vert D u\Vert(X)<\rcapa_{\BV}(K,\Om)+\eps$.
By Proposition \ref{prop:weak density of lipschitz functions}, we find functions
$u_i\in\Lip_c(\Om)$ such that
\begin{equation}\label{eq:properties of wi}
\Vert u_i-u\Vert_{L^1(X)}<1/i\quad \textrm{and}\quad \int_X g_{u_i}\,d\mu<\Vert Du\Vert(X)+1/i,
\quad i\in\N.
\end{equation}
Take $\eta\in \Lip_c(V)$ with $0\le \eta\le 1$ on $X$ and $\eta=1$ on $K$.
Then let
\[
v_i:=\eta+(1-\eta)u_i,\quad i\in\N.
\]
We have $v_i\in\Lip_c(\Om)$ and $v_i=1$ on $K$,
so that each $v_i$ is admissible for $\rcapa_{\mathrm{lip}}(K,\Om)$.
By a Leibniz rule, see \cite[Lemma 2.18]{BB}, we have
\[
g_{v_i}\le (1-\eta)g_{u_i}+|1-u_i|g_{\eta},
\]
and so by \eqref{eq:properties of wi},
\begin{align*}
\int_X g_{v_i}\,d\mu
&\le \int_X g_{u_i}\,d\mu+\sup_X g_{\eta}\int_V |1-u_i|\,d\mu\\
&\le \Vert Du\Vert(X)+1/i+\sup_X g_{\eta}\int_V |1-u_i|\,d\mu\\
&= \Vert Du\Vert(X)+1/i+\sup_X g_{\eta}\int_V |u-u_i|\,d\mu\\
&\to \Vert Du\Vert(X)\quad\textrm{as }i\to\infty.
\end{align*}
Thus for some sufficiently large index $i\in\N$, we have
\[
\rcapa_{\mathrm{lip}}(K,\Om)\le \int_X g_{v_i}\,d\mu
\le \Vert Du\Vert(X)+\eps
<\rcapa_{\BV}(K,\Om)+2\eps.
\]
Letting $\eps\to 0$, we conclude the proof.
\end{proof}

\begin{example}\label{ex:comparison of capacities}
Let $X=\R$ (unweighted) and choose
\[
K=[-2,-1]\cup [1,2]
\quad\textrm{and}\quad
\Om=(-3,0)\cup(0,3).
\]
Then it is straightforward to show that
\[
\rcapa_{\mathrm{lip}}(K,\Om)=\rcapa_1(K,\Om)=\rcapa_{\BV}(K,\Om)=4.
\] 
On the other hand, if we defined $\rcapa_{\BV}(K,\Om)$ by only requiring the test
functions to satisfy $u=0$
on $\R\setminus \Om$ (almost everywhere), then
we would have $\rcapa_{\BV}(K,\Om)=2$.
In this sense, our current definition of $\rcapa_{\BV}$ can be considered to be the natural one.

Moreover, if we defined $\rcapa_{\BV}(K,\Om)$ by requiring the test functions to satisfy
$u^{\vee}\ge 1$ on $K$ instead of $u^{\wedge}\ge 1$, we would get a generally smaller
but comparable quantity, see \cite[Example 4.4, Theorem 4.5, Theorem 4.6]{HaSh}.
\end{example}

More generally, consider $\rcapa_{\mathrm{lip}}(A,D)$ for $A\subset D\subset X$.
Recall from Example \ref{ex:first comparisons of capacities} that it is possible to have
$\rcapa_1(A,D)=0$ but $\rcapa_{\mathrm{lip}}(A,D)=\infty$.
According to \cite[Example 6.1]{BB-cap}, when $1<p<\infty$ it is also possible to have
\[
0<\rcapa_p(K,D)<\rcapa_{\mathrm{lip},p}(K,D)<\infty
\]
for a compact set $K\subset\inte D$, where $\rcapa_{\mathrm{lip},p}$ is defined
by requiring the test functions in the definition of $\rcapa_p$  to be Lipschitz.

\begin{openproblem}
Do we have
$\rcapa_1(K,D)=\rcapa_{\mathrm{lip}}(K,D)$
for every $D\subset X$ and compact $K\subset \inte D$?
\end{openproblem}

The following example shows that $\rcapa_{\BV}(K,D)$ and $\rcapa_1(K,D)$ can differ
even for a compact $K\subset \inte D$.

\begin{example}\label{ex:weak density fails}
Let $X=\R$, define a weight function $w:=1+\ch_{[0,3]}$,
and let $d\mu:=w\,d\mathcal L^1$.
Choose $D=[0,3]$, $K=[1,2]$, and $u:=\ch_{[0,3]}$, so that $u\in\BV_0(D)$. It is easy to check
that $\Vert Du\Vert(X)=2$, and so $\rcapa_{\BV}(K,D)\le 2$. On the other hand, clearly
\[
\liminf_{i\to\infty}\int_{\R}g_{u_i}\,d\mu=2\liminf_{i\to\infty}\int_{\R}g_{u_i}\,d\mathcal L^1
\ge 4
\]
for every sequence of functions $(u_i)\subset\liploc(X)$ with $u_i\to u$ in $L^1(X)$
and $\supp u_i \subset D$.
Thus Proposition \ref{prop:weak density of lipschitz functions} can fail if $\Om$ is not open.

Similarly, $\int_{\R}g_v\,d\mu\ge 4$ for every $v\in N^{1,1}_0(D)$ with $v=1$ on $K$. Thus
$\rcapa_1(K,D)\ge 4$ (in fact, equality holds).
Thus $\rcapa_{\BV}(K,D)<\rcapa_1(K,D)$.
\end{example}

\paragraph{Acknowledgments.}The research was
funded by a grant from the Finnish Cultural Foundation.
Most of the research for this paper was conducted during the author's visit to the University of Cincinnati,
whose hospitality the author wishes to acknowledge.
The author also wishes to thank Nageswari Shanmugalingam for helping to derive Maz'ya-type inequalities
for $\BV$ functions, and
Anders and Jana Bj\"orn for discussions on variational $p$-capacities.

\noindent Address:\\

\noindent Department of Mathematics\\
Link\"oping University\\
SE-581 83 Link\"oping, Sweden\\
E-mail: {\tt panu.lahti@aalto.fi}

\end{document}